\def \dsp {\displaystyle}
\newcommand{\N}{\mathbb{N}}
\newcommand{\C}{\mathbb{C}}
\newcommand{\D}{\mathbb{D}}
\newcommand{\R}{\mathbb{R}}
\newcommand{\T}{\mathbb{T}}
\renewcommand{\Re}{\mathop{\rm Re}}
\renewcommand{\Im}{\mathop{\rm Im}}
\newcommand{\const}{\mathop{\rm const}}
\newcommand{\spn}{\mathop{\rm span}}
\newtheorem{theo}{Theorem}[section]
\newtheorem{lema}[theo]{Lemma}
\newtheorem{coro}[theo]{Corollary}
\newtheorem{prop}[theo]{Proposition}
\theoremstyle{definition}
\newtheorem{definition}{Definition}[section]
\newtheorem{remark}{Remark}[section]
\newtheorem{example}{Example}[section]
\DeclareMathOperator{\supp}{supp}
\def \dsp {\displaystyle}
\def\downbar#1{
\setbox10=\hbox{$#1$}
            \dimen10=\ht10 \advance\dimen10 by 2.5pt
            \ifdim \dimen10<15pt %equals approximately 0.5cm
               \advance\dimen10 by -0.5pt
               \dimen11=\dimen10
               \advance\dimen10 by 2.5pt
               \lower \dimen11
            \else \lower \ht10 \fi
            \hbox {\hskip 1.5pt \vrule height \dimen10 depth \dp10}}
\def\upbar#1{
\setbox10=\hbox{$#1$}
            \dimen10=\ht10 \advance\dimen10 by \dp10 \advance\dimen10 by 2.5pt
            \ifdim \dimen10<15pt %equals approximately 0.5cm
               \advance\dimen10 by 2pt \fi
            \raise 2.5pt \hbox {\hskip -1.5pt \vrule height \dimen10}}
\begin{document}

\title{Christoffel formula for kernel polynomials on the unit circle}
\author
{
 {C.F. Bracciali$^{a}$, A. Mart\'{i}nez-Finkelshtein$^{b}$, A. Sri Ranga$^{a}$\thanks{ranga@ibilce.unesp.br (corresponding author)}  and D.O. Veronese$^{c}$} \\[1ex]
  {\small $^{a}$DMAp, IBILCE, UNESP - Universidade Estadual Paulista,} \\
  {\small 15054-000, S\~{a}o Jos\'{e} do Rio Preto, SP, Brazil.}\\[1ex]
  {\small $^{a}$Departamento de Matem\'{a}ticas, Universidad de Almer\'{i}a, 04120 Almer\'{i}a,}\\
{\small  and Instituto Carlos I de F\'{\i}sica Te\'orica and Computacional, Granada University, Spain} \\[1ex]
  {\small $^{c}$ICTE, UFTM - Universidade Federal do Tri\^{a}ngulo Mineiro,} \\
  {\small 38064-200 Uberaba, MG, Brazil.}\\[1ex]
}

%\date{}

\maketitle

\thispagestyle{empty}

\begin{abstract}
  Given a nontrivial positive measure $\mu$ on the unit circle, the associated Christoffel-Darboux kernels  are  $K_n(z, w;\mu) = \sum_{k=0}^{n}\overline{\varphi_{k}(w;\mu)}\,\varphi_{k}(z;\mu)$, $n \geq 0$, where $\varphi_{k}(\cdot; \mu)$ are the orthonormal polynomials with respect to the measure  $\mu$. Let the positive measure $\nu$ on the unit circle be given by $d \nu(z) = |G_{2m}(z)|\, d \mu(z)$, where $G_{2m}$ is a conjugate reciprocal polynomial of exact degree $2m$. We establish a determinantal formula expressing $\{K_n(z,w;\nu)\}_{n \geq 0}$ directly in terms of  $\{K_n(z,w;\mu)\}_{n \geq 0}$. 
  
  Furthermore, we consider the special case of $w=1$; it is known that appropriately normalized polynomials $K_n(z,1;\mu) $ satisfy a recurrence relation whose coefficients are given in terms of two sets of real parameters  $ \{c_n(\mu)\}_{n=1}^{\infty}$ and $ \{g_{n}(\mu)\}_{n=1}^{\infty}$, with $0<g_n<1 $ for $n\geq 1$. The double sequence $\{(c_n(\mu), g_{n}(\mu))\}_{n=1}^{\infty}$ characterizes the measure $\mu$. A natural question about the relation between the parameters $c_n(\mu)$, $g_n(\mu)$, associated with $\mu$, and the sequences $c_n(\nu)$,  $g_n(\nu)$, corresponding to $\nu$, is also addressed.
  
%   the relations between the coefficients in the  three term  formulas satisfied by the polynomials $\{K_n(z,1;\nu)\}_{n \geq 0}$ and  $\{K_n(z,1;\mu)\}_{n \geq 0}$ are also obtained. Examples are given to justify the results.  
  
  Finally, examples are considered, such as  the Geronimus weight (a measure supported on an arc of $\T$), a class of measures given by basic hypergeometric functions,  and a class of measures with hypergeometric orthogonal polynomials.
\end{abstract}

{\noindent}Keywords: Orthogonal functions, Christoffel formulas, three term recurrence relation,   orthogonal polynomials on the unit circle.

%

%%%%%%%%%%%%%%%%%%%%%%%%%%%%%%%%%%%%%%%%%%%%%%%%%%%%%%%%%%%%%%%%%%%%%%%%%%%%%%
\section{Introduction }\label{sec:intro}
\setcounter{equation}{0}

Given a nontrivial positive measure $\mu$ on the unit circle $\T: = \{\zeta=e^{i\theta}\!\!: \, 0 \leq \theta \leq 2\pi \}$ the associated orthonormal polynomials $\varphi_n(z; \mu)=\kappa_n z^n + \text{lower degree terms}$ are defined by  $\kappa_n = \kappa_{n}(\mu) >0$ and 
\begin{equation*} %\label{Eq-ONPUC-Definition}
    \int_{\T}  \overline{\varphi_{m}(\zeta; \mu)}\, \varphi_{n}(\zeta; \mu)\, d \mu(\zeta) = \int_{0}^{2\pi} \overline{\varphi_{m}(e^{i\theta}; \mu)}\, \varphi_{n}(e^{i\theta}; \mu)\, d \mu(e^{i\theta}) = \delta_{m,n}, 
\end{equation*}
for $\quad m,n=0,1,2, \dots $, where $\delta_{m,n}$ stands for the Kronecker delta. These are orthogonal polynomials on the unit circle, or in short, OPUC. A recent complete treatise on OPUC is the monograph \cite{Simon-Book-p1}. Among their fundamental properties is that all their zeros belong to the open unit disk $\D:= \{ z\in \C: \, |z|<1\} $. 

The \textit{reproducing  kernels} $K_{n}(z,\,w; \mu)$ (also known as \textit{Christoffel--Darboux} kernels or simply \textit{CD kernels}) associated with the measure $\mu$ are given by   
\begin{equation}\label{defCD}
   K_{n}(z,w; \mu) = \sum_{j=0}^{n} \overline{\varphi_j(w;\mu)}\,\varphi_j(z;\mu), \quad n \geq 0.
\end{equation}
They have been the subject of study in many recent contributions including the review \cite{Simon-Survey-2008}  on their use in the spectral theory of orthogonal polynomials and random matrices.

In what follows we use the standard notation for the reversed (or conjugate-reciprocal) polynomials:  if $q$ is an algebraic polynomial of degree $n$, then 
\[
q^{\ast}(z) := z^{n} \overline{q(1/\overline{z})}. 
\]
With this notation, the  well-known Christoffel-Darboux formula says that for $z \neq w$, 
\begin{equation}\label{CDformula}
  \begin{split}
    K_{n}(z,w; \mu)  & \dsp = \frac{ \overline{\varphi_{n+1}(w; \mu)}\,\varphi_{n+1}(z; \mu)- \overline{\varphi_{n+1}^{\ast}(w; \mu)}\,\varphi_{n+1}^{\ast}(z; \mu)} {\overline{w}\, z-1} \\[1ex]
    & \dsp = \frac{ \overline{w} z \overline{\varphi_{n}(w; \mu)}\,\varphi_{n}(z; \mu)- \overline{\varphi_{n}^{\ast}(w; \mu)}\,\varphi_{n}^{\ast}(z; \mu) } {\overline{w}\, z-1}\,,  \quad n \geq 0.
  \end{split}
\end{equation}
Notice that $K_{n}(z,0; \mu)=\overline{\varphi_{n}^{\ast}(0; \mu)}\,\varphi_{n}^{\ast}(z; \mu)$. On the other hand, if  $w \in\T$, then all zeros of $K_{n}(z,w; \mu)$ (as a polynomial in $z$) lie on $\T$, and up to a normalization factor, $(\overline{w}z-1) K_n(z,w; \mu)$ is a so-called para-orthogonal polynomial of degree $n+1$. For information concerning para-orthogonal polynomials we refer to \cite{BracRangaSwami-2016} and references therein. 
 
A multiplication of the given measure $\mu$ by a factor that is positive on its support, $\supp(\mu)$, yields a new measure and a corresponding set of OPUC and of CD kernels. It is a natural question to ask whether there is an explicit connection between these two sets.

In this paper we are interested in the case when the factor is of the form  $|g|^2$, where $g$ is a polynomial. For the orthogonality on the real axis, this is the content of the so-called Christoffel formula (see, for example, \cite{Szego-Book}), which was extended in \cite{Ismail-Ruedemann-JAT1992} to cover OPUC (see also \cite{Li-Marcellan-CATCF1999}, which generalizes \cite{Ismail-Ruedemann-JAT1992} and constitutes a nice survey of related results obtained prior to 1999, as well as some recent related results in \cite{AMT1,AMT2}). In these cases there is a determinantal expression for the ``new'' orthogonal polynomials in terms of those orthogonal with respect to $\mu$.

One of the goals of this paper is to obtain such a determinantal formula for the CD kernels on $\T$. Observe that this kind of expressions is not a trivial consequence of the analogous formulas for OPUC.

Recall that the classical Fej\'er--Riesz theorem (see \cite[\S 1.12]{Grenander-Szego-Book}) says that every non-negative trigonometric polynomial $f(\theta)$ can be written as $|g(z)|^2$, $z=e^{i\theta}$, where $g$ is an algebraic polynomial non-vanishing in $\D$. Equivalently, we can say that $f(\theta)$ is of the form $z^{-m} G(z)$, where $G$ is a \textit{self-reciprocal} polynomial (i.e., $G^*=G$) of degree $2m$.

Motivated by this result, we slightly weaken the assumptions of Fej\'er and Riesz and require the trigonometric multiplication factor of $\mu$ to be non-negative only within the support of $\mu$. More precisely, let $G_{2m}$ be a self-reciprocal polynomial of exact degree $2m$, $m \in \N$, and non-negative on $\supp(\mu)$, and let
\begin{equation}\label{defMutilde}
d \nu(\zeta) = \frac{G_{2m}(\zeta)}{\zeta^m} d \mu(\zeta) = |G_{2m}(\zeta)| d \mu(\zeta), \quad \zeta \in \T,
\end{equation}
which is also a positive measure on $\T$. 

We denote by 
\[   
     z_1, z_2, \dots, z_{2m}
\]
the zeros of $G_{2m}$.  Those of them not on $\T$ must appear in pairs symmetric with respect to $\T$; notice that no $z_j$ is $=0$.  However, unlike in the case of Fej\'er--Riesz, if $\supp(\mu)\neq \T$, zeros of $G_{2m}$ on $\T$ also can be simple, as long as the hypothesis of positivity of $G_{2m}(\zeta)/\zeta^{m}$ on $\supp(\mu)$ (i.e., the positivity of $\nu$ on $\supp(\mu)$) is preserved.\footnote{For instance, if $\supp(\mu)=\{e^{i\theta}:\, 0<\theta_1\leq \theta \leq \theta_2 <2\pi \}$, we can consider the self reciprocal polynomial $G_2(z) = e^{-i\alpha/2} e^{-i\beta/2} (z -e^{i \alpha}) (z -e^{i\beta})$,  with $0 < \alpha \leq \theta_1$ and $\theta_2 \leq \beta < 2\pi$. Then the rational function
\[
    \frac{G_{2}(\zeta)}{\zeta} = \frac{G_{2}(e^{i\theta})}{e^{i\theta}} = 4 \sin \frac{\theta-\alpha}{2} \sin \frac{\beta - \theta}{2}
\]
is positive on $\supp(\mu)$, but not on the entire $\T$; see Example~\ref{example43} in Section~\ref{sec:examples} below. }

In what follows we will be mainly interested in the case when all $z_j$'s are pairwise distinct (or equivalently, when all zeros of  $G_{2m}$ are simple). 

\begin{definition} \label{def:admissible}
Given $m\in \N$, we call a set $\mathcal P=\{p_0, p_1, \dots, p_{2m}\}$ of $2m+1$ not identically $0$ algebraic polynomials $p_j$ \textit{admissible} if %$p_j\not \equiv 0$, $j=0, 1, \dots, 2m$, 
$p_0(z)\equiv 1$, $p_{2m}(z)= z^m$, 
\begin{equation}
\label{def-pj}
p_{j}(z)=\sum_{j=\max\{ 0, j-m\}}^{\min \{j,m\}} b_{ij}\, z^i, \quad j= 1, \dots, 2m-1,
\end{equation}
and  either one of the following three conditions is satisfied:
%Without loss of generality, let's take $p_0=1$. Additionally, we assume that either
\begin{equation} \label{assumption1}
\deg p_j < j \quad \text{for } j=1, \dots, 2m,
\end{equation}
\begin{equation} \label{assumption3}
\deg p_j < m \quad \text{for } j= 1, \dots, 2m-1,
\end{equation}
or
\begin{equation} \label{assumption2}
p_j(0)=0 \quad \text{for } j=1, \dots, 2m.
\end{equation}
\end{definition}

\begin{theo}\label{thm1}
For an admissible set $\mathcal P=\{p_0, p_1, \dots, p_{2m}\}$ and $w\in \C$ define 
%a row vector of $2m+1$ elements,
%$$
%\vec Q (z) = \left(Q_0(z) , Q_1(z) , \dots, Q_{2m-1}(z)  , Q_{2m}(z)  \right),
%$$
%where
%
\begin{equation}\label{def-Qj}
Q_j(z,w) := p_j(z) K_{n+2m-j}(z,w; \mu), \quad j=0, 1, \dots, 2m,
\end{equation}
and the $(2m+1)\times (2m+1)$ matrix
\begin{equation}\label{defQmatrix}
\bm Q(z,w)=\bm Q^{(\mathcal P)}(z,w) :=\begin{pmatrix}
Q_0(z,w) & Q_1(z,w) & \dots & Q_{2m}(z,w) \\
Q_0(z_1,w) & Q_1(z_1,w) & \dots & Q_{2m}(z_1,w) \\
\vdots & \vdots &  \ddots & \vdots \\
Q_0(z_{2m},w) & Q_1(z_{2m},w) & \dots & Q_{2m}(z_{2m},w)
\end{pmatrix}.
\end{equation}
Then, there exists a polynomial $C_n(w) =C^{(\mathcal P)}_n(w)$ of degree $\leq (2m+1)(n+m)$ such that
\begin{equation}\label{mainformula}
\det \bm Q(z,w)=  C_n(\overline{w}) \, G_{2m}(z)K_n(z,w; \nu) .
\end{equation}
\end{theo}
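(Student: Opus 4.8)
The plan is to recognize $\det\bm Q(z,w)/G_{2m}(z)$ as a polynomial of degree $\le n$ in $z$ that reproduces point values against $d\nu$ in exactly the way $K_n(\cdot,w;\nu)$ does, up to a factor independent of the test polynomial, and then to invoke uniqueness of the reproducing kernel. First I would expand the determinant along its first row,
\[
\det\bm Q(z,w)=\sum_{j=0}^{2m}A_j(w)\,p_j(z)\,K_{n+2m-j}(z,w;\mu),
\]
where $A_j(w)$ is the cofactor of the first-row entry $Q_j(z,w)$; note that $A_j(w)$ is independent of $z$, and since each $K_N(z_i,w;\mu)=\sum_l\overline{\varphi_l(w;\mu)}\,\varphi_l(z_i;\mu)$ is a polynomial in $\overline w$, so is every $A_j$. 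Setting $z=z_i$ makes the first row of $\bm Q$ equal to its $(i{+}1)$-th row, whence $\det\bm Q(z_i,w)=0$ for $i=1,\dots,2m$; for simple zeros this forces $G_{2m}(z)\mid\det\bm Q(z,w)$ as polynomials in $z$, and I write $R_n(z,w)$ for the quotient. The coefficient-support condition in \eqref{def-pj} gives $\deg p_j\le\min\{j,m\}$, hence $\deg_z Q_j\le n+2m$ for every $j$, so $\deg_z\det\bm Q\le n+2m$ and $\deg_z R_n\le n$. (For repeated zeros the vanishing-row argument is upgraded to a confluent one, differentiating in $z$.)

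Next I would test $R_n$ against an arbitrary polynomial $P$ with $\deg P\le n$. Using $d\nu=(G_{2m}/z^{m})\,d\mu$ together with $R_n=\det\bm Q/G_{2m}$,
\[
\int_\T R_n(z,w)\,\overline{P(z)}\,d\nu(z)=\sum_{j=0}^{2m}A_j(w)\int_\T\frac{p_j(z)}{z^{m}}\,K_{n+2m-j}(z,w;\mu)\,\overline{P(z)}\,d\mu(z).
\]
On $\T$ one has $\overline z=1/z$, so $p_j(z)/z^{m}=\overline{q_j(z)}$ with $q_j(z):=z^{m}\,\overline{p_j(1/\overline z)}$; by \eqref{def-pj} this $q_j$ has only nonnegative powers and $\deg q_j\le\min\{m,2m-j\}\le 2m-j$, so $\deg(q_jP)\le n+2m-j$. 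Thus the reproducing property $\int_\T K_{N}(z,w;\mu)\,\overline{\Pi(z)}\,d\mu=\overline{\Pi(w)}$, valid for $\deg\Pi\le N$, applies with $N=n+2m-j$ and $\Pi=q_jP$, giving $\overline{q_j(w)P(w)}$ for each summand. Summation then yields
\[
\int_\T R_n(z,w)\,\overline{P(z)}\,d\nu(z)=C_n(\overline w)\,\overline{P(w)},\qquad C_n(\overline w):=\sum_{j=0}^{2m}A_j(w)\,\overline{q_j(w)},
\]
and $C_n$ is a polynomial in $\overline w$ because each $A_j$ and each $\overline{q_j(w)}$ is.

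Since $R_n(\cdot,w)$ and $C_n(\overline w)K_n(\cdot,w;\nu)$ are polynomials of degree $\le n$ in $z$ inducing the same functional $P\mapsto C_n(\overline w)\,\overline{P(w)}$ on all $P$ of degree $\le n$, expanding their difference in the orthonormal basis $\{\varphi_k(\cdot;\nu)\}_{k=0}^{n}$ shows every $\nu$-Fourier coefficient of that difference vanishes, so the two coincide; multiplying by $G_{2m}(z)$ delivers \eqref{mainformula}. For the degree bound, a term in the cofactor $A_j$ selects one entry from each column $k\ne j$, so $\deg_{\overline w}A_j\le\sum_{k\ne j}(n+2m-k)=(2m+1)(n+m)-(n+2m-j)$; together with $\deg_{\overline w}\overline{q_j}\le\min\{m,2m-j\}$ this combines to $\deg C_n\le(2m+1)(n+m)-n\le(2m+1)(n+m)$. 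The admissibility alternatives \eqref{assumption1}--\eqref{assumption2} enter to guarantee that $\det\bm Q\not\equiv0$, i.e.\ $C_n\not\equiv0$, so that the identity is a genuine representation of $K_n(\cdot,w;\nu)$ rather than the trivial $0=0$.

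The step I expect to be most delicate is the conjugation bookkeeping in the second paragraph: one must check, using precisely the support of the exponents prescribed by \eqref{def-pj}, both that $p_j(z)/z^{m}$ is the boundary value on $\T$ of the conjugate of an honest polynomial $q_j$ and that $\deg q_j\le 2m-j$, so that $q_jP$ stays within the reproducing range of $K_{n+2m-j}(\cdot,w;\mu)$ and $C_n$ emerges as a function of $\overline w$ alone. A secondary technical point is the confluent treatment of multiple zeros of $G_{2m}$, where the simple divisibility argument has to be replaced by a Hermite-type vanishing condition on $\det\bm Q$ and its $z$-derivatives.
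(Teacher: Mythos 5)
Your proof is correct, and it takes a genuinely different route from the paper's. The paper does not test $\det\bm Q/G_{2m}$ against arbitrary polynomials; instead it characterizes $K_n(\cdot,w;\nu)$ up to a constant factor (Lemma~\ref{lemma:uniqueness}) by orthogonality relations with respect to the complex measure $(1-\overline{w}\zeta)\,d\nu(\zeta)$, supplemented by a normalization that depends on a case split --- exact degree $n$ when $|w|\geq 1$, nonvanishing of a certain integral when $|w|\leq 1$ --- and this in turn requires the positivity Lemma~\ref{lemma:positive}, the orthogonality of $\det\bm Q$ (Lemma~\ref{lemma11}), and the dichotomy of Lemma~\ref{lemma3.2} between $A_n\equiv 0$ and the normalization conditions. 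You replace all of that with the reproducing property itself: the cofactor expansion along the first row, the observation that the exponent-support condition \eqref{def-pj} makes $p_j(z)/z^m=\overline{q_j(z)}$ on $\T$ with $q_j$ an honest polynomial satisfying $\deg(q_jP)\leq n+2m-j$, and uniqueness of the representer of the functional $P\mapsto C_n(\overline{w})\,\overline{P(w)}$ in the finite-dimensional space of polynomials of degree at most $n$ in $L^2(d\nu)$. Your route buys three things: no case analysis in $|w|$, no complex-measure machinery, and an explicit closed form $C_n(\overline{w})=\sum_{j=0}^{2m}A_j(w)\,\overline{q_j(w)}$ that makes the polynomiality of $C_n$ in $\overline{w}$ and the degree bound completely transparent (you even obtain the slightly sharper bound $(2m+1)(n+m)-n$), a point the paper dismisses as a ``straightforward consequence'' of \eqref{definitionofA}--\eqref{identityforA} without detail. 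What the paper's heavier setup buys in exchange is Theorem~\ref{thm1bis}: the $\mu_w$-orthogonality framework and Lemmas~\ref{lemma3.2}--\ref{lemma25} are precisely what gets recycled there to obtain sufficient conditions ($\Delta_0(w)\neq 0$ or $\Delta_m(w)\neq 0$) for $C_n(\overline{w})\neq 0$, which your identity alone does not address. Finally, your worry about a confluent argument for multiple zeros is moot for Theorem~\ref{thm1} as stated: if the $z_i$ repeat, two rows of $\bm Q$ coincide, so $\det\bm Q\equiv 0$ and \eqref{mainformula} holds trivially with $C_n\equiv 0$; the derivative modification belongs to Remark~\ref{Rmk-1.1}, not to the theorem.
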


Observe that for certain values of $w$, both sides of \eqref{mainformula} can vanish, in which case the identity in \eqref{mainformula} is formally correct, but practically useless. Thus, a natural question is about sufficient conditions for $C_n\neq 0$.

\begin{theo}\label{thm1bis}
	Let all the zeros of $G_{2m}$ be simple. 
	For an admissible set $\mathcal P=\{p_0, p_1, \dots, p_{2m}\}$ and $w\in \C$, with the notations of Theorem~\ref{thm1}, if either 
	\begin{enumerate}
		\item[i)] $|w|\geq 1$, condition \eqref{assumption1} holds and polynomials $Q_1(\cdot, w), Q_2(\cdot, w), \dots,   Q_{2m}(\cdot, w)$ are linearly independent, or
		\item[ii)] $|w|\leq 1$, condition \eqref{assumption3} holds and polynomials $Q_0(\cdot, w), Q_1(\cdot, w), \dots,   Q_{2m-1}(\cdot, w)$  are linearly independent, or
		\item[iii)] $0<|w|\leq 1$, condition \eqref{assumption2} holds   and polynomials $Q_1(\cdot, 1/\overline{w})$, $Q_2(\cdot, 1/\overline{w})$, \dots,   $Q_{2m}(\cdot, 1/\overline{w})$  are linearly independent,
	\end{enumerate}
	then $C_n^{(\mathcal P)}(\overline{w}) \neq 0$.
\end{theo}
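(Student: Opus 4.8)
The plan is to deduce the nonvanishing of $C_n^{(\mathcal P)}(\overline w)$ from the nonvanishing of the left-hand polynomial in \eqref{mainformula}. Since $G_{2m}\not\equiv 0$ and $K_n(w,w;\nu)=\sum_{j}|\varphi_j(w;\nu)|^2\ge \kappa_0(\nu)^2>0$ forces $K_n(\cdot,w;\nu)\not\equiv 0$, identity \eqref{mainformula} shows that $C_n^{(\mathcal P)}(\overline w)\neq 0$ is \emph{equivalent} to $\det\bm Q(\cdot,w)\not\equiv 0$ as a polynomial in $z$. Throughout I use that the zeros of $\varphi_k(\cdot;\mu)$ lie in $\D$, so $\varphi_k(w;\mu)\neq 0$ whenever $|w|\ge 1$, while the zeros of $\varphi_k^{\ast}(\cdot;\mu)$ lie outside $\overline{\D}$, so $\varphi_k^{\ast}(w;\mu)\neq 0$ whenever $|w|\le 1$ (and likewise for $\nu$).

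In case i) I expand $\det\bm Q(z,w)$ along its first row, $\det\bm Q(z,w)=\sum_{j=0}^{2m}(-1)^{j}Q_j(z,w)\,D_j$, where $D_j$ is the minor of $\bm Q(z,w)$ obtained by deleting the first row and the $(j{+}1)$-st column; each $D_j$ is constant in $z$. For $|w|\ge 1$ one has $\deg_z K_N(\cdot,w;\mu)=N$, and condition \eqref{assumption1} gives $\deg Q_j\le (j-1)+(n+2m-j)=n+2m-1$ for $j\ge 1$, whereas $\deg Q_0=n+2m$. Hence $Q_0$ is the unique summand of maximal degree and
\[
[z^{n+2m}]\det\bm Q(z,w)=\overline{\varphi_{n+2m}(w;\mu)}\,\kappa_{n+2m}(\mu)\,D_0 .
\]
Comparing with the coefficient of $z^{n+2m}$ on the right of \eqref{mainformula}, namely $C_n(\overline w)\,\ell\,\overline{\varphi_n(w;\nu)}\,\kappa_n(\nu)$ with $\ell\neq 0$ the leading coefficient of $G_{2m}$, and using $\varphi_{n+2m}(w;\mu),\varphi_n(w;\nu)\neq 0$ for $|w|\ge 1$, I get $C_n(\overline w)\neq 0\iff D_0\neq 0$. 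Finally $D_0$ is, up to sign, $\det\big(Q_k(z_i,w)\big)_{i,k=1}^{2m}$, so $D_0\neq 0$ is equivalent to the images of $Q_1(\cdot,w),\dots,Q_{2m}(\cdot,w)$ being linearly independent in $\C[z]/(G_{2m})$, i.e.\ to no nontrivial combination $\sum_{k\ge 1}a_k Q_k(\cdot,w)$ being divisible by $G_{2m}$.

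Case ii) is the dual, organised around the constant term rather than the leading one: for $|w|\le 1$ we have $G_{2m}(0)\neq 0$ and, using $K_n(0,w;\nu)=\kappa_n(\nu)\overline{\varphi_n^{\ast}(w;\nu)}\neq 0$, evaluating \eqref{mainformula} at $z=0$ gives $C_n(\overline w)\neq 0\iff\det\bm Q(0,w)\neq 0$. Since $Q_{2m}(0,w)=0$ while \eqref{assumption3} controls the remaining trailing behaviour, $\det\bm Q(0,w)\neq 0$ reduces to the analogous independence, modulo the relevant ideal, being governed by $Q_0(\cdot,w),\dots,Q_{2m-1}(\cdot,w)$. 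Case iii) I reduce to case i) by reflection: replacing $w$ by $1/\overline w$ (of modulus $\ge 1$) and each $p_j$ by its degree-$j$ conjugate-reciprocal $z^{j}\overline{p_j(1/\overline z)}$ turns \eqref{assumption2} ($p_j(0)=0$) into \eqref{assumption1} ($\deg p_j<j$), producing again an admissible set; the independence hypothesis surviving this transport is exactly the stated independence of $Q_1(\cdot,1/\overline w),\dots,Q_{2m}(\cdot,1/\overline w)$, after which the argument of case i) applies.

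The main obstacle in all three cases is the final step: promoting the assumed linear independence of the $Q_j(\cdot,w)$ \emph{as polynomials} to their independence \emph{modulo} $G_{2m}$ (equivalently, the nonvanishing of the extremal minor $D_0$, resp.\ $D_{2m}$). Linear independence in $\C[z]$ need not survive reduction modulo an ideal, so this is where the structure must enter: the factorisation $Q_j=p_j\,K_{n+2m-j}(\cdot,w;\mu)$, the Christoffel--Darboux formula \eqref{CDformula}, and the fact that the $z_i$ are the $2m$ simple, $\T$-symmetric zeros of the self-reciprocal $G_{2m}$. Concretely, I would argue by contradiction: a relation $\sum a_k Q_k(\cdot,w)=G_{2m}\,r$ with $\deg r\le n-1$, rewritten through \eqref{CDformula}, should be incompatible with the degree bounds from \eqref{assumption1} (resp.\ \eqref{assumption3}) unless all $a_k$ vanish, contradicting independence. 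Making this precise, i.e.\ establishing the transversality $\spn\{Q_1,\dots,Q_{2m}\}\cap(G_{2m})=\{0\}$ inside the polynomials of degree at most $n+2m-1$, is the technical heart of the proof.
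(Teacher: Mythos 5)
Your outer reductions are correct and coincide with the paper's: the equivalence $C_n^{(\mathcal P)}(\overline w)\neq 0 \Leftrightarrow \det\bm Q(\cdot,w)\not\equiv 0$, the leading-coefficient comparison in case i) showing that this amounts to $\Delta_0(w)\neq 0$ in \eqref{defDelta} (this is the content of the paper's Lemma~\ref{lemma3.2bis}), and the reduction of case iii) to case i) via $w\mapsto 1/\overline w$, $\mathcal P\mapsto\widehat{\mathcal P}$ (the paper's Proposition~\ref{prop1}). However, the proof stops exactly at its crux, and you acknowledge this yourself: nothing in the proposal shows that linear independence of $Q_1(\cdot,w),\dots,Q_{2m}(\cdot,w)$ in $\C[z]$ implies $\spn\{Q_1(\cdot,w),\dots,Q_{2m}(\cdot,w)\}\cap (G_{2m})=\{0\}$, equivalently $\Delta_0(w)\neq 0$. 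Worse, the route you propose for filling it --- a contradiction with ``the degree bounds from \eqref{assumption1}, rewritten through \eqref{CDformula}'' --- cannot work. Under \eqref{assumption1} each $Q_k(\cdot,w)$, $k\geq 1$, has degree $\leq n+2m-1$, and a relation $\sum_{k} a_k Q_k(\cdot,w)=G_{2m}\,r$ with $\deg r\leq n-1$ violates no degree constraint at all: inside the $(n+2m)$-dimensional space of polynomials of degree $\leq n+2m-1$ you are intersecting a $2m$-dimensional subspace with the $n$-dimensional subspace $\{G_{2m}\,r:\ \deg r\leq n-1\}$, and these dimensions are exactly complementary, so trivial intersection is the generic situation but is not forced by any algebraic or degree argument. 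The paper's Lemma~\ref{lemma25} closes this gap analytically, through the measure: by Lemma~\ref{lemma11}, $\int\overline{\zeta^{s}}\,Q_j(\zeta,w)\,d\mu_w(\zeta)=0$ for $m+1\leq s\leq m+n$; hence if $S=\sum_{k\geq 1}a_kQ_k(\cdot,w)=G_{2m}P$ with $P\not\equiv 0$, $\deg P\leq n-1$, then testing $S$ against $\overline{\zeta^{m+1}P(\zeta,w)}$ (a combination of precisely those $\overline{\zeta^s}$) gives
\[
0=\int\overline{\zeta^{m+1}P(\zeta,w)}\,S(\zeta,w)\,d\mu_w(\zeta)
=\int\overline{\zeta}\,\bigl|P(\zeta,w)\bigr|^{2}\,\bigl|G_{2m}(\zeta)\bigr|\,d\mu_w(\zeta),
\]
and Lemma~\ref{lemma:positive}, ii), forces $P\equiv 0$ --- a contradiction (for $|w|\leq 1$ one drops the factor $\overline\zeta$ and uses part i) of that lemma). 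This orthogonality-plus-positivity mechanism is the missing idea; it cannot be replaced by \eqref{CDformula} and degree counting.

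A second, smaller defect is in case ii). Evaluating \eqref{mainformula} at $z=0$ indeed gives $C_n(\overline w)\neq 0\Leftrightarrow\det\bm Q(0,w)\neq 0$, but this does not isolate the minor $\Delta_m(w)$ of \eqref{defDeltam}: under \eqref{assumption3}, admissibility forces $p_j(0)=0$ only for $j>m$, so $\det\bm Q(0,w)$ is a signed combination of the several minors complementary to the entries $Q_j(0,w)$ with $0\leq j\leq m$, and its nonvanishing neither follows from nor visibly reduces to the independence of $Q_0(\cdot,w),\dots,Q_{2m-1}(\cdot,w)$. The paper instead applies the functional $p\mapsto\int\overline{\zeta^{m}}\,p(\zeta)\,d\mu_w(\zeta)$ to the first row of $\bm Q$: by \eqref{integralvanishing} it annihilates the columns $Q_0,\dots,Q_{2m-1}$, leaving exactly $\Delta_m(w)\int K_n(\zeta,w;\mu)\,d\mu_w(\zeta)$ with nonzero integral factor (Lemma~\ref{lemma:uniqueness}, b)); that functional, rather than evaluation at the origin, is the correct substitute for your leading-coefficient comparison when $|w|\leq 1$.
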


\begin{remark} \label{Rmk-1.1} If the polynomial $G_{2m}$ has non-simple zeros, then the results above still hold if one replaces the polynomials in each row of the matrix $\bm Q$ by the respective derivatives in accordance with the order of multiplicity.  For example, if $z_1 \neq z_2  = z_3$,  then the fourth row of $\bm Q$ must be replaced by 
	\[ Q^{\prime}_{0}(z_2, w),  Q^{\prime}_{1}(z_2, w),  \cdots, Q^{\prime}_{2m-1}(z_2, w),  Q^{\prime}_{2m}(z_2, w),
	\]
	where $Q_j'$ stands for its derivative with respect to $z$.
\end{remark}

%Conditions  \eqref{assumption1} and \eqref{assumption2} are closely related. Indeed, g
Given an admissible set $\mathcal P=\{p_0, p_1, \dots, p_{2m}\}$, we define $\widehat{\mathcal P}=\{\widehat{p}_0, \widehat{p}_1, \dots, \widehat{p}_{2m}\}$, with
\begin{equation}\label{definitionHat}
\widehat{p}_j(z)  := z^{j} \overline{p_j(1/\overline{z})}= z^{j-\deg(p_j)} p_j^*(z), \quad j=0, 1, \dots, 2m.
\end{equation}
Observe that $\widehat{(\widehat{\mathcal P})}= \mathcal P$.

\begin{prop} \label{prop1}
	A set of polynomials $\mathcal P=\{p_0, p_1, \dots, p_{2m}\}$ is admissible if and only if $\widehat{\mathcal P}=\{\widehat{p}_0, \widehat{p}_1, \dots, \widehat{p}_{2m}\}$  is. Moreover, $\mathcal P$ satisfies \eqref{assumption1} (resp., \eqref{assumption2}), then $\widehat{\mathcal P}$ satisfies \eqref{assumption2} (resp., \eqref{assumption1}). 
	
	Additionally, if $w\neq 0$, 
	\begin{equation}\label{equivalence}
	C^{(\mathcal P)}_n(w) \neq 0 \quad \Leftrightarrow \quad C^{(\widehat{\mathcal P})}_n\left( \frac{1}{\overline{w}}   \right)\neq 0.
	\end{equation}
\end{prop}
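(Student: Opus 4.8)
The plan is to treat the two assertions separately: the structural claim that the hat map preserves admissibility together with the correspondence \eqref{assumption1}$\leftrightarrow$\eqref{assumption2}, and the analytic equivalence \eqref{equivalence}. For the structural part I would argue entirely in terms of the coefficient support of the $p_j$. Writing $p_j(z)=\sum_i b_{ij}z^i$, definition \eqref{definitionHat} gives $\widehat p_j(z)=\sum_i \overline{b_{ij}}\,z^{\,j-i}$, so the hat acts on exponents by the reflection $i\mapsto j-i$. Since this reflection interchanges $\max\{0,j-m\}$ and $\min\{j,m\}$, the support condition \eqref{def-pj} is preserved; moreover $\widehat p_0\equiv 1$ and $\widehat p_{2m}=z^m$ because $p_0^\ast\equiv 1$ and $p_{2m}^\ast\equiv 1$. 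Hence $\widehat{\mathcal P}$ has the correct shape, and as $\widehat{(\widehat{\mathcal P})}=\mathcal P$ it is enough to prove one implication. The same reflection exchanges the top coefficient of $p_j$ (the $z^{\min\{j,m\}}$ term) with the bottom coefficient of $\widehat p_j$ (the $z^{\max\{0,j-m\}}$ term); reading \eqref{assumption1} as ``the $z^{j}$ coefficient vanishes'' and \eqref{assumption2} as ``$p_j(0)=0$,'' this swap is precisely \eqref{assumption1}$\leftrightarrow$\eqref{assumption2}. The alternative \eqref{assumption3} I would check directly from the reflected support; this is the most delicate piece of the bookkeeping, since \eqref{assumption3} constrains the top rather than the $z^{j}$ coefficient, and it is the step I expect to need the most care.

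For \eqref{equivalence} the crucial ingredient is an inversion symmetry of the kernels. Starting from the Christoffel--Darboux formula \eqref{CDformula} together with $\varphi_k^\ast(z)=z^k\overline{\varphi_k(1/\overline z)}$, a direct computation gives, for every degree $N$,
\[
K_N(1/\overline z,1/\overline w;\mu)=(\overline z\,w)^{-N}\,\overline{K_N(z,w;\mu)}.
\]
Writing $Q_j^{(\widehat{\mathcal P})}(z,w)=\widehat p_j(z)\,K_{n+2m-j}(z,w;\mu)$ for the entries built from $\widehat{\mathcal P}$, and combining this with $\widehat p_j(1/\overline z)=\overline z^{-j}\,\overline{p_j(z)}$ (again from \eqref{definitionHat}) and definition \eqref{def-Qj}, I obtain, with $N=n+2m-j$,
\[
Q_j^{(\widehat{\mathcal P})}(1/\overline z,1/\overline w)=w^{\,j-(n+2m)}\,\overline z^{-(n+2m)}\,\overline{Q_j(z,w)} .
\]

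I would then substitute these relations into the matrix \eqref{defQmatrix} built from $\widehat{\mathcal P}$, taking the free point to be $1/\overline z$ and using that $G_{2m}$ is self-reciprocal, so that $z\mapsto 1/\overline z$ permutes its zero set $\{z_1,\dots,z_{2m}\}$ by some permutation $\sigma$. Pulling out the row factors $\overline z^{-(n+2m)}$ (and the analogous nonzero constants attached to the permuted zeros), the column factors $w^{\,j}$, and the common power of $w$ leaves $\operatorname{sgn}(\sigma)\,\overline{\det\bm Q^{(\mathcal P)}(z,w)}$, so that
\[
\det\bm Q^{(\widehat{\mathcal P})}(1/\overline z,1/\overline w)=\Lambda(z,w)\,\overline{\det\bm Q^{(\mathcal P)}(z,w)},
\]
with $\Lambda(z,w)\neq 0$ for $z,w\neq 0$. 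The main obstacle I anticipate is exactly this determinantal bookkeeping: correctly tracking the reciprocal pairing of the zeros of $G_{2m}$, the sign of $\sigma$, and the three families of scalar prefactors (the case of multiple zeros being handled through the derivative rows of Remark~\ref{Rmk-1.1}).

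Finally I would convert identical vanishing in $z$ into statements about $C_n$ through Theorem~\ref{thm1}. Since $K_n(w,w;\nu)=\sum_{k=0}^n|\varphi_k(w;\nu)|^2>0$, the polynomial $z\mapsto G_{2m}(z)K_n(z,w;\nu)$ is not identically zero, so \eqref{mainformula} yields $C_n^{(\mathcal P)}(\overline w)\neq 0\Leftrightarrow\det\bm Q^{(\mathcal P)}(\cdot,w)\not\equiv 0$ and likewise $C_n^{(\widehat{\mathcal P})}(1/w)\neq 0\Leftrightarrow\det\bm Q^{(\widehat{\mathcal P})}(\cdot,1/\overline w)\not\equiv 0$. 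Because $z\mapsto 1/\overline z$ is a bijection off the origin and $\Lambda\neq 0$, the displayed determinant identity shows that these two non-vanishing conditions are equivalent, that is, $C_n^{(\mathcal P)}(\overline w)\neq 0\Leftrightarrow C_n^{(\widehat{\mathcal P})}(1/w)\neq 0$ for $w\neq 0$. Replacing $w$ by $\overline w$ then gives exactly \eqref{equivalence}.
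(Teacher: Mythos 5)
Your proposal follows essentially the same route as the paper's proof, and it is correct wherever the proposition itself is sound: the coefficient-reflection argument ($i\mapsto j-i$ on the support) for the structural part; the kernel inversion $K_N(1/\overline z,1/\overline w;\mu)=(\overline z\,w)^{-N}\overline{K_N(z,w;\mu)}$, which is exactly the paper's identity \eqref{identityforQs} for the $Q_j$'s in rearranged form; the resulting determinant identity using self-reciprocity of $G_{2m}$; and the conversion to non-vanishing of $C_n$ via \eqref{mainformula} applied to both $\mathcal P$ and $\widehat{\mathcal P}$. If anything, you are more careful than the paper: the paper silently identifies the matrix with rows at the points $1/\overline{z_i}$ with the matrix $\bm Q^{(\widehat{\mathcal P})}$ evaluated at the zeros of $G_{2m}$, suppressing the permutation $\sigma$ of the zero set and its sign, which you track explicitly (and which is indeed harmless, since only non-vanishing matters); you also justify $G_{2m}(\cdot)K_n(\cdot,w;\nu)\not\equiv 0$ via $K_n(w,w;\nu)>0$, which the paper leaves implicit.

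The one piece you left open, transporting \eqref{assumption3} through the hat map, deserves comment: it is not merely delicate, it genuinely fails, and the paper's proof (which dismisses the first assertion as ``a straightforward consequence'' of \eqref{definitionHat}) glosses over the same point. Concretely, take $m=2$ and $p_1(z)=z$, $p_2(z)=1$, $p_3(z)=z$ (with $p_0=1$, $p_4=z^2$); this set is admissible via \eqref{assumption3} and only via it, yet $\widehat p_1(z)=1$, $\widehat p_2(z)=z^2$, $\widehat p_3(z)=z^2$ satisfies none of \eqref{assumption1}, \eqref{assumption3}, \eqref{assumption2}, so the ``if and only if'' in the first assertion is false for \eqref{assumption3}-only sets. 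What is true, and what both you and the paper actually establish and use, is the swap \eqref{assumption1} $\leftrightarrow$ \eqref{assumption2} (your reflection argument: the constant coefficient of $\widehat p_j$ is the $z^j$-coefficient of $p_j$ and vice versa), together with \eqref{equivalence}, whose proof requires $\widehat{\mathcal P}$ to be admissible and hence implicitly assumes $\mathcal P$ satisfies \eqref{assumption1} or \eqref{assumption2}. So your caution at that step was well placed; it reflects a defect in the statement rather than a gap in your argument.
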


It would be nice to have a simple recipe for constructing an admissible set $\mathcal P$ for which \eqref{mainformula} renders a non-trivial identity for the CD kernel $K_n(z,w; \nu)$. Obviously, there is no ``universal'' $\mathcal P$ such that $=C^{(\mathcal P)}_n(w)$ in \eqref{mainformula} is $\neq 0$ for \emph{all} $w\in \C$. However, there is a simple admissible set that guarantees this, at least for $|w|=1$.

It is easy to check that $\mathcal P=\{p_0, p_1, \dots, p_{2m}\}$, with
\begin{equation}\label{example1}
p_j(z)=z^{\lfloor j/2 \rfloor}=\begin{cases}
z^{j/2}, & \text{if $j$ is even},\\
z^{(j-1)/2} , & \text{if $j$ is odd},
\end{cases} \quad j=0, 1, \dots, 2m,
\end{equation}
is admissible and satisfies both \eqref{assumption1} and \eqref{assumption3}. The corresponding $\widehat{\mathcal P}=\{\widehat{p}_0, \widehat{p}_1, \dots, \widehat{p}_{2m}\}$ is
\begin{equation}\label{example2}
\widehat p_j(z)=z^{\lfloor j+1/2\rfloor}=\begin{cases}
z^{j/2}, & \text{if $j$ is even,}  \\
z^{(j+1)/2} , & \text{if $j$ is odd,}
\end{cases} \quad j=0, 1, \dots, 2m,
\end{equation}
satisfying, by Proposition~\ref{prop1}, condition  \eqref{assumption2}.

\begin{prop}\label{proponT}
Let the admissible set of polynomials $\mathcal P=\{p_0, p_1, \dots, p_{2m}\}$ be given by \eqref{example1}, and let $w$ be such that
\begin{equation}\label{conditiononw}
    K_n(0,w;\mu)\neq 0 \quad \text{and} \quad \deg K_n(\cdot, w;\mu)=n \quad \text{for } n\in \N.
\end{equation}
Then $Q_0(\cdot, w)$, $Q_1(\cdot, w)$,  \dots,  $ Q_{2m}(\cdot, w)$ are linearly independent.  In particular, it holds for   $|w|=1$.  
\end{prop}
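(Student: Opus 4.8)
The plan is to prove the linear independence directly by induction on $m$, exploiting the rigid degree-and-valuation structure that hypothesis \eqref{conditiononw} forces on the polynomials $Q_j(\cdot,w)=z^{\lfloor j/2\rfloor}K_{n+2m-j}(\cdot,w;\mu)$. Writing $N=n+2m$ and abbreviating $K_k:=K_k(\cdot,w;\mu)$, the assumption $\deg K_k=k$ for all $k$ gives $\deg Q_j=\lfloor j/2\rfloor+(N-j)=N-\lceil j/2\rceil$. Hence $Q_0=K_N$ is the unique member of degree $N$, every $Q_j$ with $j\ge 1$ has degree $\le N-1$, and for $j\ge 2$ each $Q_j$ carries the factor $z^{\lfloor j/2\rfloor}$ with $\lfloor j/2\rfloor\ge 1$, so it is divisible by $z$. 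Note that the degrees coincide in the pairs $(Q_{2i-1},Q_{2i})$, both equal to $N-i$, which is precisely why a pure degree filtration cannot by itself separate the $Q_j$.

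Suppose $\sum_{j=0}^{2m}a_jQ_j(z,w)\equiv 0$. First I would read off the coefficient of $z^N$: only $Q_0=K_N$ attains degree $N$, and its leading coefficient $\overline{\varphi_N(w)}\kappa_N$ is nonzero by \eqref{conditiononw}, forcing $a_0=0$. Next I would evaluate the surviving identity at $z=0$: since $p_1(0)=1$ while $p_j(0)=0$ for $j\ge 2$, the constant term collapses to $a_1K_{N-1}(0,w)$, and $K_{N-1}(0,w)\ne 0$ by \eqref{conditiononw} forces $a_1=0$. With $a_0=a_1=0$ the relation $\sum_{j=2}^{2m}a_j z^{\lfloor j/2\rfloor}K_{N-j}\equiv 0$ has the common factor $z$, which may be cancelled; substituting $j=\ell+2$ and using $\lfloor(\ell+2)/2\rfloor-1=\lfloor\ell/2\rfloor$ and $K_{N-\ell-2}=K_{(N-2)-\ell}$ turns it into $\sum_{\ell=0}^{2(m-1)}a_{\ell+2}\,z^{\lfloor\ell/2\rfloor}K_{(N-2)-\ell}\equiv 0$. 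Since $N-2=n+2(m-1)$, this is literally the defining relation for the admissible set of size $m-1$ with the same index $n$ and the same $w$; as \eqref{conditiononw} is assumed for all indices, the induction hypothesis yields $a_2=\dots=a_{2m}=0$, the base case $m=0$ being the triviality that $K_n\not\equiv 0$. This establishes the independence.

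For the final assertion I would check that $|w|=1$ implies \eqref{conditiononw}. The leading coefficient of $K_k$ is $\overline{\varphi_k(w)}\kappa_k$, which is nonzero because every zero of $\varphi_k$ lies in $\D$; thus $\deg K_k=k$. For the value at the origin, I would combine the symmetry $K_k(0,w)=\overline{K_k(w,0)}$ (immediate from \eqref{defCD}) with the identity $K_k(z,0;\mu)=\overline{\varphi_k^*(0;\mu)}\,\varphi_k^*(z;\mu)$ recorded after \eqref{CDformula} and with $\varphi_k^*(0)=\kappa_k$, obtaining $K_k(0,w)=\kappa_k\,\overline{\varphi_k^*(w)}$. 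Since the zeros of $\varphi_k^*$ lie in $|z|>1$, this is nonzero whenever $|w|\le 1$, in particular for $|w|=1$.

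The main obstacle is exactly the degree coincidence within each pair $(Q_{2i-1},Q_{2i})$; the resolution is the simultaneous use of the two ``endpoint'' extractions, the top coefficient and the constant term, to peel off one pair of indices at a time. After that, divisibility by $z$ renormalizes the problem into a genuinely smaller instance of the same statement, and the delicate point is the bookkeeping that makes this self-similar reduction match indices exactly, keeping $n$ fixed while decreasing $m$ by one so that the induction hypothesis applies verbatim.
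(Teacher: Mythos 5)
Your proof is correct and follows essentially the same route as the paper's own proof: induction on $m$, using the top-degree coefficient and the value at $z=0$ to kill the first two coefficients, and then the self-similar factorization $Q_j^{(n,m)}(z,w)=z\,Q_{j-2}^{(n,m-1)}(z,w)$ to reduce to the smaller instance with the same $n$. Two minor points in your favor: your index bookkeeping $(n,m-1)$ is the correct one (the paper writes $Q_j^{(n,m)}=z\,Q_{j-2}^{(n-1,m-1)}$, an index slip, since the degrees only match when $n$ is kept fixed), and you also verify the ``in particular, $|w|=1$'' claim explicitly via $K_k(0,w)=\kappa_k\,\overline{\varphi_k^{\ast}(w)}$ and the location of the zeros of $\varphi_k$ and $\varphi_k^{\ast}$, which the paper leaves implicit.
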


\begin{coro}
For the admissible sets of polynomials $\mathcal P$ and $\widehat{\mathcal P}$, given by \eqref{example1} and \eqref{example2}, respectively, both $ C^{(\mathcal P)}_n(w) \neq 0$ and $ C^{(\widehat{\mathcal P})}_n(w) \neq 0$ in \eqref{mainformula} when $|w|=1$.
\end{coro}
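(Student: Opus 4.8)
The plan is to assemble the Corollary directly from the three preceding results, with no new computation required. The whole argument turns on the elementary observation that on the unit circle conjugation and inversion coincide: if $|w|=1$ then $\overline{w}$ again has modulus $1$, and $1/\overline{w}=w$. This lets me pass freely between the conclusion $C_n^{(\mathcal P)}(\overline w)\neq 0$ produced by Theorem~\ref{thm1bis} and the statement $C_n^{(\mathcal P)}(w)\neq 0$ that the Corollary asks for, and likewise to transport the result from $\mathcal P$ to $\widehat{\mathcal P}$.

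First I would handle $\mathcal P$ from \eqref{example1}. As already remarked, this set is admissible and satisfies condition \eqref{assumption1}. Fix any $w$ with $|w|=1$; then the hypotheses \eqref{conditiononw} hold (this is the final assertion of Proposition~\ref{proponT}), so that proposition yields that $Q_0(\cdot,w),Q_1(\cdot,w),\dots,Q_{2m}(\cdot,w)$ are linearly independent. In particular the sub-family $Q_1(\cdot,w),\dots,Q_{2m}(\cdot,w)$ is linearly independent. Since $|w|=1\geq 1$ and \eqref{assumption1} holds, item~(i) of Theorem~\ref{thm1bis} applies and gives $C_n^{(\mathcal P)}(\overline w)\neq 0$. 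As $w$ runs over $\T$ so does $\overline w$, whence $C_n^{(\mathcal P)}(w)\neq 0$ for every $|w|=1$.

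It then remains to transfer this to $\widehat{\mathcal P}$ from \eqref{example2}. Here I would invoke the equivalence \eqref{equivalence} of Proposition~\ref{prop1}: for $w\neq 0$, one has $C_n^{(\mathcal P)}(w)\neq 0$ if and only if $C_n^{(\widehat{\mathcal P})}(1/\overline w)\neq 0$. For $|w|=1$ we have $1/\overline w=w$, so the equivalence collapses to $C_n^{(\mathcal P)}(w)\neq 0\Leftrightarrow C_n^{(\widehat{\mathcal P})}(w)\neq 0$. Combining this with the conclusion of the previous paragraph gives $C_n^{(\widehat{\mathcal P})}(w)\neq 0$ for all $|w|=1$, completing the proof.

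There is essentially no hard step here; the argument is a bookkeeping exercise in applying Theorem~\ref{thm1bis}, Proposition~\ref{proponT} and Proposition~\ref{prop1} in turn. The only point that needs care---and the place where a naive reading could slip---is the matching of arguments: Theorem~\ref{thm1bis} delivers nonvanishing of $C_n$ at $\overline w$ (not at $w$), and the equivalence in Proposition~\ref{prop1} is stated at the inverted point $1/\overline w$. The restriction $|w|=1$ is exactly what reconciles these, since there both $\overline w$ and $1/\overline w$ are points of $\T$ and in fact $1/\overline w=w$. I would also note, as an alternative to the last paragraph, that one could instead apply item~(iii) of Theorem~\ref{thm1bis} directly to $\widehat{\mathcal P}$, which satisfies \eqref{assumption2}; but that route would require re-deriving the linear independence of the relevant $Q_j$ attached to $\widehat{\mathcal P}$, whereas passing through \eqref{equivalence} avoids this and is the cleaner path.
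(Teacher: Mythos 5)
Your proof is correct and follows essentially the same route the paper intends: the Corollary is stated there without a separate proof precisely because it is the immediate assembly of Proposition~\ref{proponT} (linear independence for $|w|=1$), Theorem~\ref{thm1bis}, part \emph{i)} (giving $C_n^{(\mathcal P)}(\overline{w})\neq 0$), and the equivalence \eqref{equivalence} of Proposition~\ref{prop1} with $1/\overline{w}=w$ on $\T$ to transfer the conclusion to $\widehat{\mathcal P}$. Your observation that conjugation maps $\T$ onto itself, reconciling the evaluation at $\overline{w}$ with the statement at $w$, is exactly the right bookkeeping.
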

 
%\todo[inline, color=green!40]{	
%Maybe adding the example of $\mu$ Lebesgue to show that these systems don't work for $w=0$.
%}

\begin{example}\label{exampleLeb}
Let us consider the normalized Lebesgue measure on $\T$, 
\begin{equation}\label{normalizedLebesgue}
d\mu(\zeta) = \frac{1}{2\pi} |d\zeta|,  \quad \zeta\in \T.
\end{equation}
Then
$$
K_n(z,w;\mu)=\frac{1-\overline{w}^{n+1} z^{n+1}}{1-\overline{w} z},  \quad n\geq 0,
$$
so that all $w\neq 0$ satisfy conditions \eqref{conditiononw} from Proposition~\ref{proponT}. In particular, for all such $w$, and for the admissible set $\mathcal P$ given by \eqref{example1},  $C_n^{(\mathcal P)}(\overline{w}) \neq 0$ in \eqref{mainformula}. However, $K_n(z,0;\mu)\equiv 1$, which implies that polynomials $Q_j(z,0)$ in \eqref{def-Qj} are linearly dependent for any choice of the admissible set of polynomials $\mathcal P$, and hence, one cannot find an admissible set for which $\det \bm Q(\cdot ,0) \not \equiv 0$. Clearly, we still can recover $K_{n}(z,0; \nu)=\overline{\varphi_{n}^{\ast}(0; \nu)}\,\varphi_{n}^{\ast}(z; \nu)$ by taking limit,
$$
 G_{2m}(z) K_{n}(z,0; \nu) =\lim_{w\to 0}  \frac{1}{C_n^{(\mathcal P)}(\overline{w})}\det \bm Q(z,w).
$$
\end{example}

The proofs of the assertions above are gathered in Section~\ref{sec:proofsKernels}.

In Section~\ref{Sec-ModifiedCDKernel} we consider an interesting particular case of $w=1$, for which $K_{n}(\cdot ,1;\mu)$ constitute an instance of paraorthogonal polynomials on $\T$. A convenient ``symmetrization'' of these polynomials was found in \cite{Costa-Felix-Ranga-JAT2013}; it was shown there that the appropriately normalized $K_{n}(\cdot , 1; \mu)$, that we denote by $R_n(\cdot;\mu)$ (see the precise definition in Section~\ref{Sec-ModifiedCDKernel}) satisfy a three term recurrence relation of the form 
\begin{equation}  \label{Eq-TTRR-Rn}
 \begin{array}{l}
R_{n+1}(z;\mu) = \left[(1+ic_{n+1})z + (1-ic_{n+1})\right] R_{n}(z;\mu) \\[1ex]
   \hspace{40ex} - \ 4(1-g_{n})g_{n+1} z R_{n-1}(z;\mu), 
 \end{array}
\end{equation}
for $n \geq 0$, with $R_{-1}(z) = 0$ and $R_{0}(z) = 1$.  Sequences   $\{c_n\}_{n=1}^{\infty}= \{c_n(\mu)\}_{n=1}^{\infty}$ and $\{g_{n}\}_{n=1}^{\infty}= \{g_{n}(\mu)\}_{n=1}^{\infty}$ are both real, with $0<g_n<1 $ for $n\geq 1$. As shown in \cite{BracRangaSwami-2016,Castillo-Costa-Ranga-Veronese-JAT2014,Costa-Felix-Ranga-JAT2013}, the double sequence $\{(c_n(\mu), g_{n}(\mu))\}_{n=1}^{\infty}$ is a parametrization of the measure $\mu$, alternative to its Verblunsky coefficients. Thus, a natural question is   the relation between the parameters $c_n(\mu)$, $g_n(\mu)$, associated with $\mu$, and the sequences $c_n(\nu)$,  $g_n(\nu)$, corresponding to $\nu$. These questions will be addressed in Section~\ref{Sec-ModifiedCDKernel}. Since the statement of the corresponding results requires introducing a considerable piece of notation, we postpone it to the aforementioned section.

Finally, in Section~\ref{sec:examples} we consider four different applications of our formulas: a rather straightforward case when $\mu$ is the Lebesgue measure on $\T$, the Geronimus weight (a measure supported on an arc of $\T$),  a class of measures given by basic hypergeometric functions, and a class of measures with hypergeometric OPUC.

%%%%%%%%%%%%%%%%%%%%%%%%%%%%%%%%%%%%%%%%%%%%%%%%%%%%%%%%%%%%%%%%%%%%%%%%%%%%%%

\section{Proof of the Christoffel formula for kernels}\label{sec:proofsKernels}
\setcounter{equation}{0}

First we discuss a characterization of the kernel polynomials $K_n$.

Let $w\in \C$ be fixed. With the positive measure $\mu$ on $\T$ we consider the complex-valued measure on $\T$ given by, 
$$
d\mu_w(\zeta) =  (1 -\overline{w} \zeta) d \mu(\zeta).
$$
%Obviously, $\mu_1$ is no longer real on $\mathbb T$.
The following simple lemma will be useful in the forthcoming proofs:
\begin{lema}\label{lemma:positive}
	Let  $f$ be an integrable function on $\T$ such that either one of the following condition is satisfied:
    \begin{enumerate}
\item[i)] $ |w|\leq 1$ and 
\begin{equation}\label{nullidentity}
	 \int  \left|f(\zeta) \right|  d \mu_w(\zeta) = 0,
	\end{equation}
    or
 \item[ii)] $ |w|\geq 1$ and 
\begin{equation}\label{nullidentity2}
	 \int \overline{\zeta} \left|f(\zeta) \right|  d \mu_w(\zeta) = 0.
	\end{equation} 
\end{enumerate}
	Then $f=0$ $\mu$-a.e.~(in case when $|w|\neq  1$) and $\mu\big|_{\T\setminus\{w\}}$-a.e., otherwise.
	
	In particular, if $f$ is a polynomial and $\mu$ has an infinite number of points of increase, then i) or ii) imply that $f\equiv 0$.
\end{lema}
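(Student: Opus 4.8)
The plan is to extract from either hypothesis an integral of a function of \emph{definite sign} against the positive measure $\mu$, and then invoke positivity to conclude. Writing $d\mu_w(\zeta) = (1 - \overline{w}\zeta)\,d\mu(\zeta)$, each hypothesis has the shape $\int |f(\zeta)|\, h_w(\zeta)\, d\mu(\zeta) = 0$ for a suitable factor $h_w$. Since this is a single scalar equation, I would take its real part --- possibly after multiplying through by a fixed nonzero constant --- so that $\Re h_w$ becomes a function that does not change sign on $\T$ (here the real part commutes with the integral because $\mu$ and $|f|$ are real). The vanishing of the integral of the non-negative (resp.\ non-positive) integrand $|f|\,\Re h_w$ then forces $|f|\cdot \Re h_w = 0$ $\mu$-a.e., and I would read off the conclusion from the zero set of $\Re h_w$.

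For case i) I would apply this to \eqref{nullidentity} directly, with $h_w(\zeta) = 1 - \overline{w}\zeta$. For $\zeta\in\T$ one has $\Re(1-\overline{w}\zeta) = 1 - \Re(\overline{w}\zeta) \geq 1 - |w| \geq 0$ when $|w|\leq 1$, so $|f(\zeta)|\,\Re(1-\overline{w}\zeta)\geq 0$ and hence vanishes $\mu$-a.e. When $|w|<1$ the factor is strictly positive on all of $\T$, forcing $f=0$ $\mu$-a.e.; when $|w|=1$ it vanishes exactly at $\zeta=w$ (equality in $\Re(\overline{w}\zeta)\leq|w|=1$ forces $\overline{w}\zeta=1$, i.e.\ $\zeta=w$), which yields $f=0$ $\mu\big|_{\T\setminus\{w\}}$-a.e.

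For case ii) the first step would be the identity $\overline{\zeta}\,(1-\overline{w}\zeta)=\overline{\zeta}-\overline{w}$, valid on $\T$ since $|\zeta|^2=1$, which turns \eqref{nullidentity2} into $\int |f(\zeta)|(\overline{\zeta}-\overline{w})\,d\mu(\zeta)=0$. As $|w|\geq1$ ensures $w\neq0$, I would multiply by $w$ and take real parts, using $\Re\big[w(\overline{\zeta}-\overline{w})\big]=\Re(w\overline{\zeta})-|w|^2\leq |w|-|w|^2=|w|(1-|w|)\leq 0$. The same positivity argument then applies (now with a non-positive integrand), strictness holding for $|w|>1$ and equality again pinning down $\zeta=w$ when $|w|=1$.

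For the polynomial assertion I would argue that a measure with infinitely many points of increase has infinite support, and that a polynomial $f$, being continuous and vanishing $\mu$-a.e.\ (resp.\ $\mu\big|_{\T\setminus\{w\}}$-a.e.), must in fact vanish at every point of increase: if $f(\zeta_0)\neq0$ at such a point, continuity gives $f\neq0$ on a neighborhood of positive $\mu$-measure, a contradiction. Deleting the single point $w$ still leaves infinitely many points of increase, so $f$ has infinitely many zeros and is identically $0$. The main subtlety I expect is the bookkeeping at the boundary case $|w|=1$: there the sign-definite factor $\Re h_w$ degenerates at precisely one point $\zeta=w$, and I must check that this is exactly the point excluded in the stated conclusion and that its removal does not spoil the infinitude of zeros needed for the polynomial corollary.
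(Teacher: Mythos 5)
Your proof is correct and follows essentially the same route as the paper's: both reduce each hypothesis to an integral of a sign-definite function against $\mu$ by taking real parts after a suitable normalization, and both identify $\zeta=w$ as the only degenerate point when $|w|=1$. The only differences are cosmetic (you take $\Re(1-\overline{w}\zeta)$ directly rather than first dividing by $\overline{w}$, and in case ii) you multiply by $w$ instead of conjugating and rotating by a unit phase), and your explicit continuity argument for the polynomial corollary fills in a step the paper leaves implicit.
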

\begin{proof}
Consider i) first. For $w=0$ the statement is trivial, so let $0<|w|\leq 1$. By assumptions of the lemma,
$$
	\int  \left|f(\zeta) \right| \left(\frac{1}{\overline{w}}-\zeta\right) d \mu(\zeta) = 0,
$$
and hence,
$$
\int  \left|f(\zeta) \right| \left(\frac{1}{|w|}-e^{i\theta}\zeta \right) d \mu(\zeta) = 0,\quad \theta=-\arg (w).
$$
In particular, taking the real part, we get
$$
\int  \left|f(\zeta) \right|\Re \left(\frac{1}{|w|}-e^{i\theta} \zeta \right) d \mu(\zeta) = 0,
$$
and it remains to notice that
$$
\Re \left(\frac{1}{|w|}-e^{i\theta} \zeta \right) >0,
$$
unless $|w|=1$ and $\zeta =w$.

In the case ii), we have that
$$
\int  \left|f(\zeta) \right| \left(\overline{w} -\overline{\zeta} \right) d \mu(\zeta) =\overline{\int   \left|f(\zeta) \right| \left( w-\zeta \right) d \mu(\zeta)} = 0,
$$
so that
$$
\int   \left|f(\zeta) \right| \left( |w|-e^{i\theta}\zeta \right) d \mu(\zeta)=0,
$$
and again,
$$
\Re \left(|w|-e^{i\theta} \zeta \right) >0,
$$
unless $|w|=1$ and $\zeta =w$.
\end{proof}

\begin{lema}\label{lemma:uniqueness}
	For a fixed $w\in \C$ and $n \in \N$, the CD kernel $K_n(z,w;\mu)$ is a polynomial in $z$ of degree $\leq n$, characterized up to a constant factor by the following orthogonality relations,
	\begin{equation} \label{Eq-Rn-Ortogonality}
	\int  \overline{\zeta^{s}}   K_{n}(\zeta,w;\mu )  d \mu_w(\zeta) = 0,\quad  1 \leq s \leq n,
	\end{equation} 
	and the additional condition
	\begin{enumerate}
		\item[a)] if $|w|\geq 1$, then $K_n(z,w;\mu )$ is of degree exactly $n$;
		\item[b)] if $|w|\leq 1$, then
$$
\int     K_n(\zeta,w;\mu )  d \mu_w(\zeta) \neq 0.
$$		
	\end{enumerate}  
\end{lema}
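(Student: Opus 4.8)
The plan is to check that $K_n(\cdot,w;\mu)$ enjoys every property in the statement and then to show that these properties pin it down up to a scalar, with Lemma~\ref{lemma:positive} doing the work in the uniqueness step. That $K_n(z,w;\mu)$ is a polynomial in $z$ of degree $\leq n$ is immediate from \eqref{defCD}. To get the orthogonality relations \eqref{Eq-Rn-Ortogonality} I would multiply the first identity in \eqref{CDformula} by $(1-\overline w z)$ to obtain
$$(1-\overline w\zeta)\,K_n(\zeta,w;\mu)=\overline{\varphi_{n+1}^{\ast}(w;\mu)}\,\varphi_{n+1}^{\ast}(\zeta;\mu)-\overline{\varphi_{n+1}(w;\mu)}\,\varphi_{n+1}(\zeta;\mu),$$
so that $K_n(\zeta,w;\mu)\,d\mu_w(\zeta)$ is a fixed linear combination of $\varphi_{n+1}^{\ast}$ and $\varphi_{n+1}$ integrated against $d\mu$. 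Using $\varphi_{n+1}^{\ast}(\zeta)=\zeta^{n+1}\overline{\varphi_{n+1}(\zeta)}$ on $\T$, the pairing $\int\overline{\zeta^{s}}\varphi_{n+1}^{\ast}\,d\mu$ equals $\overline{\int\overline{\zeta^{\,n+1-s}}\varphi_{n+1}\,d\mu}$, and this, together with $\int\overline{\zeta^{s}}\varphi_{n+1}\,d\mu$, vanishes for $1\leq s\leq n$ by orthogonality of $\varphi_{n+1}$ to the monomials of degree $0,\dots,n$. This gives \eqref{Eq-Rn-Ortogonality}.

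The same computation at $s=0$ yields $\int K_n(\zeta,w;\mu)\,d\mu_w(\zeta)=\overline{\varphi_{n+1}^{\ast}(w;\mu)}/\kappa_{n+1}$, using $\int\varphi_{n+1}^{\ast}\,d\mu=\overline{\langle\varphi_{n+1},\zeta^{n+1}\rangle}=1/\kappa_{n+1}$; since the zeros of $\varphi_{n+1}^{\ast}$ all lie in $\{|z|>1\}$, this is nonzero for $|w|\leq1$, proving b). Similarly the coefficient of $z^{n}$ in $K_n(\cdot,w;\mu)$ is $\kappa_n\overline{\varphi_n(w;\mu)}$, which is nonzero for $|w|\geq1$ because the zeros of $\varphi_n$ lie in $\D$; this proves a). For the uniqueness I would take any polynomial $P$ of degree $\leq n$ satisfying \eqref{Eq-Rn-Ortogonality} together with the pertinent normalization, and set $D:=P-c\,K_n(\cdot,w;\mu)$ with $c$ chosen to kill the one unconstrained direction: in case b) ($|w|\leq1$) I pick $c$ with $\int D\,d\mu_w=0$, which is possible because $\int K_n\,d\mu_w\neq0$; in case a) ($|w|\geq1$) I pick $c$ to cancel the leading coefficients, so that $\deg D\leq n-1$. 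In either case $D$ still satisfies \eqref{Eq-Rn-Ortogonality}. The crucial point is that $|D|^{2}=|D^{2}|$ with $D^{2}$ a polynomial, so Lemma~\ref{lemma:positive} applies to $f=D^{2}$. Expanding $\overline{D}$ in monomials and invoking \eqref{Eq-Rn-Ortogonality}, in case b) every term of $\int|D|^{2}\,d\mu_w$ except the $s=0$ term drops out, and that term vanishes because $\int D\,d\mu_w=0$; in case a), writing $\overline{\zeta}\,\overline{D}$ as a combination of $\overline{\zeta^{1}},\dots,\overline{\zeta^{n}}$ (legitimate since $\deg D\leq n-1$) shows $\int\overline{\zeta}\,|D|^{2}\,d\mu_w=0$. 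Hence part i) of Lemma~\ref{lemma:positive} (for $|w|\leq1$), respectively part ii) (for $|w|\geq1$), forces $D^{2}\equiv0$, so $D\equiv0$ and $P=c\,K_n(\cdot,w;\mu)$.

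The bookkeeping in the verification paragraph is routine; the step I expect to be the crux is the uniqueness, and specifically the idea of feeding $f=D^{2}$ into Lemma~\ref{lemma:positive}, which turns the family of linear orthogonality conditions into a single positivity statement. The delicate matching is that the two normalizations a) and b) are precisely what annihilate the one surviving term of $\int|D|^{2}\,d\mu_w$ (resp.\ $\int\overline{\zeta}\,|D|^{2}\,d\mu_w$), so that exactly case i) (resp.\ ii)) of the lemma becomes applicable. I would also remark on the consistency along $|w|=1$, where a) and b) hold simultaneously and either reduction closes the argument.
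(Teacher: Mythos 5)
Your proposal is correct, and most of it coincides with the paper's own proof: the verification half (degree $\leq n$ from \eqref{defCD}, orthogonality via \eqref{CDformula} and the reversed-polynomial relations, the leading coefficient $\kappa_n\overline{\varphi_n(w;\mu)}$ for a), and $\int K_n\,d\mu_w=\overline{\varphi_{n+1}^{\ast}(w;\mu)}/\kappa_{n+1}$ for b)) is the same, and your case a) of the uniqueness --- reduce to $\deg D\leq n-1$, deduce $\int\overline{\zeta}\,|D|^2\,d\mu_w=0$ from \eqref{Eq-Rn-Ortogonality}, apply Lemma~\ref{lemma:positive}, ii) --- is literally the paper's argument. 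The one genuine divergence is case b) ($|w|\leq 1$), where the paper does not invoke Lemma~\ref{lemma:positive} at all: from the full set of relations $\int\overline{\zeta^s}\,(P-c\,K_n)\,d\mu_w=0$, $s=0,1,\dots,n$, it concludes that $\bigl(P(\zeta)-c\,K_n(\zeta,w;\mu)\bigr)(1-\overline{w}\zeta)$ is a constant multiple of $\varphi_{n+1}(\zeta;\mu)$, and then kills the constant because $\varphi_{n+1}$ cannot vanish at $1/\overline{w}\notin\D$. You instead run the same positivity mechanism as in case a): choose $c$ so that the $s=0$ term of $\int|D|^2\,d\mu_w$ drops out, and feed $f=D^2$ (so that $|f|=|D|^2$ is the quantity controlled by the lemma) into Lemma~\ref{lemma:positive}, i). Your route is more uniform --- one lemma closes both cases --- and it is insensitive to $w=0$, where the paper's evaluation point $1/\overline{w}$ does not exist and one must fall back on a degree count (a polynomial of degree $\leq n$ cannot be a nonzero multiple of $\varphi_{n+1}$). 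What the paper's route buys in exchange is the explicit identification of the defect polynomial with $\varphi_{n+1}$, a device it reuses later: the same evaluation trick, with $\varphi_{n+1}(\cdot;\nu)$, reappears in the proof of Lemma~\ref{lemma3.2}, ii). Both arguments are complete.
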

\begin{proof}
	Orthogonality conditions \eqref{Eq-Rn-Ortogonality} are a straightforward consequence of \eqref{CDformula} and of the well-known relations for the reversed polynomials,
	$$
	\int  \overline{\zeta^{s}}   \varphi_n^*(\zeta;\mu )  d \mu(\zeta) = 0,\quad  1 \leq s \leq n, \quad \text{and} \quad \int    \varphi_n^*(\zeta;\mu )  d \mu(\zeta) \neq 0.
	$$
	Since for $|w|\geq 1$, $\varphi_n(w;\mu )\neq 0$, using the definition \eqref{defCD} of $K_n(z,w;\mu )$ we conclude that it is a polynomial in $z$ of degree $=n$. For $|w|\leq 1$, using \eqref{CDformula} and the fact that $ \varphi_{n+1}^*$ does not vanish inside or on the unit disk, we see that
	\begin{equation*}
		\begin{split}
		\int    K_n(\zeta,w;\mu )  d \mu_w(\zeta) &=\int    K_n(\zeta,w;\mu )   (1-\overline{w} \zeta) d \mu(\zeta)\\
		& =\overline{\varphi_{n+1}^{\ast}(w;\mu )}\,\int    \varphi_{n+1}^*(\zeta;\mu )  d \mu(\zeta) \neq 0.
		\end{split}
	\end{equation*}

	Let us prove that these relations characterize the CD kernel.
	
	Assume that $|w|\geq 1$. If $P$ is a polynomial of degree exactly $n$, satisfying 
	\begin{equation} \label{Eq-Rn-Oalt}
	\int  \overline{\zeta^{s}}   P(\zeta)  d \mu_w(\zeta) = 0,\quad  1 \leq s \leq n,
	\end{equation} 
	then there exists a constant $c \neq 0$ such that  $L(z):= c K_n(z,w; \mu)- P(z)$ is of degree $\leq n-1$. By hypothesis, $L$ satisfies the same orthogonality conditions, so that
	\begin{align*}
	0 = & \int  \overline{\zeta L(\zeta)}   L(\zeta)  d \mu_w(\zeta) =\int  \overline{\zeta}  |L(\zeta)|^2 \mu_w(\zeta),
\end{align*}
and it remains to apply Lemma~\ref{lemma:positive}, ii), to conclude that $L\equiv 0$. 

%If on the contrary, let $|w|< 1$. 

On the other hand, if  $P$ of degree $\leq n$ satisfies \eqref{Eq-Rn-Oalt} and is such that 
$$
\int    P(\zeta)  d \mu_w(\zeta) \neq 0,
$$
then there exists a non-zero constant, let us denote it by $c$ again, such that
$$
\int   ( P(\zeta)- c\, K_n(\zeta,w; \mu))  d \mu_w(\zeta) = 0.
$$
Combining it with \eqref{Eq-Rn-Ortogonality} and \eqref{Eq-Rn-Oalt} we get that for $s=0, 1, \dots, n$,
\begin{equation*}
\begin{split}
\int   \overline{\zeta^{s}} ( P(\zeta)- c\, K_n(\zeta,w; \mu))  d \mu_w(\zeta) =\int   \overline{\zeta^{s}} ( P(\zeta)- c\, K_n(\zeta,w; \mu)) (1-\overline{w} \zeta) d \mu(\zeta) = 0. 
\end{split}
\end{equation*}
It means that
$$
( P(\zeta)- c\, K_n(\zeta,w; \mu))(1-\overline{w} \zeta)=\const \varphi_{n+1}(\zeta; \mu).
$$
Since $\varphi_{n+1}$ cannot vanish at $1/\overline{w}$, we conclude that $P(\zeta)\equiv c\, K_n(\zeta,w; \mu)$.
\end{proof}

\medskip

In order to prove  Theorem~\ref{thm1} we need some preparatory steps.

With the notation of Section~\ref{sec:intro}, it is immediate to check that  $\det \bm Q(z,w)$ is an algebraic polynomial in $z$ (of degree $\leq (n+2m)$) and  in $\overline{w}$ (of degree $\leq (m + n)( 2 m+1) $). Furthermore, for each $w\in \C$ it  vanishes at the zeros $z_1, \dots, z_{2m}$ of $G_{2m}$. Thus, we can write
\begin{equation}\label{definitionofA}
\det \bm Q(z,w) =G_{2m}(z)A_n(z,w) ,
\end{equation}
where $A_n$ is an algebraic polynomial in $z$ (of degree $\leq n$) and in $\overline{w}$.
We need to show that for each $w\in \C$, there exists a constant $C$ such that
\begin{equation}\label{identityforA}
A_n(z,w) =C  \,  K_n(z,w; \nu).
\end{equation}
If this is established, the polynomial dependence of $C$ from $\overline{w}$ (as well as on the admissible set $\mathcal P$ chosen, see Definition~\ref{def:admissible}) is a straightforward consequence of \eqref{definitionofA}--\eqref{identityforA}.
 
We prove \eqref{identityforA} by appealing to the characterization of $K_n$ given in Lemma~\ref{lemma:uniqueness}. 

By \eqref{Eq-Rn-Ortogonality} and the definition of $\nu$, kernels $K_n(z, w; \nu)$ satisfy
\begin{equation*}
%\label{TildaOrtogonality}
\int  \overline{\zeta^{s}}\,  G_{2m}(\zeta) K_{n}(\zeta,w;\nu )  d \mu_w(\zeta) = 0, \quad  m+1 \leq s \leq m+n.
\end{equation*}
Thus, a \textit{necessary} condition for \eqref{identityforA} is that 
\begin{equation*}
%\label{OrthDet0}
\int  \overline{\zeta^{s}}\,  G_{2m}(\zeta) A_n(\zeta)  d \mu_w(\zeta) = 0, \quad  m+1 \leq s \leq m+n.
\end{equation*}
This is always true, and it is an immediate consequence of the following lemma:
\begin{lema}\label{lemma11}
	For $j=0, 1, \dots, 2m$,
	\begin{equation}
	\label{termsBis}
	\int  \overline{\zeta^{s}}  Q_j(\zeta, w) d \mu_w(\zeta) =0,   \quad s= m+1, \dots, m+n.
	\end{equation}
	Thus, 
	\begin{equation}
	\label{OrthDet}
	\int  \overline{\zeta^{s}}  \det \bm Q(\zeta, w)  d \mu_w(\zeta) = 0, \quad  m+1 \leq s \leq m+n.
	\end{equation}
	\end{lema}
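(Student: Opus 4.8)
The plan is to reduce the determinantal identity \eqref{OrthDet} to the single-term relation \eqref{termsBis} by a cofactor expansion, so that the real work lies in proving \eqref{termsBis}. Expanding $\det \bm Q(\zeta, w)$ along its first row gives
$$\det \bm Q(\zeta, w) = \sum_{j=0}^{2m} Q_j(\zeta, w)\, c_j(w),$$
where each $c_j(w)$ is the corresponding cofactor, a determinant assembled solely from the entries $Q_i(z_k, w)$ with $k=1, \dots, 2m$, and hence a quantity independent of $\zeta$. By linearity of the integral, \eqref{OrthDet} follows at once from \eqref{termsBis} applied termwise.

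To establish \eqref{termsBis}, I would first recall that on $\T$ one has $\overline{\zeta}=\zeta^{-1}$, so that $\overline{\zeta^{s}}\,\zeta^{i}=\overline{\zeta^{\,s-i}}$ whenever $s\geq i$. Inserting the defining expansion \eqref{def-pj},
$$p_j(\zeta)=\sum_{i=\max\{0,\, j-m\}}^{\min\{j,m\}} b_{ij}\,\zeta^{i},$$
into $Q_j(\zeta,w)=p_j(\zeta)\,K_{n+2m-j}(\zeta,w;\mu)$ yields
$$\int \overline{\zeta^{s}}\, Q_j(\zeta, w)\, d\mu_w(\zeta) = \sum_{i=\max\{0,\, j-m\}}^{\min\{j,m\}} b_{ij}\int \overline{\zeta^{\,s-i}}\, K_{n+2m-j}(\zeta, w; \mu)\, d\mu_w(\zeta).$$
Each integral on the right vanishes by the orthogonality relation \eqref{Eq-Rn-Ortogonality} of Lemma~\ref{lemma:uniqueness}, applied to the kernel $K_{n+2m-j}$, \emph{provided} the shifted exponent obeys $1\leq s-i\leq n+2m-j$.

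The crux of the argument, and the step where the structure prescribed by admissibility is essential, is verifying that this orthogonality window is respected for every $i$ in the range $\max\{0,j-m\}\leq i\leq \min\{j,m\}$ and every $s$ with $m+1\leq s\leq m+n$. For the lower bound the worst case is $s=m+1$, $i=\min\{j,m\}$, which gives $s-i=(m+1)-\min\{j,m\}\geq 1$; for the upper bound the worst case is $s=m+n$, $i=\max\{0,j-m\}$, which gives $s-i=(m+n)-\max\{0,j-m\}\leq n+2m-j$, with equality precisely when $j\geq m$. Hence both inequalities hold throughout, every summand vanishes, \eqref{termsBis} follows, and \eqref{OrthDet} is then immediate from the cofactor expansion.

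I expect the only genuinely delicate point to be this index bookkeeping. The degree of the kernel is deliberately chosen as $n+2m-j$ so that the shift in orthogonality caused by multiplying by a monomial $\zeta^{i}$ of degree at most $\min\{j,m\}$ is exactly absorbed, keeping $s-i$ inside the window $[1,\,n+2m-j]$ across the full middle range $m+1\leq s\leq m+n$. It is worth noting that none of the supplementary conditions \eqref{assumption1}--\eqref{assumption2} enters here: the relation \eqref{termsBis} uses only the span of degrees of $p_j$ fixed by \eqref{def-pj}, and those conditions are reserved for matching the boundary requirements a) and b) of Lemma~\ref{lemma:uniqueness} at the later stage of the argument.
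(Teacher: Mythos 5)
Your proposal is correct and follows essentially the same route as the paper: expand $p_j$ via \eqref{def-pj}, reduce each term to an integral of the form $\int \overline{\zeta^{\,s-i}}\,K_{n+2m-j}(\zeta,w;\mu)\,d\mu_w(\zeta)$, and verify the index window $1\leq s-i\leq n+2m-j$ exactly as the paper does, then obtain \eqref{OrthDet} by the cofactor expansion that the paper leaves implicit. Your closing observation that the admissibility conditions \eqref{assumption1}--\eqref{assumption2} play no role here, only the degree span in \eqref{def-pj}, is also consistent with the paper's structure.
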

	\begin{proof}
In order to calculate
\begin{equation}
\label{terms}
		\int  \overline{\zeta^{s}}\,  p_j(\zeta) K_{n+2m-j}(\zeta,w; \mu)  d \mu_w(\zeta)    \quad \text{for } m+1 \leq s \leq m+n, \quad 0\leq j \leq 2m,
		\end{equation}
with account of \eqref{def-pj}, it is sufficient to find the values of
		\begin{equation}
		\label{TildaOrtogonalityMod}
		\int  \overline{\zeta^{s-r}} \,   K_{n+2m-j}(\zeta,w; \mu)  d \mu_w(\zeta)   \quad  \text{for } m+1 \leq s \leq m+n,  \quad 0\leq j \leq 2m,
		\end{equation}
for $  \max\{ 0, j-m\}\leq r \leq \min \{j,m\}$. 
		
Since $   r \leq \min \{j,m\}\leq m $, we get
		$$
		s-r\geq m+1-r\geq 1.
		$$
		Analogously,  from $   r \geq \max\{ 0, j-m\}\geq j-m $, we conclude that
		$$
		s-r\leq m+n-r\leq n+2m-j.
		$$
		By \eqref{Eq-Rn-Ortogonality} it follows that all integrals in \eqref{TildaOrtogonalityMod} (and consequently, in \eqref{terms}) vanish, which yields \eqref{termsBis}--\eqref{OrthDet}.
		\end{proof}

So, the necessary condition (orthogonality) always holds. Now we go for a sufficient condition, given by a) and b) of Lemma~\ref{lemma:uniqueness}. 

Recall first the following well known fact, that we state just as a remark.

\begin{remark}\label{remarkzeros}
	By the maximum principle, $|\varphi^* _n(z; \mu)|>|\varphi_n(z; \mu)|$ for $|z|<1$,  and $|\varphi^* _n(z; \mu)|<|\varphi_n(z; \mu)|$ for $|z|>1$. By \eqref{CDformula}, 
	$$
	K_{n}(z,w; \mu)=0 \quad \Leftrightarrow \quad \overline{\varphi_{n+1}(w; \mu)}\,\varphi_{n+1}(z; \mu)- \overline{\varphi_{n+1}^{\ast}(w; \mu)}\,\varphi_{n+1}^{\ast}(z; \mu)=0,
	$$
	which for $|w|>1$ can be rewritten as
	$$
	\frac{\varphi_{n+1}(z; \mu)}{\varphi_{n+1}^{\ast}(z; \mu)}= \overline{\left(\frac{\varphi_{n+1}^{\ast}(w; \mu) }{\varphi_{n+1}(w; \mu)}\right)}.
	$$
	The right hand side is of absolute value $<1$, so that equality is possible only for $|z|<1$. Same analysis is valid for the other case and we conclude that the zeros of $K_{n}(z,w; \mu)$ (as a polynomial of $z$) are of absolute value $>1$ (respectively, $=1$ or $<1$) if $|w|<1$ (respectively, $|w|=1$ or $|w|<1$).
\end{remark}	

\begin{lema}\label{lemma3.2}
	Let $A_n$ be defined by \eqref{definitionofA} and $w\in \C$ fixed. 	The following conditions are necessary and sufficient for $A_n(\cdot,w)\equiv 0$:
	\begin{enumerate}
		\item[i)]  $|w|\geq 1$ and
		\begin{equation}\label{equivalence1}
		\deg (\det \bm Q(\cdot,w)) <n+2m; %\quad \Leftrightarrow \quad A_n(\cdot,w)\equiv 0.
		\end{equation}
		\item[ii)]  $|w|\leq 1$ and
		\begin{equation}\label{equivalence4}
		\int \overline{\zeta^m}\det \bm Q(\zeta, w) \, d\mu_w(\zeta)= 0. %\quad \Leftrightarrow \quad .
		\end{equation}
	\end{enumerate}
\end{lema}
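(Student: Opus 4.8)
The plan is to verify, via the characterization of Lemma~\ref{lemma:uniqueness} applied to the measure $\nu$, that $A_n(\cdot,w)$ is a constant multiple of $K_n(\cdot,w;\nu)$ precisely when it does not degenerate, and then to identify that degeneration with conditions i) and ii). The starting point is that, since $G_{2m}$ is self-reciprocal of exact degree $2m$, on $\T$ one has $G_{2m}(\zeta)=\zeta^{m}|G_{2m}(\zeta)|$, whence $|G_{2m}(\zeta)|\,d\mu_w(\zeta)=(1-\overline{w}\zeta)\,d\nu(\zeta)=:d\nu_w(\zeta)$. Using $\det\bm Q=G_{2m}A_n$ from \eqref{definitionofA} together with the orthogonality \eqref{OrthDet} of Lemma~\ref{lemma11}, the substitution $t=s-m$ turns $\int\overline{\zeta^{s}}\det\bm Q(\zeta,w)\,d\mu_w(\zeta)=0$, $m+1\le s\le m+n$, into $\int\overline{\zeta^{t}}A_n(\zeta,w)\,d\nu_w(\zeta)=0$, $1\le t\le n$. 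Thus $A_n(\cdot,w)$ already satisfies exactly the orthogonality relations \eqref{Eq-Rn-Ortogonality} written for $K_n(\cdot,w;\nu)$, and it remains only to control the supplementary conditions a) and b) of Lemma~\ref{lemma:uniqueness}, now read off for $\nu$.

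For case i), $|w|\ge 1$, I would first note that, since $\deg_z G_{2m}=2m$ exactly, the inequality $\deg(\det\bm Q(\cdot,w))<n+2m$ is equivalent to ``$A_n(\cdot,w)\equiv 0$ or $\deg A_n<n$''. So the real content is that, for $|w|\ge 1$, a nonzero $A_n$ of degree $<n$ cannot satisfy the orthogonality above. Assuming $\deg A_n\le n-1$, the conjugate $\overline{\zeta A_n(\zeta)}$ is a linear combination of $\overline{\zeta^{t}}$ with $1\le t\le n$, so the orthogonality yields $\int\overline{\zeta}\,|A_n(\zeta,w)|^{2}\,d\nu_w(\zeta)=0$. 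Applying Lemma~\ref{lemma:positive}, case ii), with $\mu$ replaced by $\nu$ (which again has infinitely many points of increase, as $|G_{2m}|>0$ $\mu$-a.e.\ on $\supp(\mu)$) forces $A_n\equiv 0$. Hence $\deg(\det\bm Q(\cdot,w))<n+2m\Leftrightarrow A_n(\cdot,w)\equiv 0$, which is i).

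For case ii), $|w|\le 1$, the same identity $G_{2m}(\zeta)=\zeta^{m}|G_{2m}(\zeta)|$ on $\T$ gives $\int\overline{\zeta^{m}}\det\bm Q(\zeta,w)\,d\mu_w(\zeta)=\int A_n(\zeta,w)\,d\nu_w(\zeta)$, so condition \eqref{equivalence4} is simply $\int A_n(\zeta,w)\,d\nu_w(\zeta)=0$. Combining this with the orthogonality $\int\overline{\zeta^{t}}A_n\,d\nu_w=0$, $1\le t\le n$, extends the range to $0\le t\le n$; since $\deg A_n\le n$, the conjugate $\overline{A_n}$ is spanned by these $\overline{\zeta^{t}}$, and therefore $\int|A_n(\zeta,w)|^{2}\,d\nu_w(\zeta)=0$. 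Lemma~\ref{lemma:positive}, case i), again for $\nu$, then gives $A_n\equiv 0$. The converse implications in both cases are immediate, which completes the two equivalences.

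The main obstacle, and the place where the self-reciprocity of $G_{2m}$ is essential, is the bookkeeping that converts $\mu_w$-orthogonality of the high-degree object $\det\bm Q$ into $\nu_w$-orthogonality of the reduced polynomial $A_n$: one must check that the exponent shift by $m$ exactly absorbs the factor $\zeta^{m}$ produced by $G_{2m}(\zeta)=\zeta^{m}|G_{2m}(\zeta)|$, and that the range of $s$ in Lemma~\ref{lemma11} translates precisely into $1\le t\le n$ (respectively $0\le t\le n$) so that $A_n$ can be squared against itself. Once this alignment is in place, everything else reduces to the positivity argument of Lemma~\ref{lemma:positive}.
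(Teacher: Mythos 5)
Your proof is correct. Part i) is essentially the paper's own argument: your integral $\int\overline{\zeta}\,|A_n(\zeta,w)|^{2}\,d\nu_w(\zeta)$ is literally the same as the paper's $\int\overline{\zeta}\,|A_n(\zeta,w)|^{2}|G_{2m}(\zeta)|\,d\mu_w(\zeta)$, so transporting everything to the measure $d\nu_w=(1-\overline{w}\zeta)\,d\nu$ is a purely notational change, and both proofs finish with Lemma~\ref{lemma:positive}, ii). Part ii), however, takes a genuinely different route. The paper, after extending the orthogonality to the exponent $s=0$, identifies the degree-$(n+1)$ polynomial $A_n(\zeta,w)(1-\overline{w}\zeta)$ as a constant multiple of $\varphi_{n+1}(\zeta;\nu)$ and kills the constant by the fact that the zeros of OPUC lie in $\D$ while $1/\overline{w}\notin\D$ (an argument which, as written, tacitly assumes $w\neq 0$). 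You instead observe that once the orthogonality holds for $0\leq t\leq n$ and $\deg A_n(\cdot,w)\leq n$, one may pair $A_n$ against its own conjugate to obtain $\int|A_n(\zeta,w)|^{2}\,d\nu_w(\zeta)=0$, and then invoke Lemma~\ref{lemma:positive}, i). This buys a pleasant uniformity: both halves of the lemma reduce to the same positivity lemma, no property of $\varphi_{n+1}(\cdot;\nu)$ is needed, and $w=0$ requires no separate treatment. Two small points to tighten. First, the identity $G_{2m}(\zeta)=\zeta^{m}|G_{2m}(\zeta)|$ holds on $\supp(\mu)$, where $G_{2m}(\zeta)/\zeta^{m}\geq 0$ by hypothesis, not on all of $\T$ (self-reciprocity only makes $G_{2m}(\zeta)/\zeta^{m}$ real there, and the paper explicitly allows it to be negative off $\supp(\mu)$); this is harmless in your argument since the identity is only ever used under integrals against $d\mu_w$. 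Second, your opening appeal to Lemma~\ref{lemma:uniqueness} is never actually used (nor needed): what you prove directly is the equivalence of i) and ii) with $A_n(\cdot,w)\equiv 0$, which is exactly the statement.
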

\begin{proof}
	$A_n(\cdot,w)\equiv 0$ is clearly a sufficient condition in \emph{i)} and \emph{ii)} for  \eqref{equivalence1} and \eqref{equivalence4}, respectively. So, we prove that this is also necessary.
	
	For \textit{i)}, if $\deg A_n(\cdot,w)< n$ (which is equivalent to $\deg (\det \bm Q(\cdot,w)) <n+2m$), then by \eqref{OrthDet},
	$$
	0 = \int  \overline{\zeta^{m+1} A_n(\zeta, w)}  \det \bm Q(\zeta,w )  d \mu_w(\zeta) = \int \overline{\zeta} |A_n(\zeta,w)|^2 |G_{2m}(\zeta)|  d \mu_w(\zeta),
	$$
	and we use again Lemma~\ref{lemma:positive}, ii), to conclude that $A_n(\cdot,w)\equiv 0$.
		
	For \textit{ii)}, if 
	$$
	\int \overline{\zeta^m}\det \bm Q(\zeta,w) \, d\mu_w(\zeta)=\int A_n(\zeta,w) |G_{2m}(\zeta)|\, d\mu_w(\zeta)=0,
	$$
	then by \eqref{OrthDet} we have in fact that for $s=0, 1, \dots, n$, 
	$$
	\int \overline{\zeta^s} A_n(\zeta, w) |G_{2m}(\zeta)|\, d\mu_w(\zeta)=\int \overline{\zeta^s} A_n(\zeta, w)(1-\overline{w} \zeta) |G_{2m}(\zeta)|\, d\mu(\zeta)= 0.
	$$
	Since $\deg A_n(\cdot, w)\leq n$, we conclude that
	$$
	A_n(\zeta,w)(1-\overline{w} \zeta) = c\, \varphi_{n+1}(\zeta;\nu),
	$$
	But $\varphi_{n+1}(\zeta;\nu)$ cannot vanish at $\zeta=1/\overline{w}\notin \D$, which implies that $c=0$, and $A_n(\cdot, w)\equiv 0$.
	\end{proof}

A combination of Lemmas~\ref{lemma11} and \ref{lemma3.2} constitutes the proof of Theorem~\ref{thm1}. Indeed, let $|w|\geq 1$. Consider the identity \eqref{definitionofA}; by Lemma~\ref{lemma3.2}, i), either $A_n(\cdot, w) \equiv 0 $ or $\deg A_n(\cdot, w)=n$. In the latter case by the characterization in Lemma~\ref{lemma:uniqueness}, a), 
$$
A_n(z,w) = C K_{n}(\zeta,w;\nu ) .
$$
%$A_n$ coincides, up to a constant factor, with $\widetilde{K}_n(\cdot,w)$. 
On the other hand, if $|w|\leq 1$, then again by Lemma~\ref{lemma3.2}, ii), either $A_n \equiv 0 $ or
$$
\int A_n(\zeta) |G_{2m}(\zeta)|\, d\mu_w(\zeta)\neq 0,
$$
in which case by Lemma~\ref{lemma:uniqueness}, b), $A_n$ coincides, up to a constant factor, with $K_{n}(\cdot,w;\nu )$.

Now we turn to Theorem~\ref{thm1bis}.

Checking \eqref{equivalence1} or \eqref{equivalence4} is not straightforward. Seeking a more explicit algebraic condition,  we introduce a notation for the minors of the matrix $\bm Q$: the one, obtained by deleting its first row and column,
\begin{equation} \label{defDelta}
\Delta_0(w) = \det \begin{pmatrix}
  Q_1(z_1, w) & \dots & Q_{2m}(z_1, w) \\
\vdots &\ddots &\vdots  \\
 Q_1(z_{2m}, w) & \dots & Q_{2m}(z_{2m}, w)
\end{pmatrix},
\end{equation}
and the minor obtained from $\bm Q$ by deleting its first row and its last column,
\begin{equation} \label{defDeltam}
\Delta_m(w) = \det \begin{pmatrix}
Q_0(z_1, w) & \dots & Q_{2m-1}(z_1, w) \\
\vdots &\ddots &\vdots  \\
Q_0(z_{2m}, w) & \dots & Q_{2m-1}(z_{2m}, w)
\end{pmatrix}.
\end{equation}
%For a function $F(z,w)$ and $w\in \C$ fixed we denote by $\deg_z(F)$ its degree as a polynomial in the variable $z$.
		
\begin{lema}\label{lemma3.2bis}
Let $A_n$ be defined by \eqref{definitionofA} and $w\in \C$ fixed, and let either one of the following conditions hold:
\begin{itemize}
	\item \eqref{assumption1} with $|w|\geq 1$; 
	\item \eqref{assumption2} with $|w|\leq 1$.
\end{itemize}  %if either \eqref{assumption1} and $|w|\geq 1$, or \eqref{assumption2} and $|w|\leq 1$, then
Then
$$
A_n(\cdot,w)\equiv 0  \quad \Leftrightarrow\quad \Delta_0(w)= 0.
$$

On the other hand, if condition \eqref{assumption3} holds with $|w|\leq 1$, then 
$$
A_n(\cdot,w) \equiv 0 \quad \Leftrightarrow\quad \Delta_m(w)= 0.
$$

\end{lema}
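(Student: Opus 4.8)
The plan is to expand $\det\bm Q(z,w)$ along its first row,
\begin{equation*}
\det\bm Q(z,w)=\sum_{j=0}^{2m}(-1)^{j}\,Q_j(z,w)\,M_j(w),
\end{equation*}
where $M_j(w)$ is the minor obtained by deleting the first row and the $j$-th column; in particular $M_0(w)=\Delta_0(w)$ and $M_{2m}(w)=\Delta_m(w)$. Throughout I would use $A_n(z,w)=C_n(\overline w)\,K_n(z,w;\nu)$, which follows from Theorem~\ref{thm1} combined with \eqref{definitionofA}, so that $A_n(\cdot,w)\equiv0$ is equivalent to $C_n(\overline w)=0$. The unifying idea is to apply to both representations of $\det\bm Q$ in \eqref{definitionofA} a single linear functional in $z$ that, on the determinant side, isolates the relevant minor, and, on the $G_{2m}A_n$ side, produces a \emph{nonzero} multiple of a corresponding ``coordinate'' of $K_n(\cdot,w;\nu)$; the claimed equivalence then reads off immediately.

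For \eqref{assumption1} with $|w|\ge1$ the functional is extraction of the leading coefficient. Since $\deg p_j\le j-1$ for $j\ge1$ while $\deg K_{n+2m-j}(\cdot,w;\mu)=n+2m-j$ (Lemma~\ref{lemma:uniqueness}, a)), only the term $j=0$ reaches degree $n+2m$, so the coefficient of $z^{n+2m}$ in $\det\bm Q$ equals $\kappa_{n+2m}(\mu)\,\overline{\varphi_{n+2m}(w;\mu)}\,\Delta_0(w)$, with a prefactor that is nonzero by Remark~\ref{remarkzeros}. Hence $\deg(\det\bm Q(\cdot,w))<n+2m\iff\Delta_0(w)=0$, and Lemma~\ref{lemma3.2}, i), closes the case. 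For \eqref{assumption3} with $|w|\le1$ the functional is $P\mapsto\int\overline{\zeta^{m}}P(\zeta)\,d\mu_w(\zeta)$. Setting $I_j:=\int\overline{\zeta^{m}}Q_j(\zeta,w)\,d\mu_w(\zeta)$ and using $\overline{\zeta^{m}}\zeta^{i}=\overline{\zeta^{m-i}}$ on $\T$ (valid since $i\le m$), the orthogonality \eqref{Eq-Rn-Ortogonality} forces $I_j=0$ for $0\le j\le 2m-1$: every monomial $\zeta^i$ of $p_j$ has $\max\{0,j-m\}\le i\le m-1$, whence $1\le m-i\le n+2m-j$. Meanwhile $I_{2m}=\int K_n(\zeta,w;\mu)\,d\mu_w(\zeta)\neq0$ by Lemma~\ref{lemma:uniqueness}, b). Thus $\int\overline{\zeta^{m}}\det\bm Q\,d\mu_w=I_{2m}\Delta_m(w)$, and Lemma~\ref{lemma3.2}, ii), gives $A_n(\cdot,w)\equiv0\iff\Delta_m(w)=0$.

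The case \eqref{assumption2} with $|w|\le1$ is the crux and the step I expect to be the main obstacle. The natural functional supplied by Lemma~\ref{lemma3.2}, ii), namely $\int\overline{\zeta^{m}}(\cdot)\,d\mu_w$, does \emph{not} isolate $\Delta_0$: repeating the computation above, the monomial $\zeta^{m}$ (i.e. $i=m$) now survives for every $j\ge m$, so one only obtains a nonzero linear combination of $M_m(w),\dots,M_{2m}(w)$, which is useless. The device I would use instead is evaluation at $z=0$. Under \eqref{assumption2} one has $p_j(0)=0$ for $j\ge1$, so the first row of $\bm Q(0,w)$ collapses to $(K_{n+2m}(0,w;\mu),0,\dots,0)$ and therefore $\det\bm Q(0,w)=K_{n+2m}(0,w;\mu)\,\Delta_0(w)$. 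Comparing this with $\det\bm Q(0,w)=G_{2m}(0)A_n(0,w)$ (and $G_{2m}(0)\neq0$), the proof reduces to the nonvanishing, for $|w|\le1$, of the value at $z=0$ of the kernels: using $K_N(0,w;\mu)=\overline{K_N(w,0;\mu)}$ and $K_N(z,0;\mu)=\overline{\varphi_N^{*}(0;\mu)}\varphi_N^{*}(z;\mu)$ one gets $K_N(0,w;\mu)=\kappa_N(\mu)\,\overline{\varphi_N^{*}(w;\mu)}\neq0$ (and likewise $K_n(0,w;\nu)\neq0$), since $\varphi_N^{*}$ has no zeros in $\overline{\D}$. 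Then $A_n(0,w)=C_n(\overline w)K_n(0,w;\nu)$ with $K_n(0,w;\nu)\neq0$ yields $A_n(0,w)=0\iff A_n(\cdot,w)\equiv0$, and combining the two expressions for $\det\bm Q(0,w)$ gives the chain $\Delta_0(w)=0\iff A_n(0,w)=0\iff A_n(\cdot,w)\equiv0$, as required. The only genuinely non-routine point is recognizing that \eqref{assumption2} must be treated by the $z=0$ evaluation rather than by the integral functional, and checking that the single surviving entry of the first row is exactly $Q_0(0,w)$; everything else reduces to the degree, moment, and reversed-polynomial facts already recorded in Remark~\ref{remarkzeros} and Lemma~\ref{lemma:uniqueness}.
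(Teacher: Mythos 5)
Your proof is correct, and two of your three cases coincide with the paper's own argument: under \eqref{assumption1} with $|w|\geq 1$ you identify the coefficient of $z^{n+2m}$ in $\det\bm Q(\cdot,w)$ as a nonzero multiple of $\Delta_0(w)$ and invoke Lemma~\ref{lemma3.2}, i); under \eqref{assumption3} with $|w|\leq 1$ you apply the functional $\int\overline{\zeta^m}(\cdot)\,d\mu_w$, use \eqref{Eq-Rn-Ortogonality} to annihilate the columns $j\leq 2m-1$ and Lemma~\ref{lemma:uniqueness}, b), to keep the last one, then invoke Lemma~\ref{lemma3.2}, ii) --- exactly as the paper does. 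The genuine difference is the case \eqref{assumption2} with $|w|\leq 1$. The paper also evaluates at $z=0$, but only for one implication ($\Delta_0(w)\neq 0\Rightarrow A_n(\cdot,w)\not\equiv 0$, citing Remark~\ref{remarkzeros} for $K_{n+2m}(0,w;\mu)\neq 0$); for the converse it stays at the level of the basic lemmas: if $\Delta_0(w)=0$, then $\det\bm Q(\cdot,w)\in\spn\{Q_1,\dots,Q_{2m}\}$ is divisible by $z$, so $\det\bm Q(z,w)=z\,G_{2m}(z)B_{n-1}(z,w)$ with $\deg B_{n-1}\leq n-1$, whence \eqref{OrthDet} forces $\int|B_{n-1}|^2\,|G_{2m}|\,d\mu_w=0$ and Lemma~\ref{lemma:positive}, i), gives $B_{n-1}\equiv 0$. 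You instead turn the $z=0$ evaluation into a two-way equivalence by importing Theorem~\ref{thm1}: since $A_n(\cdot,w)=C_n(\overline{w})\,K_n(\cdot,w;\nu)$ and $K_n(0,w;\nu)\neq 0$ for $|w|\leq 1$, vanishing of $A_n(\cdot,w)$ at the single point $z=0$ already forces $A_n(\cdot,w)\equiv 0$. This is legitimate (Theorem~\ref{thm1} is established beforehand, using only Lemmas~\ref{lemma11} and \ref{lemma3.2}, so there is no circularity) and somewhat shorter; it even removes the inessential restriction $n\geq 1$ that the paper imposes in that step. What the paper's route buys is independence from Theorem~\ref{thm1} and a template (divisibility by $z$ plus Lemma~\ref{lemma:positive}) that is reused almost verbatim in the proof of Lemma~\ref{lemma25}; what yours buys is economy, at the cost of needing $K_n(0,w;\nu)\neq 0$ for the \emph{modified} measure, which you correctly justify via $K_N(0,w;\mu)=\kappa_N(\mu)\,\overline{\varphi_N^{\ast}(w;\mu)}$ and the nonvanishing of reversed OPUC on $\overline{\D}$ --- the same fact the paper extracts from Remark~\ref{remarkzeros}.
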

\begin{proof}
Under assumptions of the first part, observe that
	\begin{equation}\label{linearcomb}
		G_{2m}(z)A_n(z,w) = \det \bm Q(z,w) = \Delta_m^{(n)}(w) K_{n+2m}(z,w) + \text{span $\{Q_1, \dots, Q_{2m}\}$}.
	\end{equation}

	We prove the $(\Rightarrow)$ part first, assuming  $\Delta_0(w)\neq 0$. 
	If \eqref{assumption1}, then %under assumption i), $\det \bm Q(1)=\Delta_m^{(n)} K_{n+2m}(1)\neq 0$, so that $A_n\not \equiv 0$. Under assumption ii),
	the leading coefficient of $\det \bm Q(\cdot, w)$ is $\Delta_m^{(n)}(w) \times$ leading coefficient of $K_{n+2m}(\cdot, w)\neq 0$, so $A_n(\cdot, w)\not \equiv 0$.
	And if \eqref{assumption2} takes place, then (see Remark~\ref{remarkzeros}) 
	$$
	\det \bm Q(0,w) = \Delta_0(w) K_{n+2m}(0,w)\neq 0,
	$$
	so $A_n(\cdot, w)\not \equiv 0$ again.

Now, for the $(\Leftarrow)$ part, let $\Delta_0(w)= 0$ and $n\geq 1$. Then, by \eqref{linearcomb},
\begin{equation}\label{caseDelta0}
G_{2m}(z)A_n(z,w) = \det \bm Q(z,w) = \text{span $\{Q_1, \dots, Q_{2m}\}$}.
\end{equation}

	Under assumption \eqref{assumption1} with $|w|\geq 1$, we conclude that 
	$$
	 \deg(\det \bm Q(\cdot, w))<n+2m,
	$$
	and it remains to apply the assertion \textit{i)}  of Lemma~\ref{lemma3.2}.
	
	If we have \eqref{assumption2} with $|w|\leq 1$, then by  \eqref{caseDelta0},
	$$
	G_{2m}(z)A_n(z,w) = \det \bm Q(z,w) = z G_{2m}(z)B_{n-1}(z,w),
	$$
	where $B_{n-1}(\cdot, w)$ is again a polynomial of degree $\leq n-1$. By \eqref{OrthDet},
	\begin{align*}
	0= & \int  \overline{\zeta^{m+1} B_{n-1}(\zeta,w)}  \det \bm Q(\zeta,w)  d \mu_w(\zeta)\\
	 = & \int  \overline{\zeta^{m+1} B_{n-1}(\zeta,w)} \zeta G_{2m}(\zeta)B_{n-1}(\zeta)  d \mu_w(\zeta)\\
	= & \int  |B_{n-1}(\zeta,w)|^2  |G_{2m}(\zeta)|   d \mu_w(\zeta),
	\end{align*}
	and by Lemma~\ref{lemma:positive}, \emph{i)}, we conclude again that $A_n(\cdot, w) \equiv 0$.
	
	Finally, notice that under  \eqref{assumption3}, for each $j=0, 1, \dots, 2m-1$,
	$$
	\int  \overline{\zeta^{m}}  Q_j(\zeta,w) d \mu_w(\zeta) 
	$$
	is a linear combination of integrals of the form
	$$
	\int  \overline{\zeta^{m-s}}  K_{n+2m-j}(\zeta,w) d \mu_w(\zeta)    \quad \text{with}\quad  s\leq m-1, 
	$$
	and by \eqref{Eq-Rn-Ortogonality}, each of these integrals vanishes, so that \eqref{assumption3} implies
	\begin{equation}\label{integralvanishing}
	  \int  \overline{\zeta^{m}}  Q_j(\zeta,w) d \mu_w(\zeta)  =0, \quad j=0, 1, \dots, 2m-1.
	\end{equation}
	In consequence,
	\begin{align*}
	\int \overline{\zeta^m}\det \bm Q(\zeta, w) \, d\mu_w(\zeta) & = \Delta_m(w) \int  \overline{\zeta^{m}}  Q_{2m}(\zeta, w) d \mu_w(\zeta) \\
	& = \Delta_m(w) \int    K_{n}(\zeta,w) d \mu_w(\zeta).
	\end{align*}
	By Lemma~\ref{lemma:uniqueness}, b), the integral in the right hand side does not vanish. Thus, the integral in the left hand side is $=0$ if and only if $\Delta_m(w)=0$, and it remains to use part \emph{ii)} of Lemma~\ref{lemma3.2} to conclude the proof.	
	\end{proof}

Let us look at some sufficient conditions that guarantee that either $\Delta_0$ or $\Delta_m$ do not vanish.
\begin{lema}\label{lemma25}
Let all the zeros of $G_{2m}$ be simple, and either one of the following conditions hold:
\begin{itemize}
\item \eqref{assumption1} with $|w|\geq 1$; 
\item \eqref{assumption2} with $|w|\leq 1$.
\end{itemize} 
If polynomials $Q_1(\cdot, w), \dots, Q_{2m}(\cdot, w)$ are linearly independent, then $\Delta_0(w)\neq 0$.

Furthermore, if \eqref{assumption3} holds with $|w|\leq 1$, then linear independence of the polynomials $Q_0(\cdot, w), Q_1(\cdot, w), \dots, Q_{2m-1}(\cdot, w)$ implies that $\Delta_m(w)\neq 0$.
\end{lema}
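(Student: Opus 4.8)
The plan is to reduce each non-vanishing claim to a statement about divisibility by $G_{2m}$. Since all zeros $z_1,\dots,z_{2m}$ of $G_{2m}$ are simple, the minor $\Delta_0(w)$ in \eqref{defDelta} vanishes if and only if its columns are linearly dependent, i.e. if and only if there are constants $c_1,\dots,c_{2m}$, not all zero, with $P(z):=\sum_{j=1}^{2m}c_j Q_j(z,w)$ vanishing at every $z_i$; as the $z_i$ are $2m$ distinct roots of the degree-$2m$ polynomial $G_{2m}$, this is equivalent to $G_{2m}\mid P$. Linear independence of $Q_1(\cdot,w),\dots,Q_{2m}(\cdot,w)$ forces such a $P$ to be $\not\equiv 0$. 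Hence it suffices to prove that no nonzero element of $\spn\{Q_1(\cdot,w),\dots,Q_{2m}(\cdot,w)\}$ is divisible by $G_{2m}$; the same reduction, applied to \eqref{defDeltam} and $\spn\{Q_0(\cdot,w),\dots,Q_{2m-1}(\cdot,w)\}$, handles $\Delta_m(w)$.

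The mechanism for ruling out a nonzero divisible combination is orthogonality plus positivity. Write such a $P$ as $P=G_{2m}R$. Every $Q_j$ in the span satisfies the orthogonality of Lemma~\ref{lemma11}, so $\int\overline{\zeta^s}P\,d\mu_w=0$ for $m+1\le s\le m+n$; using $G_{2m}(\zeta)=\zeta^m|G_{2m}(\zeta)|$ on $\T$ and $|G_{2m}|\,d\mu_w=(1-\overline{w}\zeta)\,d\nu=:d\nu_w$, these become $\int\overline{\zeta^t}R\,d\nu_w=0$ for $1\le t\le n$. The decisive point is a degree count based on the structural bound $\deg p_j\le\min\{j,m\}$ from \eqref{def-pj}, combined with the relevant extra hypothesis. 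Under \eqref{assumption1} one gets $\deg p_j\le\min\{j-1,m\}$, whence $\deg Q_j\le n+2m-1$ and $\deg R\le n-1$; then $\int\overline{\zeta}\,|R|^2\,d\nu_w=\sum_k\overline{r_k}\int\overline{\zeta^{k+1}}R\,d\nu_w=0$, and part (ii) of Lemma~\ref{lemma:positive}, applied to the positive measure $\nu$ with $|w|\ge 1$, gives $R\equiv0$, so $P\equiv0$, a contradiction. Under \eqref{assumption2} one has $Q_j(0,w)=0$, and since $G_{2m}(0)\ne0$ this forces $R=zS$ with $\deg S\le n-1$; writing $P=\zeta^{m+1}|G_{2m}|S$ on $\T$ turns the orthogonality into $\int\overline{\zeta^t}S\,d\nu_w=0$ for $0\le t\le n-1$, so $\int|S|^2\,d\nu_w=0$, and part (i) of Lemma~\ref{lemma:positive} with $|w|\le1$ gives $S\equiv0$.

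For the $\Delta_m$ claim under \eqref{assumption3} with $|w|\le1$, the same reduction produces $P=\sum_{j=0}^{2m-1}c_jQ_j=G_{2m}R$. Here the bound $\deg p_j\le\min\{j,m\}$ for $j\le m$ and $\deg p_j\le m-1$ for $m<j\le 2m-1$ together give $\deg Q_j\le n+2m$ for all these $j$, so $\deg R\le n$; moreover \eqref{integralvanishing} supplies the extra relation $\int\overline{\zeta^m}Q_j\,d\mu_w=0$ for $0\le j\le 2m-1$, so that together with Lemma~\ref{lemma11} one obtains $\int\overline{\zeta^t}R\,d\nu_w=0$ for the full range $0\le t\le n$. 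Since $\deg R\le n$, expanding gives $\int|R|^2\,d\nu_w=0$, and part (i) of Lemma~\ref{lemma:positive} ($\nu$, $|w|\le1$) yields $R\equiv0$, hence $P\equiv0$, the desired contradiction. I expect the main obstacle to be precisely this degree bookkeeping: one must verify in each case that the span's orthogonality conditions are numerous enough to annihilate the self-inner-product of $R$ (or $S$), which hinges on the bound $\deg p_j\le\min\{j,m\}$ from \eqref{def-pj}—dropping it would make $\deg R$ too large in case \eqref{assumption3}—and on matching the correct alternative (i) or (ii) of Lemma~\ref{lemma:positive} to the sign of $|w|-1$. One also notes that Lemma~\ref{lemma:positive} is applied with $\nu$ in place of $\mu$, which is legitimate since its proof uses only positivity of the measure, and that $\nu$ inherits infinitely many points of increase from $\mu$, so that a polynomial vanishing $\nu$-a.e.\ is identically zero.
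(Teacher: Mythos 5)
Your proof is correct and follows essentially the same route as the paper's: reduce $\Delta_0(w)=0$ (resp.\ $\Delta_m(w)=0$) to the existence of a nonzero combination of the $Q_j$'s divisible by $G_{2m}$, then kill the quotient polynomial using the orthogonality relations \eqref{termsBis} and \eqref{integralvanishing} together with Lemma~\ref{lemma:positive}; your passage to $d\nu_w=|G_{2m}|\,d\mu_w$ is just a repackaging of the paper's pairing $\int\overline{\zeta^{m+1}P(\zeta,w)}\,S(\zeta,w)\,d\mu_w(\zeta)$ (one small caveat: $G_{2m}(\zeta)=\zeta^m|G_{2m}(\zeta)|$ is guaranteed only on $\supp(\mu)$, not on all of $\T$, which is harmless because every integral involved is against a measure supported there). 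If anything, in the $\Delta_m$ case you are more explicit than the paper, which passes from $\int P(\zeta,w)|G_{2m}(\zeta)|\,d\mu_w(\zeta)=0$ directly to part i) of Lemma~\ref{lemma:positive}: your derivation of $\int\overline{\zeta^t}R\,d\nu_w=0$ for the full range $0\le t\le n$, and hence of the nonnegative identity $\int|R|^2\,d\nu_w=0$, supplies precisely the step that makes that invocation legitimate.
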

\begin{proof}
Assume that $\Delta_0(w)= 0$, which means that the columns of the matrix in the right hand side of  \eqref{defDelta}
are linearly dependent: there exist constants $c_1(w), \dots, c_{2m}(w)$,  possibly depending on $w$, \textit{not all zero}, such that 
$$
\sum_{j=1}^{2m} c_j(w) \begin{pmatrix}
Q_j (z_1,w) \\
\vdots \\
Q_j (z_{2m},w)
\end{pmatrix}=\begin{pmatrix}
0 \\
\vdots \\
0
\end{pmatrix}.
$$
In other words, the polynomial 
$$
S(z,w):= \sum_{j=1}^{2m} c_j(w) Q_j(z,w) 
$$
vanishes at the zeros of $G_{2m}$, and by the assumed linear independence of $Q_j$'s, $S(\cdot ,w)\not \equiv 0$.

With \eqref{assumption1}, and since  all the zeros of $G_{2m}$ are simple,
$$
S(z,w)=G_{2m}(z)P(z,w), \quad \deg P(\cdot, w) \leq n-1, \quad P(\cdot, w)\not \equiv 0,
$$
and if $|w|\geq 1$, then by \eqref{termsBis},
\begin{align*}
0= & \int  \overline{\zeta^{m+1} P(\zeta, w)}  S(\zeta, w)  d \mu_w(\zeta) = \int  \overline{\zeta^{m+1} P(\zeta, w)}  G_{2m}(\zeta)P(\zeta, w)  d \mu_w(\zeta)\\
= & \int \overline{\zeta} |P(\zeta, w)|^2  |G_{2m}(\zeta)|    d \mu_w(\zeta),
\end{align*}
so that again by Lemma~\ref{lemma:positive}, ii), we conclude that this is impossible.

In the same vein, with assumption \eqref{assumption2}, 
$$
S(z,w)=G_{2m}(z)z P(z, w), \quad \deg P(\cdot, w) \leq n-1, \quad P(\cdot, w)\not \equiv 0, 
$$
and since $|w|\leq 1$, by \eqref{termsBis},
\begin{align*}
0= & \int  \overline{\zeta^{m+1} P(\zeta, w)}  S(\zeta, w)  d \mu_w(\zeta) = \int  \overline{\zeta^{m+1} P(\zeta, w)} \zeta G_{2m}(\zeta)P(\zeta, w)  d \mu_w(\zeta)\\
= & \int  |P(\zeta, w)|^2  |G_{2m}(\zeta)|    d \mu_w(\zeta),
\end{align*}
and using Lemma~\ref{lemma:positive}, i), we arrive at the same conclusion.

Analogously, if $\Delta_m(w)= 0$, by the same reasoning  there exists a  polynomial 
$$
S(z, w):= \sum_{j=0}^{2m-1} c_j(w() Q_j(z, w) \not \equiv 0
$$
(by the linear independence of $Q_j$'s), that again vanishes at the zeros of $G_{2m}$, so that
$$
S(z, w)=G_{2m}(z)P(z, w), \quad \deg P(\cdot, w) \leq n. 
$$
Using that $S\in \text{span}\{Q_0, \dots, Q_{2m-1}\} $ and \eqref{integralvanishing}, we conclude that
$$
\int  \overline{\zeta^{m}}  S(\zeta, w) d \mu_w(\zeta)= \int  P(\zeta, w) |G_{2m}(\zeta)| d \mu_w(\zeta)   =0,
$$
and it remains to use Lemma~\ref{lemma:positive}, i). 
\end{proof}

Now we can finish the proof of Theorem~\ref{thm1bis}. Indeed, if $|w|\geq 1$, \eqref{assumption1} holds and $Q_1(\cdot, w), Q_2(\cdot, w), \dots,   Q_{2m}(\cdot, w)$ are all linearly independent, then we have $\Delta_0(w)\neq 0$ (Lemma~\ref{lemma25}), then $A_n(\cdot, w)\not \equiv 0$ (Lemma~\ref{lemma3.2bis}), which implies that in \eqref{mainformula}, $C_n(\overline{w})\neq 0$ (Lemma~\ref{lemma3.2}).

On the other hand, if $|w|\leq 1$, \eqref{assumption3} holds and $Q_0(\cdot, w), Q_1(\cdot, w), \dots,   Q_{2m-1}(\cdot, w)$ are all linearly independent, then we have $\Delta_m(w)\neq 0$ (Lemma~\ref{lemma25}), then $A_n(\cdot, w)\not \equiv 0$ (Lemma~\ref{lemma3.2bis}), which again implies that in \eqref{mainformula}, $C_n(\overline{w})\neq 0$ (Lemma~\ref{lemma3.2}).

Finally, the case of $0<|w|\leq 1$, with condition \eqref{assumption2} holds, $G_{2m}(0)\neq 0$, and when polynomials $Q_1(\cdot, 1/\overline{w})$, $Q_2(\cdot, 1/\overline{w})$, \dots,   $Q_{2m}(\cdot, 1/\overline{w})$  are linearly independent, follows from the first case considered and Proposition~\ref{prop1} (see its proof next).
 
\bigskip

Now we turn to Proposition~\ref{prop1}. 

\begin{proof}[Proof of Proposition~\ref{prop1}]
Recall that admissibility of $ \mathcal P$ means that for $j=0, 1,\dots , 2m$,
$$
p_j \in \spn \{z^k:\,  \max\{ 0, j-m\}\leq k \leq \min \{j,m\} \},
$$
which makes the first statement of the Proposition a straightforward consequence of \eqref{definitionHat}. 

Let us use the superscript $\mathcal P$ in the definition \eqref{def-Qj}   to indicate the dependence of $Q_j$'s on the  admissible set explicitly:
$$
Q_j^{(\mathcal P)}(z,w) = p_j(z) K_{n+2m-j}(z,w; \mu), \quad j=0, 1, \dots, 2m.
$$
Using the identity
$$
K_{n}(z,w;\mu) = \overline{w}^n z^n \overline{K_{n}\left(\frac{1}{\overline{z}},\frac{1}{\overline{w}};\mu \right)}
$$
we can rewrite it as 
\[
Q_j^{(\mathcal P)}(z,w) = p_j(z) \overline{w}^{n+2m-j} z^{n+2m-j} \overline{K_{n+2m-j}\left(\frac{1}{\overline{z}},\frac{1}{\overline{w}};\mu \right)}, 
\]
Conjugating and using the definition \eqref{definitionHat} we conclude that
%\begin{equation*}
%\begin{split}
%z^{n+2m}\, \overline{Q_j^{(\mathcal P)}\left(\frac{1}{\overline{z}},w  \right)} &= w^{n+2m-j} \widehat{p}_{j}(z) K_{n+2m-j}\left(z,\frac{1}{\overline{w}};\mu \right) \\
%& = w^{n+2m-j} Q_j^{(\widehat{\mathcal P})}\left(z,\frac{1}{\overline{w}}  \right).
%\end{split}
%\end{equation*}
\begin{equation}\label{identityforQs}
Q_j^{(\mathcal P)}\left(z,w  \right) = \overline{w}^{n+2m-j} z^{n+2m}\, \overline{Q_j^{(\widehat{\mathcal P})}\left(\frac{1}{\overline{z}},\frac{1}{\overline{w}}  \right)}.
\end{equation}
Denoting $z_0=z$, we have  by \eqref{mainformula} that
$$
C_n^{(\mathcal P)}(\overline{w}) \, G_{2m}(z)K_n(z,w; \nu) = \det \bm Q(z,w) = \det \left( Q_j^{(\mathcal P)}\left(z_i,w  \right)\right)_{i, j=0, 1, \dots, 2m}.
$$
Using \eqref{identityforQs} we continue this set of identities as
\begin{equation*}
\begin{split}
C_n^{(\mathcal P)}(\overline{w}) \,& G_{2m}(z)K_n(z,w; \nu) =\det \left(  \overline{w}^{n+2m-j} z_i^{n+2m}\, \overline{Q_j^{(\widehat{\mathcal P})}\left(\frac{1}{\overline{z_i}},\frac{1}{\overline{w}}  \right)}\right)_{i, j=0, 1, \dots, 2m}\\
& = \overline{w}^{(2 m+1) (m + n)} z^{n+2m} \prod_{i=1}^{2m} \left(z_i^{n+2m}\right)\,  \det \left(    \overline{Q_j^{(\widehat{\mathcal P})}\left(\frac{1}{\overline{z_i}},\frac{1}{\overline{w}}  \right)}\right)_{i, j=0, 1, \dots, 2m}.
\end{split}
\end{equation*}
Since by Proposition~\ref{prop1}, $\widehat{\mathcal P}$ is also admissible, we get from \eqref{mainformula}:
\begin{equation*}
\begin{split}
C_n^{(\mathcal P)}(\overline{w}) \,& G_{2m}(z)K_n(z,w; \nu) \\
& = \overline{w}^{(2 m+1) (m + n)} z^{n+2m} \prod_{i=1}^{2m} \left(z_i^{n+2m}\right)\, \overline{C_n^{(\widehat{\mathcal P})}\left(\frac{1}{w}  \right) \, G_{2m}\left(\frac{1}{\overline{z}}  \right)K_n\left(\frac{1}{\overline{z}},\frac{1}{\overline{w}} ;\nu \right)}.
\end{split}
\end{equation*}
By the definition of self-reciprocal polynomial,
$$
z^{2m} \overline{ G_{2m}\left(\frac{1}{\overline{z}}  \right)}= \, G_{2m}(z),
$$
hence it finally simplifies to
\begin{equation*}
\begin{split}
C_n^{(\mathcal P)}(\overline{w}) \,& G_{2m}(z)K_n(z,w; \nu) \\
& = \overline{w}^{(2 m+1) (m + n)} z^{n} G_{2m}(z) \prod_{i=1}^{2m} \left(z_i^{n+2m}\right)\, \overline{C_n^{(\widehat{\mathcal P})}\left(\frac{1}{w}  \right) \,  K_n\left(\frac{1}{\overline{z}},\frac{1}{\overline{w}} ;\nu \right)}.
\end{split}
\end{equation*}
This proves \eqref{equivalence}.
\end{proof}

\medskip

We finish this section by establishing that the choice of  $\mathcal P$ given in \eqref{example1} (and consequently, of $\widehat{\mathcal P} $ in \eqref{example2}) renders a non-trivial identity for the CD kernel $K_n(z,w;\nu)$ when $|w|=1$. 

\begin{proof}[Proof of Proposition~\ref{proponT}]
In order to prove our statement, we need to modify the notation, reflecting explicitly the dependence on $m$ and $n$, but not on $\mathcal P$. Hence, along this proof we denote 
$$
Q_j^{(n,m)}(z,w):= z^{\lfloor j/2\rfloor}K_{n+2m-j}(z,w; \mu), \quad j=0, 1, \dots, 2m.
$$

Observe that with the assumptions \eqref{conditiononw}, 
$$
\deg Q^{(n,m)}_0 (\cdot,w)=\deg K_{n+2m}(\cdot,w; \mu)=n+2m> \deg Q^{(n,m)}_j (\cdot,w), \quad j=1, \dots, 2m,
$$
so it is sufficient to establish the linear independence of $Q^{(n,m)}_1(\cdot, w)$,  $Q^{(n,m)}_2(\cdot, w)$, \dots,   $Q^{(n,m)}_{2m}(\cdot, w)$. We do it by induction in $m$. For $m=1$, the system 
$$
\{Q_j^{(n,1)}(z,w) \}_{j=1}^{2} = \{ K_{n+1}(z,w; \mu), z K_{n}(z,w; \mu)  \} 
$$
is linearly independent, just because, again by assumptions \eqref{conditiononw}, $Q_1^{(n,1)}(0,w)=K_{n+1}(0,w; \mu)\neq 0$, and $Q_2^{(n,1)}(0,w)= 0$.

Assuming that the linear independence is proved already for 
$\{Q_j^{(n,m-1)}(z,w) \}_{j=1}^{2(m-1)}$ for all $n\geq 0$, let  
$$
S(z,w):=\sum_{j=1}^{2m} a_j Q_j^{(n,m)}(z,w)\equiv 0.
$$
Again, $Q_1^{(n,m)}(0,w)=K_{n+2m-j}(0,w; \mu)\neq 0$, and $Q_j^{(n,m)}(0,w)=0$ for $j=2, \dots , 2m$, so that $a_1=0$, and thus,
%$S$ is divisible by $z$. Thus,
$$
S(z,w) = \sum_{j=2}^{2m} a_j  Q_j^{(n,m)}(z,w)\equiv 0.
$$ 
But for $j=3, \dots, 2m$,
$$
\deg ( Q_2^{(n,m)}(z,w))=\deg (z K_{n-2m-2}(z,w; \mu)=n-2m-1 > \deg  ( Q_j^{(n,m)}(z,w)),  
$$
which yields now that also $a_2=0$. It remains to observe that
$$
Q_j^{(n,m)}(z,w) = z\,  Q_{j-2}^{(n-1,m-1)}(z,w), \quad j=3, \dots, 2m,
$$
which, by the induction hypothesis, are linearly independent. This yields that also $a_3=\dots =a_{2m}$. 
Hence,
$$
\left\{ Q_j^{(n,m)}(\cdot,w):\,  j=1, \dots, 2m\right\}
$$
are linearly independent. 
The proposition is proved. 
\end{proof}

\section{A three-term recurrence for CD kernels} \label{Sec-ModifiedCDKernel}
\setcounter{equation}{0}

We now give some special consideration to the case in which $w = 1$. 
It is well known that the sequence $K_n(z,1;\mu)$ satisfies a three-term recurrence relation (see \cite[Thm.\,2.1]{Costa-Felix-Ranga-JAT2013}). Moreover, with an appropriate normalization this recurrence takes an especially convenient form, which we briefly summarize here.

In what follows, we use the standard notation $\Phi_{n}(z;\mu) = \varphi_{n}(z;\mu)/\kappa_n(\mu)$, $n \geq 0$, for the monic OPUC, as well as for the Verblunsky coefficients $\alpha_{n}(\mu) = - \overline{\Phi_{n+1}(0;\mu)}$, $n \geq 0$. It is well known that  $|\alpha_n(\mu)| < 1$ for $n \geq 0$, and that the sequence  $\{\alpha_n(\mu) \}_{n\geq 0}$ uniquely determines the  measure $\mu$ on $\T$ and allows to recover the monic OPUC via the Szeg\H{o} recurrence,
\begin{equation*} %\label{Szego-A-RR}
\Phi_{n}(z;\mu) =  z \Phi_{n-1}(z;\mu) - \overline{\alpha}_{n-1}(\mu)\,      \Phi_{n-1}^{\ast}(z;\mu), \quad 
n \geq 1,
\end{equation*}
(see, for example,  \cite{ENZG1} and \cite{Simon-Book-p1}). 

Let 
\begin{equation*}%\label{defTau}
\tau_{n}(\mu) := \overline{ \left(\frac{ \varphi_{n}^{\ast}(1;\mu)}{  \varphi_{n}(1;\mu)  }\right)} =   \frac{ \Phi_{n}(1;\mu)}{   \Phi_{n}^{\ast}(1;\mu)  }\in \T;
\end{equation*}
then $\tau_n$'s also satisfy a  relation, which can be used to compute them recursively in terms of $\alpha_n$'s,
\begin{equation*}%\label{recurrenceC}
\tau_{n}(\mu) = \frac{\tau_{n-1}(\mu) - \overline{\alpha}_{n-1}(\mu)}{1 - \tau_{n-1}(\mu)\alpha_{n-1}(\mu)},  \quad n \geq 1,
\end{equation*}
starting with $\tau_0(\mu)=1$.

We define the sequence 
\begin{equation}\label{parametricSeq1}
g_{n}(\mu) := \frac{1}{2} \frac{\big|1 - \tau_{n-1}(\mu) \alpha_{n-1}(\mu)\big|^2}{\big[1 - \Re\big(\tau_{n-1}(\mu)\alpha_{n-1}(\mu)\big)\big]}, \quad n \geq 1.
\end{equation}
It is easy to check that all $g_n\in (0,1)$, so the terms of the following sequence are all positive:
\begin{equation} \label{defXi}
\xi_n(\mu) := \xi_0(\mu) \prod_{j=1}^{n}\big(1-g_j(\mu)\big), \quad n \geq 1, \qquad \xi_0(\mu) :=   \int_{\T} d\mu(\zeta).
\end{equation}
With this notation we introduce the normalized CD kernels
\begin{equation} \label{Eq-Normalized-CDKernel}
    R_n(z;\mu) := \xi_n(\mu)\, K_{n}(z,1;\mu) , \quad n \geq 0.
\end{equation} 
It turns out (see ~\cite{Costa-Felix-Ranga-JAT2013}) that they satisfy the following three-term recurrence formulas:
\begin{equation} \label{Eq-TTRR-Rn1}
R_{n+1}(z,\mu) = [(1+ic_{n+1})z + (1-ic_{n+1})]\,R_{n}(z,\mu) - 4d_{n+1} z\,R_{n-1}(z,\mu),   
\end{equation}
for $n \geq0$, with $R_{-1}(z,\mu) = 1$ and $R_{0}(z,\mu) = 1$, where both $\{c_n\}_{n \geq 1}$ and $\{d_{n+1}\}_{n \geq 1}$ are  real sequences. In fact,
\begin{equation}\label{cSeq}
c_{n} = c_{n}(\mu)= \frac{\Im \big(\tau_{n-1}(\mu)\alpha_{n-1}(\mu)\big)} {\Re(\tau_{n-1}\alpha_{n-1})-1} \in \R, \quad n\geq 1,
\end{equation}
and
\begin{equation}\label{def:CS1}
d_{n+1} = d_{n+1}(\mu)  = \big(1-g_{n}(\mu)\big)g_{n+1}(\mu), \quad n \geq 1,
\end{equation}
with $g_n(\mu)$ from \eqref{parametricSeq1}. In the standard terminology, this means that $\{d_{n+1}(\mu)\}_{n \geq 1}$ is a positive chain sequence, and $\{g_{n+1}(\mu)\}_{n\geq 0}$  is a parameter sequence  for  $\{d_{n+1}(\mu)\}_{n \geq 1}$.  
From  \cite{BracRangaSwami-2016, Castillo-Costa-Ranga-Veronese-JAT2014, MFinkelshtein-SRanga-Veronese-2016} it is known that the double sequence   $\{(c_n, g_n)\}_{n \geq 1}$ determines uniquely the measure $\mu$  on $\T$, as it happens also to the Verblunsky coefficients. There is actually a direct connection between these two parametrizations: if from $\{\alpha_n\}_{n \geq 0}$  to $\{(c_n, g_n)\}_{n \geq 1}$ we can navigate using \eqref{parametricSeq1} and \eqref{cSeq}, the inverse mapping is given by 
\begin{equation} \label{Eq-Inverse-Formula}
\alpha_{n-1}  = \frac{1}{\tau_{n-1}} \frac{1-2g_{n}-ic_{n}}{1 - ic_{n}}, \quad \tau_{n} =  \frac{1-ic_{n}}{1+ic_{n}}\,\tau_{n-1}, \quad n \geq 1,
\end{equation}
with $\tau_0=1$.

The Christoffel transformation  $\mu \mapsto \nu$, given by \eqref{defMutilde}, induces the corresponding transformation both on the Verblunsky coefficients and on the $ (c_n, g_n)$ parametrization of measures. Formulas for $\alpha_n(\nu)$ can be derived from \cite{Ismail-Ruedemann-JAT1992} or \cite{Li-Marcellan-CATCF1999} by evaluating the OPUC at the origin. A natural question, that we address next, is the existence of an effective way of constructing $\{(c_n(\nu), g_n(\nu)\}_{n \geq 1}$ from $\{(c_n(\mu), g_n(\mu)\}_{n \geq 1}$.

If $\mathcal P=\{p_0, p_1, \dots, p_{2m}\}$ is an admissible set (see Definition~\ref{def:admissible}) satisfying \eqref{assumption1}, then by Lemma~\ref{lemma3.2bis}, $\Delta_0(1)\neq 0 $ (see \eqref{defDelta}), so that we can rewrite \eqref{mainformula} as
\begin{equation} \label{ConnectionRn}
\dsp {\gamma_{n}^{(m)}}  G_{2m}(z) R_{n}(z;\nu) =  R_{n+2m}(z;\mu) + \sum_{j=1}^{2m} a_{2j-1}^{(n,m)}  p_{j}(z) R_{n+2m-1}(z;\mu) , \quad n\ge 0,
\end{equation}
where each coefficient $a_{j}^{(n,m)}$, $j=1,2, \dots, 2m$, can be  computed in a trivial fashion as a ratio of two minors of the matrix $
\bm Q^{(\mathcal P)}(z,1)$ in \eqref{defQmatrix}, with $\Delta_0(1)$ in the denominators, times $1/\xi_{n+2m-1}(\mu)$.

Let us consider particularly the admissible set \eqref{example1}, so that \eqref{ConnectionRn} takes the form
\begin{equation} \label{Eq-Connection-Formula-Rn}
  \begin{split} 
     {\gamma_{n}^{(m)}}  G_{2m}(z) R_{n}(z;\nu) = & R_{n+2m}(z;\mu) + \sum_{j=1}^{m} a_{2j-1}^{(n,m)}   z^{j-1} R_{n+2m-2j+1}(z;\mu) \\ 
    & + \sum_{j=1}^{m} a_{2j}^{(n,m)}  z^{j} R_{n+2m-2j}(z;\mu), \quad n\ge 0.
  \end{split}
\end{equation}

\begin{theo} \label{Thm-Rn-Rn-Conection}
	Let $N = n+2m$. The coefficients $\gamma_{n}^{(m)}$ in the left hand side of the connection formula \eqref{Eq-Connection-Formula-Rn} satisfy
	\[
	\frac{\gamma_{n}^{(m)}}{\gamma_{n-1}^{(m)}} = \frac{1}{2} \left[[1-ic_{N-1}(\mu)] \frac{[1-ic_{N}(\mu)]+ a_1^{(n,m)}}{[1-ic_{N-1}(\mu)]+ a_1^{(n-1,m)}} + [1+ic_{N}(\mu)]  \right],  \quad n \geq 1, 
	\]
	with
$\gamma_{0}^{(m)} = [\overline{G_{2m}(0)}]^{-1}\prod_{j=1}^{2m}[1+ic_{j}(\mu)]$.

Furthermore, the the  coefficients  $c_n(\nu)$ and  $g_{n}(\nu)$ corresponding to the measure $\nu$ from   \eqref{defMutilde} are, for $n \geq 1$,
\[
    c_n(\nu) = \frac{i}{2} \frac{\gamma_{n-1}^{(m)}}{\gamma_{n}^{(m)}}\left[[1-ic_{N-1}(\mu)] \frac{[1-ic_{N}(\mu)]+ a_1^{(n,m)}}{[1-ic_{N-1}(\mu)]+ a_1^{(n-1,m)}} - [1+ic_{N}(\mu)]  \right], 
\]
and 
\[
     g_n(\nu) =1- \frac{\gamma_{n-1}^{(m)}}{\gamma_{n}^{(m)}}\frac{\xi_{N}(\mu) + \dsp \sum_{j=1}^{m}a_{2j-1}^{(n,m)}\xi_{N-2j+1}(\mu) +  \sum_{j=1}^{m}a_{2j}^{(n,m)}\xi_{N-2j}(\mu)} 
    {\xi_{N-1}(\mu) + \dsp \sum_{j=1}^{m}a_{2j-1}^{(n-1,m)}\xi_{N-2j}(\mu) + \sum_{j=1}^{m}a_{2j}^{(n-1,m)}\xi_{N-2j-1}(\mu)}, 
\]
with $\xi_n(\mu)$ defined in \eqref{defXi}.
\end{theo}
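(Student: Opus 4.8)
The plan is to extract three independent scalar identities from the connection formula \eqref{Eq-Connection-Formula-Rn} by projecting it onto three functionals: the coefficient of $z^{N}$ (with $N=n+2m$), the value at $z=0$, and the integral against $\overline{\zeta^{m}}\,d\mu$. Before doing so I would record two elementary facts that drive everything. First, since the $c_j(\mu)$ are real and the recurrence \eqref{Eq-TTRR-Rn1} preserves self-inversiveness (together with the normalization \eqref{Eq-Normalized-CDKernel}), $R_n(\cdot;\mu)$ has degree exactly $n$ with leading coefficient $\prod_{j=1}^{n}(1+ic_j(\mu))$ and constant term $R_n(0;\mu)=\prod_{j=1}^{n}(1-ic_j(\mu))$, the two being complex conjugates; the same holds for $\nu$. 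Second, the reproducing property of the CD kernel gives $\int \overline{f(\zeta)}\,K_k(\zeta,1;\mu)\,d\mu(\zeta)=\overline{f(1)}$ whenever $\deg f\le k$, so that $\int R_k(\zeta;\mu)\,d\mu(\zeta)=\xi_k(\mu)$ and, more generally, $\int \overline{\zeta^{\,m-p}}\,R_k(\zeta;\mu)\,d\mu(\zeta)=\xi_k(\mu)$ whenever $0\le m-p\le k$.

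With these in hand, comparing the coefficient of $z^{N}$ is immediate: every term on the right of \eqref{Eq-Connection-Formula-Rn} except $R_{N}(\cdot;\mu)$ has degree $\le N-1$, and $G_{2m}$ has leading coefficient $\overline{G_{2m}(0)}$, so the leading coefficients give $\gamma_{n}^{(m)}\,\overline{G_{2m}(0)}\prod_{j=1}^{n}(1+ic_j(\nu))=\prod_{j=1}^{N}(1+ic_j(\mu))$. Setting $n=0$ (empty product) yields the stated value of $\gamma_{0}^{(m)}$, and dividing the identity at level $n$ by that at level $n-1$ gives $(\gamma_{n}^{(m)}/\gamma_{n-1}^{(m)})(1+ic_n(\nu))=1+ic_N(\mu)$. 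Evaluating \eqref{Eq-Connection-Formula-Rn} at $z=0$ kills every term carrying a positive power of $z$, leaving $R_{N}(0;\mu)+a_1^{(n,m)}R_{N-1}(0;\mu)$ on the right; forming the same level-$n$/level-$(n-1)$ ratio (the factor $G_{2m}(0)$ cancels) produces $(\gamma_{n}^{(m)}/\gamma_{n-1}^{(m)})(1-ic_n(\nu))=X$, where $X:=(1-ic_{N-1})\frac{(1-ic_N)+a_1^{(n,m)}}{(1-ic_{N-1})+a_1^{(n-1,m)}}$.

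Writing $Y:=1+ic_N(\mu)$, the two relations read $(\gamma_n^{(m)}/\gamma_{n-1}^{(m)})(1+ic_n(\nu))=Y$ and $(\gamma_n^{(m)}/\gamma_{n-1}^{(m)})(1-ic_n(\nu))=X$. Adding gives $\gamma_n^{(m)}/\gamma_{n-1}^{(m)}=\tfrac12(X+Y)$, the first assertion, and subtracting isolates $c_n(\nu)=\tfrac{i}{2}(\gamma_{n-1}^{(m)}/\gamma_n^{(m)})(X-Y)$, the second; so the ratio and $c_n(\nu)$ formulas are just the sum and difference of the leading-coefficient and constant-term comparisons. For $g_n(\nu)$ I would use $1-g_n(\nu)=\xi_n(\nu)/\xi_{n-1}(\nu)$ from \eqref{defXi} and compute $\xi_n(\nu)$ directly: multiplying \eqref{Eq-Connection-Formula-Rn} by $\overline{\zeta^{m}}$ and integrating against $d\mu$, the left side becomes $\gamma_n^{(m)}\int R_n(\zeta;\nu)\,d\nu(\zeta)=\gamma_n^{(m)}\xi_n(\nu)$ because $G_{2m}(\zeta)\,d\mu(\zeta)=\zeta^{m}\,d\nu(\zeta)$ by \eqref{defMutilde}, while on the right each summand has the form $\int \overline{\zeta^{\,m-p}}R_k(\zeta;\mu)\,d\mu$ and equals $\xi_k(\mu)$ by the reproducing property. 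This gives $\gamma_n^{(m)}\xi_n(\nu)=\xi_N(\mu)+\sum_{j=1}^{m}a_{2j-1}^{(n,m)}\xi_{N-2j+1}(\mu)+\sum_{j=1}^{m}a_{2j}^{(n,m)}\xi_{N-2j}(\mu)$; writing the same identity at level $n-1$ (so $N\mapsto N-1$) and taking the quotient produces $g_n(\nu)$ in the stated form.

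I expect the only genuinely delicate points to lie in the last step: verifying that $G_{2m}\,d\mu=\zeta^m\,d\nu$ indeed converts the left side into $\xi_n(\nu)$, and checking the degree inequalities $0\le m-p\le k$ for every summand so that the reproducing property applies verbatim rather than producing boundary contributions. Pinning down the exact leading coefficient and constant term of $R_n$—in particular the conjugate symmetry $\prod(1+ic_j)=\overline{\prod(1-ic_j)}$ that makes the two polynomial comparisons independent—is the other place requiring care, but it follows directly from \eqref{Eq-TTRR-Rn1} and \eqref{Eq-Normalized-CDKernel}.
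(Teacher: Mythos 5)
Your proposal is correct and is essentially the paper's own argument in light disguise: since $G_{2m}$ and all the $R_k$'s are self-inversive, your leading-coefficient comparison is exactly the complex conjugate of the paper's step of evaluating the conjugate-reciprocal companion identity \eqref{Eq-Connection-Formula-Rn-Recipro} at $z=0$, your constant-term comparison is the paper's evaluation of \eqref{Eq-Connection-Formula-Rn} at $z=0$, and your product formulas $\prod_{j}(1\pm ic_j)$ encode the same information as the paper's normalization $\Re\big(R_n(0;\cdot)/R_{n-1}(0;\cdot)\big)=1$ together with $R_n(0;\cdot)=(1-ic_n)R_{n-1}(0;\cdot)$. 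Your derivation of $\gamma_n^{(m)}\xi_n(\nu)$ by integrating \eqref{Eq-Connection-Formula-Rn} against $\overline{\zeta^m}\,d\mu$ and using $G_{2m}(\zeta)\,d\mu=\zeta^m\,d\nu$ is likewise the paper's reproducing-property step (applied there to \eqref{Eq-Connection-Formula-Rn-Recipro}), with your explicit verification of the degree bounds $0\le m-p\le k$ being a welcome precision.
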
 

%\noindent{\bf Proof}. 
\begin{proof}
It is easily seen from \eqref{Eq-TTRR-Rn1} that $R_{n}(0;\cdot) = (1-ic_n)R_{n-1}(0;\cdot)$, $n \geq 1$.  Hence, we always have 
\begin{equation} \label{Eq-R_n-Normalization}
\Re \left( \frac{R_{n}(0;\mu)}{R_{n-1}(0;\mu)} \right)   = 1 \quad \mbox{and} \quad  \Re \left( \frac{R_{n}(0;\nu)}{R_{n-1}(0;\nu)} \right)   = 1, \quad n \geq 1.
\end{equation}

One of the features of the modified kernels $R_n(.; \mu)$ and $R_n(.;\nu)$ is that they are conjugate-reciprocal polynomials, so that 
\begin{equation} \label{Eq-Connection-Formula-Rn-Recipro}
	\begin{split}
		{\overline{\gamma}_{n}^{(m)}}  G_{2m}(z) R_{n}(z;\nu) = & R_{n+2m}(z;\mu) + \sum_{j=1}^{m} \overline{a}_{2j-1}^{(n,m)}   z^{j} R_{n+2m-2j+1}(z;\mu)  \\ 
		& + \sum_{j=1}^{m} \overline{a}_{2j}^{(n,m)}  z^{j} R_{n+2m-2j}(z; \mu), \quad n\ge 0.
	\end{split}
\end{equation}

From \eqref{Eq-Connection-Formula-Rn} and \eqref{Eq-Connection-Formula-Rn-Recipro} we get
\[
     \gamma_{n}^{(m)} G_{2m}(0)R_{n}(0;\nu)  =  \big[R_{n+2m}(0;\mu) + a_{1}^{(n,m)} R_{n+2m-1}(0;\mu)\big], \quad n \geq 0,
\]
and
\[
   \overline{\gamma}_{n}^{(m)} G_{2m}(0) R_n(0;\nu) =  R_{n+2m}(0;\mu), \quad n \geq 0.
\]
Hence, 
\[
    \gamma_{0}^{(m)} = \frac{\,\overline{R_{2m}(0;\mu)}\,}{\,\overline{G_{2m}(0)}\,} = \frac{R_{2m}(0;\mu) + a_{1}^{(0,m)} R_{2m-1}(0;\mu)}{G_{2m}(0)}, 
\]
which also establishes the value of $\gamma_{0}^{(m)}$ as stated in the theorem. 

Next, by \eqref{Eq-R_n-Normalization}, 
\[ 
   \begin{array}{ll} 
    2 & \dsp = \frac{R_{n}(0;\nu)}{R_{n-1}(0;\nu)} + \frac{\overline{R_{n}(0;\nu)}}{\overline{R_{n-1}(0;\nu)}} \\[3ex]
     & \dsp = \frac{\gamma_{n-1}^{(m)}}{\gamma_{n}^{(m)}} \left[\frac{R_{n+2m}(0;\mu) + a_1^{(n,m)}R_{n+2m-1}(0;\mu)}{R_{n+2m-1}(0;\mu) + a_1^{(n-1,m)}R_{n+2m-2}(0;\mu)} + \frac{\overline{R_{n+2m}(0;\mu)}}{\overline{R_{n+2m-1}(0;\mu)}} \right], \quad n \geq 1.
   \end{array}
\]
From this and from the identity $R_n(0;\mu) = [1-ic_n(\mu)]R_{n-1}(0;\mu)$ we easily obtain the results for $\gamma_{n}^{(m)}$. 
Likewise, using that $-c_n(\nu) = \Im\big(R_n(0;\nu)/R_{n-1}(0;\nu) \big)$ we find the expression for $c_n(\nu)$.

From the reproducing properties of the kernels $K_n(z;\mu)$ and $K_n(z;\nu)$, we have 
\begin{equation*} \label{Eq-Rn-ReproducingProperty}
   \int_{\T} p(z) \overline{R_n(z;\mu)} d \mu(z) = \xi_{n}(\mu)\, p(1) \quad \mbox{and} \quad \int_{\T} p(z) \overline{R_n(z;\nu)} d \nu(z) = \xi_{n}(\nu)\, p(1),
\end{equation*}
when $p(z)$ is a polynomial of degree at most $n$. Thus, from \eqref{Eq-Connection-Formula-Rn-Recipro} we have 
\[
    \gamma_{n}^{(m)}\,\xi_n(\nu) = \xi_{n+2m} + \sum_{j=1}^{m}a_{2j-1}^{(n,m)}\xi_{n+2m-2j+1}(\mu) + \sum_{j=1}^{m}a_{2j}^{(n,m)}\xi_{n+2m-2j}(\mu).  
\]
This leads to the result for $g_n(\nu)$ as stated. %\eProof
%\pagebreak
\end{proof}

Let us consider the simplest non-trivial case, when $m=1$, so that  $d \nu(z) =  z^{-1} G_{2}(z) \, d \mu(z)$, where $G_2(z) = az^2 + 2bz + \overline{a}$ is such that $G_2(\zeta)/\zeta$ is positive on $\supp(\mu)$. Let $\{K_n(\cdot, 1; \mu)\}_{n\geq 0}$ and $\{K_n(\cdot, 1;\nu)\}_{n\geq 0}$ be respectively the CD kernel polynomials with respect to $\mu$ and $\nu$. If the zeros $z_1$ and $z_2$ of $G_2$ are distinct then 
 	\[
 	u^{(n,1)} G_{2}(z) K_{n}(z, 1;\nu) =  K_{n+2}(z, 1;\mu) + v_{1}^{(n,1)} K_{n+1}(z, 1;\mu) 
 	+  v_{2}^{(n,1)} zK_{n}(z, 1;\mu), \quad n \geq 0,
 	\]
 	where
 	\[
 	v_{1}^{(n,1)} = \frac{1}{\Delta_{1}^{(n)}} \big[ K_{n+2}(z_2, 1;\mu) z_1 K_{n}(z_1, 1;\mu) - K_{n+2}(z_1, 1;\mu)z_2K_n(z_2, 1;\mu)\big]
 	\]
 	and
 	\[
 	v_{2}^{(n,1)}  = \frac{1}{\Delta_{1}^{(n)}} \big[ K_{n+2}(z_2, 1;\mu)  K_{n+1}(z_1, 1;\mu) - K_{n+2}(z_1, 1;\mu)K_{n+1}(z_2, 1;\mu)\big].
 	\]
 	Here,
 	\[
 	\Delta_{1}^{(n)} 
 	= K_{n+1}(z_1, 1;\mu) z_2 K_{n}(z_2, 1;\mu) - K_{n+1}(z_2, 1;\mu)  z_1 K_{n}(z_1, 1;\mu). 
 	\]
% \end{coro}

 In the case when  $G_2$ has a multiple zero then the formulas for $v_{1}^{(n,1)}$ and $v_{2}^{(n,1)}$ can be replaced by  
\begin{align*}
 	v_{1}^{(n,1)} = &  \frac{-1}{\Delta_{1}^{(n)}} \big[K_{n+2}(z_1, 1;\mu)z_1 K^{\prime}_n(z_1, 1;\mu) \\ 
 	& \phantom{11111}- K^{\prime}_{n+2}(z_1, 1;\mu) z_1 K_{n}(z_1, 1;\mu) + K_{n+2}(z_1, 1;\mu)K_n(z_1, 1;\mu)\big],
 	\\
 	v_{2}^{(n,1)}  = & \frac{-1}{\Delta_{1}^{(n)}} \big[K^{\prime}_{n+2}(z_1, 1;\mu)K_{n+1}(z_1, 1;\mu) - K_{n+2}(z_1, 1;\mu)  K^{\prime}_{n+1}(z_1, 1;\mu)\big].
\end{align*}
Here, 
\begin{align*}
 \Delta_{1}^{(n)} 
 	= & K_{n+1}(z_1, 1;\mu) z_1 K^{\prime}_{n}(z_1, 1;\mu) - K^{\prime}_{n+1}(z_1, 1;\mu)  z_1 K_{n}(z_1, 1;\mu) \\
 	 & + K_{n+1}(z_1, 1;\mu) K_{n}(z_1, 1;\mu). 
\end{align*}

In terms of the normalized CD kernels $R_n(z;\mu) = \xi_n(\mu) K_{n}(z,1;\mu)$ and $R_n(z;\nu) = \xi_n(\nu) K_{n}(z,1;\nu)$, the relation \eqref{Eq-Connection-Formula-Rn} takes the form 
\begin{equation} \label{Eq-Connection-Formula-Rn-1}
  \begin{array}{l}
    \dsp \gamma_{n}  G_{2}(z) R_{n}(z;\nu) =  R_{n+2}(z;\mu) +  a_{1}^{(n)}   R_{n+1}(z;\mu)  +  a_{2}^{(n)}  z R_{n}(z;\mu), 
  \end{array}
\end{equation}
for $n \geq 0$, where 
 $(a_1^{(n)},  a_{2}^{(n)}) $ is the solution of the system of equations
\[
    \left[  \begin{array}{cc}
                 R_{n+1}(z_1;\mu) & z_1R_{n}(z_1;\mu) \\[1ex]
                 R_{n+1}(z_{2};\mu) &  z_2R_{n}(z_{2};\mu)   
              \end{array} \right]  
     \left[  \begin{array}{c}
                  a_1^{(n,1)} \\[1ex]
                  a_2^{(n,1)} 
               \end{array} \right]  =      
     \left[  \begin{array}{c}
                -R_{n+2}(z_1;\mu) \\[1ex]
                -R_{n+2}(z_2;\mu)  
         \end{array} \right] ,                
\]
assuming $z_1 \neq z_2$. 
A direct consequence of Theorem \ref{Thm-Rn-Rn-Conection} is the following: 
\begin{coro} \label{Coro-tttt}
	The coefficients in the connection formula \eqref{Eq-Connection-Formula-Rn-1} satisfy
	\[
	\gamma_n = \frac{1}{2} \left[[1-ic_{n+1}(\mu)] \frac{[1-ic_{n+2}(\mu)]+ a_1^{(n)}}{[1-ic_{n+1}(\mu)]+ a_1^{(n-1)}} + [1+ic_{n+2}(\mu)]  \right] \gamma_{n-1}, \quad n\ge 1,
	\]
	with $\gamma_{0}=  [\overline{G_{2}(0)}]^{-1} [1+ic_{1}(\mu)][1+ic_{2}(\mu)]$, as well as 

	\[
	a_1^{(n)} = -\frac{R_{n+2}(z_1;\mu)z_{2}R_{n}(z_2;\mu) - R_{n+2}(z_2;\mu)z_{1}R_{n}(z_1;\mu)} {R_{n+1}(z_1;\mu)z_{2}R_{n}(z_2;\mu) - R_{n+1}(z_2;\mu)z_{1}R_{n}(z_1;\mu)}
	\]
	and
	\[
	a_2^{(n)} = \frac{R_{n+2}(z_1;\mu)R_{n+1}(z_2;\mu) - R_{n+2}(z_2;\mu)R_{n+1}(z_1;\mu)} {R_{n+1}(z_1;\mu)z_{2}R_{n}(z_2;\mu) - R_{n+1}(z_2;\mu)z_{1}R_{n}(z_1;\mu)}.
	\]
	
	Furthermore, the the  coefficients  $c_n(\nu)$ and  $g_{n}(\nu)$ corresponding to the measure $d \nu(z) =  z^{-1} G_{2}(z) \, d \mu(z)$ satisfy for $n \geq 1$,
\[
    c_n(\nu) = \frac{i}{2} \frac{\gamma_{n-1}}{\gamma_{n}} \left[[1-ic_{n+1}(\mu)] \frac{[1-ic_{n+2}(\mu)]+ a_1^{(n)}} {[1-ic_{n+1}(\mu)]+ a_1^{(n-1)}} - [1+ic_{n+2}(\mu)]  \right] 
\]
and 
\[
    g_n(\nu) =1- \frac{\gamma_{n-1}}{\gamma_{n}}\frac{\xi_{n+2}(\mu) + a_{1}^{(n)}\xi_{n+1}(\mu) + a_{2}^{(n)}\xi_{n}(\mu)} 
    {\xi_{n+1}(\mu) + a_{1}^{(n-1)}\xi_{n}(\mu) + a_{2}^{(n-1)}\xi_{n-1}(\mu)}.
\]
\end{coro}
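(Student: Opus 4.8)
The plan is to obtain the corollary as the $m=1$ specialization of Theorem~\ref{Thm-Rn-Rn-Conection}, supplemented by an explicit solution of the $2\times 2$ linear system determining the connection coefficients. Since $G_2$ is self-reciprocal of exact degree $2$ with simple zeros $z_1\neq z_2$, and the admissible set \eqref{example1} satisfies \eqref{assumption1}, Theorem~\ref{Thm-Rn-Rn-Conection} applies with $2m=2$. First I would record that, with $N=n+2m=n+2$, every sum $\sum_{j=1}^{m}$ in that theorem collapses to its single term $j=1$: in the numerator $\xi_{N-2j+1}=\xi_{n+1}$ and $\xi_{N-2j}=\xi_{n}$, while in the denominator $\xi_{N-2j}=\xi_{n}$ and $\xi_{N-2j-1}=\xi_{n-1}$. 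Writing $\gamma_n:=\gamma_n^{(1)}$ and $a_k^{(n)}:=a_k^{(n,1)}$, the stated formulas for $\gamma_n/\gamma_{n-1}$, for $\gamma_0$, for $c_n(\nu)$, and for $g_n(\nu)$ are then exactly Theorem~\ref{Thm-Rn-Rn-Conection} read off under these substitutions, with $c_{N-1}=c_{n+1}$ and $c_N=c_{n+2}$.

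The only genuinely new content is the closed form of $a_1^{(n)}$ and $a_2^{(n)}$. Here I would exploit that the left-hand side of the connection formula \eqref{Eq-Connection-Formula-Rn-1} is a multiple of $G_2$ and hence vanishes at $z_1$ and $z_2$. Evaluating \eqref{Eq-Connection-Formula-Rn-1} at $z=z_1$ and $z=z_2$ annihilates the left-hand side and produces precisely the $2\times 2$ system displayed just before the corollary, with unknowns $a_1^{(n)},a_2^{(n)}$ and right-hand column vector $\big(-R_{n+2}(z_1;\mu),\,-R_{n+2}(z_2;\mu)\big)$. Solving by Cramer's rule then yields the two quotients stated, whose common denominator is the determinant $R_{n+1}(z_1;\mu)z_2R_n(z_2;\mu)-R_{n+1}(z_2;\mu)z_1R_n(z_1;\mu)$; a quick sign check confirms the numerators match the asserted expressions for $a_1^{(n)}$ and $a_2^{(n)}$.

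It remains to confirm that this determinant is nonzero, so that the system is uniquely solvable and the quotients are well defined. Since $w=1$ has $|w|=1$, Proposition~\ref{proponT} gives the linear independence of $Q_0(\cdot,1),\dots,Q_{2m}(\cdot,1)$, whence Lemma~\ref{lemma25} yields $\Delta_0(1)\neq0$ for the minor in \eqref{defDelta}. For $m=1$ and the set \eqref{example1}, $\Delta_0(1)$ equals $K_{n+1}(z_1,1;\mu)z_2K_n(z_2,1;\mu)-K_{n+1}(z_2,1;\mu)z_1K_n(z_1,1;\mu)$, which differs from the Cramer determinant above only by the positive normalizing factors $\xi_{n+1}(\mu)\xi_n(\mu)$ of \eqref{Eq-Normalized-CDKernel}; hence the denominator does not vanish. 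I do not anticipate a real obstacle, as the corollary is a pure specialization; the only points demanding care are the consistent tracking of the index shift $N=n+2$ when the sums collapse, and the identification of the Cramer determinant with $\Delta_0(1)$ up to the $\xi$-normalization, which lets the nonvanishing be imported from the earlier results rather than reproved.
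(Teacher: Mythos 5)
Your proposal is correct and follows essentially the same route as the paper: the corollary is obtained there as the $m=1$ specialization of Theorem~\ref{Thm-Rn-Rn-Conection}, with $a_1^{(n)},a_2^{(n)}$ read off (via Cramer's rule) from the very $2\times 2$ system displayed before the corollary, exactly as you do. Your explicit verification that the Cramer determinant equals $\xi_{n+1}(\mu)\xi_n(\mu)\,\Delta_0(1)\neq 0$, imported from Proposition~\ref{proponT} and Lemma~\ref{lemma25}, is a careful articulation of what the paper handles implicitly in the discussion preceding \eqref{ConnectionRn}, not a different argument.
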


 \section{Examples}\label{sec:examples}
 
 \begin{example}
 	We start with the simplest case of the normalized Lebesgue measure $\mu$ on $\T$, given by \eqref{normalizedLebesgue}, illustrating the discussion in Example~\ref{exampleLeb} for $m=1$. In this setting, $G_{2}(\zeta)=(\zeta-z_1)(1-\overline{z_1} \zeta)$,  $z_1\in \C\setminus\{0\}$, and for the admissible set $\mathcal P$ from \eqref{example1}, $p_0=p_1=1$, and $p_2(z)=z$. Hence,
 	the matrix in \eqref{defQmatrix} is
 	is
 	$$
 	\bm Q(z,w) =\begin{pmatrix}
 	K_{n+2}(z,w;\mu) & K_{n+1}(z,w;\mu) &   z K_{n}(z,w;\mu) \\
 	K_{n+2}(z_1,w;\mu) & K_{n+1}(z_1,w;\mu) &   z_1 K_{n}(z_1,w;\mu) \\
 	K_{n+2}(z_2,w;\mu) & K_{n+1}(z_2,w;\mu) &   z_2 K_{n}(z_2,w;\mu)
 	\end{pmatrix},\quad z_2=1/\overline{z_1}.
 	$$
 	But for the normalized Lebesgue measure $\mu$, the CD kernel satisfies the identities
$$
%K_n(z,w;\mu)=\frac{1-\overline{w}^{n+1} z^{n+1}}{1-\overline{w} z}, \quad 
K_{n+1}(z,w)-K_n(z,w)=\overline{w}^{n+1} z^{n+1},  
\quad 
\overline{w} zK_n(z, w)=K_{n+1}(z,w)-1,
$$
which can be used to simplify the expression for $\bm Q(z,w)$ for $w\neq 0$:
\begin{align*}
\det \bm Q(z) = & \frac{1}{\overline{w}}\,\det \begin{pmatrix}
K_{n+2}(z,w;\mu) & K_{n+1}(z,w;\mu) &    K_{n+1}(z,w;\mu)-1 \\
K_{n+2}(z_1,w;\mu) & K_{n+1}(z_1,w;\mu) &     K_{n+1}(z_1,w;\mu)-1 \\
K_{n+2}(z_2,w;\mu) & K_{n+1}(z_2,w;\mu) &     K_{n+1}(z_2,w;\mu)-1
\end{pmatrix}\\
= & \frac{1}{\overline{w}}\,\det \begin{pmatrix}
\overline{w}^{n+2} z^{n+2} & K_{n+1}(z,w;\mu) &     -1 \\
\overline{w}^{n+2} z_1^{n+2} & K_{n+1}(z_1,w;\mu) &     -1 \\
\overline{w}^{n+2} z_2^{n+2} & K_{n+1}(z_2,w;\mu) &   -1
\end{pmatrix}
\\
= & \overline{w}^{n+1}\,\det \begin{pmatrix}
 z^{n+2} & K_{n+1}(z,w;\mu) &    -1 \\
  z_1^{n+2}-z^{n+2} & K_{n+1}(z_1,w;\mu)-K_{n+1}(z,w;\mu) &  0 \\
 z_2^{n+2}-z^{n+2} & K_{n+1}(z_2,w;\mu)-K_{n+1}(z,w;\mu) &   0
\end{pmatrix}\\
= & -\overline{w}^{n+1}\,\det \begin{pmatrix}
z_1^{n+2}-z^{n+2} & K_{n+1}(z_1,w;\mu)-K_{n+1}(z,w;\mu)  \\
z_2^{n+2}-z^{n+2} & K_{n+1}(z_2,w;\mu)-K_{n+1}(z,w;\mu)  
\end{pmatrix}.
%\\
%= & \overline{w}^{n+1}\, \left( (b^{n+2}-z^{n+2})(K_{n+1}(a,w)-K_{n+1}(z,w))\right.
%\\
%& \left.- (a^{n+2}-z^{n+2})(K_{n+1}(b,w)-K_{n+1}(z,w)) \right),
\end{align*}
Recall that
$$
K_{n}(z,w;\mu)=\sum_{j=0}^n \overline{w}^j z^j = 1+ \overline{w} z +\mathcal O(w^2), \quad w\to 0,
$$
so that, as $w\to 0$,
\begin{align*}
\det \bm Q(z) = & \overline{w}^{n+2}\, \left((z_2^{n+2}-z^{n+2}) (z_1-z) - (z_1^{n+2}-z^{n+2}) (z_2-z)+\mathcal O(w)\right).
\end{align*}
Using the arguments from Example~\ref{exampleLeb}, for 
$$
 d\nu(\zeta)=\frac{1}{2\pi}|\zeta-z_1|^2 |d\zeta|, \quad \zeta\in \T,\quad z_1\in \C\setminus\{0\},
 $$ 
%% $$,
we have
\begin{align*}
K_n(z,0;\nu)=& \overline{\varphi_{n}^{\ast}(0; \nu)}\,\varphi_{n}^{\ast}(z; \nu) \\
=& \const \frac{ (z^{n+2}-z_1^{n+2}) (z-z_2)-(z^{n+2}-z_2^{n+2}) (z-z_1)}{(z-z_1)( z-z_2)},   \quad z_2=\frac{1}{\overline{z_1}}\neq 1.
\end{align*}
%For $a=1$ it requires further normalization.  
\end{example}
 
\vspace{2ex}
 
\begin{example}
   We consider the positive measure on the unit circle given by
\begin{equation} \label{Eq-qMeasure}
    d \mu^{(b)}(\zeta) =  \rho^{(b)}\, \frac{|(q\zeta;\,q)_{\infty}|^2} {|(q^{b}\zeta;\,q)_{\infty}|^2}\frac{1}{2 \pi i \, \zeta} \,d\zeta, 
\end{equation}
where $0 < q < 1$ and $\Re(b) > 0$.  Recall that
$$
(a;q)_n=\prod_{j=0}^{n-1}(1-a q^j).
$$
The choice 
\[
     \rho^{(b)} = \frac{(1-q^{\overline{b}})}{\,_2\phi_1(q,\, q^{-b+1}; q^{\overline{b}+1};q,q^{b})}\, \frac{(q;\,q)_{\infty}\,(q^{b+\overline{b}};\,q)_{\infty}} {(q^{b};\,q)_{\infty}\,(q^{\overline{b}};\,q)_{\infty}}.
\]
makes $\mu^{(b)}$ a probability measure (see \cite{SRanga-2016}). It is also known that the normalized CD-kernels, defined as in \eqref{Eq-Normalized-CDKernel}, are
\begin{equation} \label{Eq-Rk-ExplicitForm}
\begin{array}{ll}
R_{n}(z; \mu^{(b)}) & \dsp  =  \frac{(q^{\overline{b}};\,q)_n}{(q^{\lambda}\cos(\eta_q);\,q)_n}\,
\,_2\phi_1\Big(\begin{array}{c}
q^{-n},\, q^{b} \\
q^{-\overline{b}-n+1}
\end{array}\!\!;\, q,\, q^{-\overline{b}+1}z\Big), \quad  n \geq 0,
\end{array} 
\end{equation} 
where $\lambda=\Re(b)$, $\eta=-\Im(b)$, and $\eta_{q} = \eta\, \ln(q)$. Furthermore, the $(c_n,g_n)$ parametrization of $\mu^{(b)}$ (see Section~\ref{Sec-ModifiedCDKernel}) is given by

\begin{align*} %\label{TTRR-q-Rn-coeffs}
c_{k}(\mu^{(b)}) &= \frac{q^{\lambda+k-1}\sin(\eta_q)}{1 - q^{\lambda+k-1}\cos(\eta_q)}, \\
%[3ex]
%   \dsp d_{k+1}(\mu^{(b)}) = \frac{(1-q^{k})\, (1-q^{2 \lambda+k-1})}{4(1 - q^{\lambda+k-1}\cos(\eta_q))\,(1 - q^{\lambda+k}\cos(\eta_q))}, 
g_{k}(\mu^{(b)}) &=\frac{1}{2}\frac{1-q^{\overline{b}+k-1}}{1-q^{\lambda+k-1}\cos(\eta_{q})} \frac{\,_2\phi_1(q^{k-1},\, q^{-b+1}; q^{\overline{b}+k-1};q,q^{b})}{\,_2\phi_1(q^{k},\, q^{-b+1}; q^{\overline{b}+k};q,q^{b})}, 
\end{align*}
with $ k \geq 1$, see \cite{SRanga-2016}.

If we consider the normalized kernel polynomials $R_{n}(.;\nu^{(b,z_1)})$ with respect to the measure  
\[
    d \nu^{(b,z_1)}(\zeta) = \frac{1}{\zeta} G_2^{(z_1)} (\zeta)\, d \mu^{(b)}(\zeta), \quad \zeta\in \T,
\]
where $G_2^{(z_1)} (\zeta) =-z_{2}^{-1}(\zeta - z_1) (\zeta - z_{2})$, with $|z_1| > 1$ and $z_2 = 1/\overline{z}_{1}$, then all formulas from Corollary~\ref{Coro-tttt} apply. 

The choice  
$$
z_1=q^{-b}, \quad  z_2 = q^{\overline{b}},
$$
is particularly interesting, since the measure $\nu^{(b,q^{-b})}$  coincides, up to a multiplicative constant, with $  \mu^{(b+1)}$, which leads to the following connection formula:
\begin{theo}\label{thm:example2}
Let $R_{n}(z; \mu^{(b)})$ be given by \eqref{Eq-Rk-ExplicitForm}. Then, for $n \geq 0$,
 \begin{equation*} \label{Eq-Connection-Formula-Rn-q}
\begin{array}{l}
\dsp \gamma_{n}^{(b)}  G_{2}^{(q^{-b})}(z) R_{n}(z;\mu^{(b+1)}) =  R_{n+2}(z; \mu^{(b)}) +  a_{1}^{(n)}\,   R_{n+1}(z; \mu^{(b)})  +  a_{2}^{(n)} z R_{n}(z; \mu^{(b)}), 
\end{array}
\end{equation*}
where
\begin{align*} 
 a_{1}^{(n)}& =   \frac{(1-q^{2\lambda+n+1})}{[1-q^{\lambda+n+1}\cos(\eta_q)]} \frac{2iq^{\lambda}\sin(\eta_{q})}{q^{b}(1-q^{\overline{b}})} , \\   a_{2}^{(n)} & = \frac{(1-q^{2\lambda+n})(1-q^{2\lambda+n+1})}{[1-q^{\lambda+n}\cos(\eta_q)][1-q^{\lambda+n+1}\cos(\eta_q)]} \frac{(1 - q^{-b})}{(1-q^{\overline{b}})},
\end{align*}
 and
 \[
 \gamma_{n}^{(b)} = -q^{\overline{b}}\frac{1-q^{b+n+1}}{1-q^{\lambda+n+1}\cos(\eta_{q})} \frac{1-q^{b}}{1-q^{\lambda}\cos(\eta_{q})}.
 \]
\end{theo}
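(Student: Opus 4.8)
The plan is to reduce the statement to the general connection formula of Corollary~\ref{Coro-tttt} and then to compute the three coefficients explicitly by evaluating the kernels at the two nodes with $q$-series summations. First I would verify that $\nu^{(b,q^{-b})}$ is, up to a positive multiplicative constant, the measure $\mu^{(b+1)}$. On $\T$ one factors $G_2^{(q^{-b})}(\zeta)=-z_2^{-1}(\zeta-q^{-b})(\zeta-q^{\overline{b}})$ and, using $\overline{\zeta}=1/\zeta$, rewrites $\zeta^{-1}G_2^{(q^{-b})}(\zeta)=q^{-b-\overline{b}}\,|1-q^{b}\zeta|^2=q^{-2\lambda}\,|1-q^{b}\zeta|^2$. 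On the other hand, the identity $(q^{b}\zeta;q)_\infty=(1-q^{b}\zeta)(q^{b+1}\zeta;q)_\infty$ shows that the weights in \eqref{Eq-qMeasure} satisfy $w^{(b+1)}(\zeta)/w^{(b)}(\zeta)=(\rho^{(b+1)}/\rho^{(b)})\,|1-q^{b}\zeta|^2$. Hence $d\nu^{(b,q^{-b})}$ and $d\mu^{(b+1)}$ differ only by a positive constant. Since the normalized kernels $R_n$ in \eqref{Eq-Normalized-CDKernel} are invariant under scaling of the measure by a positive constant (here $\xi_0$ and $K_n(\cdot,1;\cdot)$ scale reciprocally, while the $g_n$, and hence the $\xi_n$-ratios, depend only on the Verblunsky coefficients), one has $R_n(\cdot;\nu^{(b,q^{-b})})=R_n(\cdot;\mu^{(b+1)})$, so all formulas of Corollary~\ref{Coro-tttt} apply verbatim with $z_1=q^{-b}$ and $z_2=q^{\overline{b}}=1/\overline{z_1}$.

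Next I would evaluate the kernels $R_k(\cdot;\mu^{(b)})$ at the two nodes using the closed form \eqref{Eq-Rk-ExplicitForm}. At $z_2=q^{\overline{b}}$ the argument of the basic hypergeometric function is $q^{-\overline{b}+1}z_2=q$, so the terminating ${}_2\phi_1$ is summed by the $q$-Chu--Vandermonde formula ${}_2\phi_1(q^{-k},a;c;q,q)=a^k(c/a;q)_k/(c;q)_k$. At $z_1=q^{-b}$ the argument is $q^{1-b-\overline{b}}$; here a Heine transformation collapses one upper parameter against the lower one, reducing the series to a ${}_1\phi_0$ summable by the $q$-binomial theorem. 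This produces $R_{n}(z_1;\mu^{(b)})$ and $R_{n}(z_2;\mu^{(b)})$, and likewise for the indices $n+1$ and $n+2$, as explicit ratios of $q$-Pochhammer symbols.

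Then I would substitute these evaluations into the Cramer-rule expressions for $a_1^{(n)}$ and $a_2^{(n)}$ in Corollary~\ref{Coro-tttt}. After cancelling the common $q$-Pochhammer prefactors carried by the four kernels, the $2\times2$ numerator and denominator determinants telescope, via the shift $(q^c;q)_{k+1}=(1-q^c)(q^{c+1};q)_k$, into the stated rational functions of $q^n$; the conversion of $\cos(\eta_q)$-terms into $q^{b}$-terms is effected by the bookkeeping identity $q^{\lambda+k}e^{-i\eta_q}=q^{b+k}$, which follows from $b=\lambda-i\eta$. For $\gamma_n^{(b)}$ I would check that the claimed closed form satisfies the $\gamma$-recurrence of Corollary~\ref{Coro-tttt}: the initial value follows from $\gamma_0=[\overline{G_2(0)}]^{-1}(1+ic_1)(1+ic_2)$ together with $G_2^{(q^{-b})}(0)=-q^{-b}$ and $1+ic_k(\mu^{(b)})=(1-q^{b+k-1})/(1-q^{\lambda+k-1}\cos(\eta_q))$, and the inductive step is then an algebraic identity in the already-determined $a_1^{(n)}$ and $c_k(\mu^{(b)})$. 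Alternatively, $\gamma_n^{(b)}$ may be recovered from the reproducing identity $\gamma_n\xi_n(\nu)=\xi_{n+2}(\mu)+a_1^{(n)}\xi_{n+1}(\mu)+a_2^{(n)}\xi_n(\mu)$ of Theorem~\ref{Thm-Rn-Rn-Conection}, using the $\xi_n(\mu^{(b)})$ produced from $g_n(\mu^{(b)})$ through \eqref{defXi}.

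I expect the \textbf{main obstacle} to be the node evaluation at $z_1=q^{-b}$, where the argument $q^{1-b-\overline{b}}$ is not one of the ``nice'' summation points: one must apply Heine's transformation so that two parameters coincide before summing, and then carry the resulting infinite products through the determinant cancellations. The subsequent simplification of the two determinants to the compact forms for $a_1^{(n)}$ and $a_2^{(n)}$ is elementary but delicate, requiring careful tracking of the $q$-Pochhammer shifts that generate the factors $(1-q^{2\lambda+n})$, $(1-q^{2\lambda+n+1})$, and $(1-q^{\lambda+n}\cos(\eta_q))$, as well as consistent use of the real/complex bookkeeping $b=\lambda-i\eta$ that turns the $\cos(\eta_q)$-expressions into powers of $q^{b}$.
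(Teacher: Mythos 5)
Your proposal is correct and follows essentially the same route as the paper: identify $\nu^{(b,q^{-b})}$ with a positive multiple of $\mu^{(b+1)}$, evaluate $R_k(\cdot;\mu^{(b)})$ at the nodes $q^{-b}$ and $q^{\overline{b}}$ by $q$-summation, insert these values into the formulas of Corollary~\ref{Coro-tttt} for $a_1^{(n)}$ and $a_2^{(n)}$, and obtain $\gamma_n^{(b)}$ from the $\gamma$-recurrence together with the initial value $\gamma_0=[\overline{G_2(0)}]^{-1}(1+ic_1)(1+ic_2)$. Two remarks: your self-declared ``main obstacle'' is illusory, since for the ${}_2\phi_1$ in \eqref{Eq-Rk-ExplicitForm} with upper parameters $q^{-n}, q^{b}$ and lower parameter $q^{-\overline{b}-n+1}$ the point $q^{1-b-\overline{b}}$ is exactly the $q$-Gauss argument (lower parameter divided by the product of the upper ones), so Heine's $q$-analogue of Gauss's summation --- which is precisely what the paper invokes --- sums the series at $z_1=q^{-b}$ directly, and your detour via a Heine transformation plus the $q$-binomial theorem merely reproves that summation formula; on the plus side, your explicit verification that $d\nu^{(b,q^{-b})}$ is a constant multiple of $d\mu^{(b+1)}$ and that $R_n$ is invariant under positive rescaling of the measure supplies a step the paper leaves implicit.
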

\begin{proof}
%these are perhaps the only two values (other that $z=0$) for which we can find a compact expression for $R_n(z_j;\mu^{(b,z_1)})$.  
From Heine's $q$-analogue of Gauss summation formula (see, for example, \cite[p.\,14]{GasRah-book}) we get 
\[
     R_{n}(q^{-b}; \mu^{(b,q^{-b})}) = \frac{(q^{2\lambda};q)_{n}}{(q^{\lambda}\cos(\eta_{q}); q)_{n}} q^{-nb}, \quad   R_{n}(q^{\overline{b}}; \mu^{(b,q^{-b})}) = \frac{(q^{2\lambda};q)_{n}}{(q^{\lambda}\cos(\eta_{q}); q)_{n}}, \quad n\ge 1,
\]
 and it remains to apply the formulas from Corollary~\ref{Coro-tttt} to arrive at 
\[
     \begin{array}l
        \dsp a_{1}^{(n)} = - \frac{(1-q^{2\lambda+n+1})}{[1-q^{\lambda+n+1}\cos(\eta_q)]} \frac{(1- q^{\overline{b}-b})}{(1-q^{\overline{b}})} = \frac{(1-q^{2\lambda+n+1})}{[1-q^{\lambda+n+1}\cos(\eta_q)]} \frac{2iq^{\lambda}\sin(\eta_{q})}{q^{b}(1-q^{\overline{b}})} , \\[3ex]
        \dsp a_{2}^{(n)} = \frac{(1-q^{2\lambda+n})(1-q^{2\lambda+n+1})}{[1-q^{\lambda+n}\cos(\eta_q)][1-q^{\lambda+n+1}\cos(\eta_q)]} \frac{(1 - q^{-b})}{(1-q^{\overline{b}})},
     \end{array}
\]
as stated.
%
%as well as  $\gamma_{n}^{(b)} = \gamma_{n}^{(b,q^{-b})}$ we have from \eqref{Eq-qExample-gamma} 
%%
%\[
%  \gamma_{n}^{(b)} = \frac{1}{2} \left[\big(1-ic_{n+1}(\mu^{(b)})\big) \frac{\big(1-ic_{n+2}(\mu^{(b)})\big)+ a_1^{(n,b)}}{\big(1-ic_{n+1}(\mu^{(b)})\big)+ a_1^{(n-1,b)}} + \big(1+ic_{n+2}(\mu^{(b)})\big)  \right] \gamma_{n-1}^{(b)} , \quad n \geq 1.
%\]
%%
Furthermore, observing that 
\[
     1+ic_{n}(\mu^{(b)}) = \frac{(1-q^{b+n-1})}{[1-q^{\lambda+n-1}\cos(\eta_{q})]}, \quad   1-ic_{n}(\mu^{(b)}) = \frac{(1-q^{\overline{b}+n-1})}{[1-q^{\lambda+n-1}\cos(\eta_{q})]} 
\]
we find that 
\[
    \big(1 - ic_{n+2}(\mu^{(b)})\big) + a_1^{(n)} = q^{\overline{b}-b}\frac{(1-q^{b})}{(1-q^{\overline{b}})}\frac{(1-q^{b+n+1})}{[1-q^{\lambda+n+1}\cos(\eta_{q})]} .
\]
Using this identity in the Corollary~\ref{Coro-tttt} we get 
\[
        \gamma_{n}^{(b)} = \frac{1-q^{b+n+1}}{1-q^{b+n}} \frac{1-q^{\lambda+n}\cos(\eta_{q})}{1-q^{\lambda+n+1}\cos(\eta_{q})} \, \gamma_{n-1}^{(b)}, \quad n \geq 1.
\]
Taking into account the initial condition $\gamma_{0}^{(b)} = [\overline{G_{2}^{(q^{-b})}(0)}]^{-1} \overline{R_{2}(0; \mu^{(b)})}$, we finally conclude that
\[
      \gamma_{n}^{(b)} = -q^{\overline{b}}\frac{1-q^{b+n+1}}{1-q^{\lambda+n+1}\cos(\eta_{q})} \frac{1-q^{b}}{1-q^{\lambda}\cos(\eta_{q})}, \quad n \geq 0.
\]
 \end{proof}

\end{example}

\vspace{2ex}
 
\begin{example}\label{example43}
	Our next example is the one-parametric family of Geronimus polynomials, defined by the constant Verblunsky coefficients,
	$$
	\alpha_n=\alpha, \quad n\geq 0, \quad 0<|\alpha| < 1.
	$$
	The corresponding unit orthogonality measure $ \mu$ is (see \cite{Geronimus-AMSTransl-1977} and \cite[p.\,83]{Simon-Book-p1}) 
	$$
	d \mu \left(e^{i\theta}\right)= \frac{\sqrt{\cos^2(\theta_{|\alpha|}/2)-\cos^2(\theta/2)}}{2 \pi |1+\alpha|\,\sin((\theta-\vartheta_{\alpha})/2)} \chi_{[\theta_{|\alpha|},2\pi-\theta_{|\alpha|}]}(\theta) + \Delta_\alpha \delta_{\vartheta_{\alpha}},
	$$
	where $\chi_{[\theta_{|\alpha|},2\pi-\theta_{|\alpha|}]}$ is the characteristic function of the interval $[\theta_{|\alpha|},2\pi-\theta_{|\alpha|}]$, with $\theta_{|\alpha|} = 2\arcsin(|\alpha|)\in [0,\pi)$; $\delta_{\vartheta_{\alpha}}$ is the Dirac delta at the point $\vartheta_{\alpha}\in (-\pi,\pi)$ defined by
	$$
	w_{\alpha} := e^{i\vartheta_{\alpha}} =  \frac{1+\overline{\alpha}}{1+\alpha},
	$$
	and the mass $\Delta_\alpha$ is given by
	$$
	\Delta_\alpha =  \frac{2}{|1+\alpha|^2} \max \{ \Re(\alpha) + |\alpha|^2, 0\}.
	$$

Let us consider the probability measure $\widetilde \mu$ obtained by rotating ${\mu} $ by the angle $-\vartheta_{\alpha}$, so that $w_\alpha \mapsto 1$.  That is, 
$$
d \widetilde\mu \left(e^{i\theta}\right)= \frac{\sqrt{\cos^2(\theta_{|\alpha|}/2)-\cos^2((\theta+\vartheta_{\alpha})/2)}}{2 \pi |1+\alpha|\,\sin(\theta/2)} \chi_{[\theta_{|\alpha|}-\vartheta_{\alpha}, 2\pi-\theta_{|\alpha|}-\vartheta_{\alpha} ]}(\theta) + \Delta_\alpha \delta_{0}.
$$
The Verblunsky coefficients of $\mu$ are 
\[
     \widetilde \alpha_{n}   = w_{\alpha}^{n+1}\, \alpha,  \quad n \geq 0. 
\]
Recall that the normalized CD Kernels \eqref{Eq-Normalized-CDKernel},
$$
R_n(z;\widetilde\mu) := \xi_n(\widetilde\mu)\, K_{n}(z,1;\widetilde\mu) , \quad n \geq 0,
$$
satisfy the three term recurrence relation \eqref{Eq-TTRR-Rn1}. From \cite{MFinkelshtein-SRanga-Veronese-2016} it follows that its coefficients \eqref{cSeq}--\eqref{def:CS1} are also constant, given by 
\begin{equation} \label{GeronimusC}
c_{n}(\widetilde\mu) = c^{(\alpha)}= -\frac{\Im(\alpha)}{1 + \Re(\alpha)},  \quad 
d_{n+1}(\widetilde \mu ) = d^{(\alpha)} = \big(1-g^{(\alpha)} \big) g^{(\alpha)}, \quad n\ge 1,
\end{equation}
with 
\begin{equation} \label{GeronimusG}
g_{n}(\widetilde \mu) = g^{(\alpha)} = \frac{1- |\alpha|^2}{2[1 + \Re(\alpha)]}, \quad n \geq 1. 
\end{equation}

From the theory of difference equations we find
\[
     R_{n}(z; \widetilde \mu ) = \frac{1}{2^{n+1}f_{2}^{(\alpha)}(z)} \big[ \big(f_1^{(\alpha)}(z) + f_2^{(\alpha)}(z)\big)^{n+1} - \big(f_1^{(\alpha)}(z) - f_2^{(\alpha)}(z)\big)^{n+1} \big],
\]
for $n \geq 0$, where
\[ 
   \begin{array} l
     f_{1}^{(\alpha)}(z) = (1+ic^{(\alpha)})z +  (1-ic^{(\alpha)}),  \\[1ex]
     f_{2}^{(\alpha)}(z) = \sqrt{(f_{1}^{(\alpha)}(z))^2 - 16 d^{(\alpha)}z} = (1+ic^{(\alpha)}) \sqrt{(z - z_{1}^{(\alpha)})(z - z_{2}^{(\alpha)})},
   \end{array}
\]
with $z_1^{(\alpha)} = e^{i(\theta_{|\alpha|}-\vartheta_{\alpha})}$ and $z_2^{(\alpha)} = e^{i(2\pi - \theta_{|\alpha|}-\vartheta_{\alpha})}$. 
Notice that  $z_1^{(\alpha)} \neq z_{2}^{(\alpha)}$ and      
\[
  R_{n}(z_{j}^{(\alpha)}; \widetilde\mu )  = \frac{n+1}{2^{n}} \big(f_{1}^{(\alpha)}(z_{j}^{(\alpha)})\big)^{n} = 2^{n} (n+1) (d^{(\alpha)})^{n/2} (z_{j}^{(\alpha)})^{n/2}, \quad j = 1,2, 
\]
for $n \geq 0$. 

The conjugate reciprocal polynomial 
\[
     G_{2}^{(\alpha)}(z) = - e^{i\vartheta_{\alpha}} (z - z_{1}^{(\alpha)}) (z - z_{2}^{(\alpha)})
\]
is such that $e^{-i\theta} G_2^{(\alpha)}(e^{i\theta})\geq 0$ for $\theta \in [\theta_{|\alpha|}-\vartheta_{\alpha}, 2\pi -\theta_{|\alpha|}-\vartheta_{\alpha} ]$ (although it is  negative outside of this interval). With the assumption $\Re(\alpha) + |\alpha|^2 \leq 0$ we  consider $\nu^{(\alpha)}$ given by
\[
       d \nu(\zeta) = \frac{1}{\zeta} G_{2}^{(\alpha)}(\zeta) d\widetilde  \mu(\zeta),
\]
which is again a positive measure on $\T$, and we can calculate the coefficients in the relation \eqref{Eq-Connection-Formula-Rn-1}.

It turns out  (see formulas in Corollary \ref{Coro-tttt}) that
\begin{align*} 
   a_1^{(n )}  &=   -\frac{R_{n+2}(z_1^{(\alpha)}; \widetilde \mu )z_{2}^{(\alpha)}R_{n}(z_2^{(\alpha)}; \widetilde \mu ) - R_{n+2}(z_2^{(\alpha)}; \widetilde \mu )z_{1}^{(\alpha)}R_{n}(z_1^{(\alpha)}; \widetilde \mu )}{R_{n+1}(z_1^{(\alpha)}; \widetilde \mu )z_{2}^{(\alpha)}R_{n}(z_2^{(\alpha)}; \widetilde \mu ) - R_{n+1}(z_2^{(\alpha)}; \widetilde \mu )z_{1}^{(\alpha)}R_{n}(z_1^{(\alpha)}; \widetilde \mu )} = 0
 \end{align*}
and
\begin{align*} 
   a_2^{(n )}   &=  \frac{R_{n+2}(z_1^{(\alpha)}; \widetilde \mu)R_{n+1}(z_2^{(\alpha)}; \widetilde \mu) - R_{n+2}(z_2^{(\alpha)}; \widetilde \mu)R_{n+1}(z_1^{(\alpha)}; \widetilde \mu)}{R_{n+1}(z_1^{(\alpha)}; \widetilde \mu)z_{2}^{(\alpha)}R_{n}(z_2^{(\alpha)}; \widetilde \mu) - R_{n+1}(z_2^{(\alpha)}; \widetilde \mu)z_{1}^{(\alpha)}R_{n}(z_1^{(\alpha)}); \widetilde \mu)} \\ 
   &=  -4\frac{n+3}{n+1}d^{(\alpha)}.
 \end{align*}
Thus,  \eqref{Eq-Connection-Formula-Rn-1} takes the form  
\begin{equation*} 
  \begin{array}{l}
    \dsp {\gamma_{n}^{(\alpha)}}  G_{2}^{(\alpha)}(z) R_{n}(z; \nu) =  R_{n+2}(z; \widetilde \mu) -   4\frac{n+3}{n+1}d^{(\alpha)}  z R_{n}(z; \widetilde \mu), 
  \end{array}
\end{equation*}
with $d^{(\alpha)}$ given by \eqref{GeronimusC}--\eqref{GeronimusG}.

We can apply Corollary \ref{Coro-tttt} in order to find  the multiplication constant $\gamma_{n}^{(\alpha)}$ and  the coefficients  $c_n(\nu)$,  $g_n(\nu)$ that appear in the three term recurrence \eqref{Eq-TTRR-Rn1} for the normalized CD kernels $R_n(z; \nu)$: now % we have from Corollary \ref{Coro-tttt} that
\[
      \gamma_{n-1}^{(\alpha)}= \frac{|1+\alpha|^2}{[1 + \Re(\alpha)]^2}, \quad c_{n}(\nu) = -\frac{\Im(\alpha)}{1 + \Re(\alpha)},
\]  
for $n \geq 1$, and 
\[
      g_{n}(\nu) = 1-\frac{n}{n+1} \frac{(n+1)(1-g^{(\alpha)}) - 4(n+3)}{n(1-g^{(\alpha)}) - 4 (n+2)} (1-g^{(\alpha)}), \quad n \geq 1.    
\]
\end{example}

\vspace{2ex}

\begin{example}\label{example44} 
As our last example we consider the probability measure on the unit circle given by $d \mu(e^{i\theta}) = w^{(b)}(\theta) d\theta$, where 
\[
     w^{(b)}(\theta) = \frac{2^{b+\overline{b}}|\Gamma(b+1)|^2 }{2\pi\, \Gamma(b+\overline{b}+1)} e^{(\pi-\theta)\Im(b)} [\sin^{2}(\theta/2)]^{\Re(b)}.
\]
Here, $b = \lambda+ i\eta$ and $\lambda > -1/2$. From \cite{SR-PAMS2010} we know that the associated monic orthogonal polynomials $\Phi_{n}(z; \mu)$ and the normalized CD kernels $R_{n}(z;\mu) =  \xi_{n}(\mu) K_{n}(z,1;\mu)$ are, respectively,  
\[
    \Phi_{n}(z; \mu) = \Phi_{n}^{(b)}(z) =  
           \frac{(2 \lambda+1)_{n}}{(b+1)_{n}}\, _2F_1(-n,b+1;\,b+\overline{b}+1;\,1-z),   \quad   n \geq 0.
\]
and %the associated normalized CD kernels $R_{n}(z;\mu) =  \xi_{n}(\mu) K_{n}(z,1;\mu)$  are  
\begin{equation} \label{Eq-Rk41-ExplicitForm}
   R_{n}(z; \mu) = R_{n}^{(b)}(z)   = \frac{(2\lambda+2)_n}{(\lambda+1)_n} \, _2F_1(-n,b+1;\,b+\bar{b}+2;\,1-z),  \quad n \geq 0.
\end{equation}
Moreover, the $(c_n,g_n)$ parametrization of $\mu$ (see Section~\ref{Sec-ModifiedCDKernel}) is given by
\[
c_{n}(\mu) = c_{n}^{(b)} = \frac{\eta}{\lambda+n}, \quad  
g_{n}(\mu) =g_{n}^{(b)} = \frac{1}{2} \frac{2\lambda+n}{\lambda+n}, 
\]
with $n \geq 1$. 

Now, if we consider the normalized CD kernels $R_{n}(.;\nu)$ with respect to the measure 
\[
    d \nu(\zeta) = \frac{1}{\zeta} G_2^{(z_1)} (\zeta)\, d \mu(\zeta), \quad \zeta\in \T,
\]
where $G_2^{(z_1)} (z) =-z_{2}^{-1}(\zeta - z_1) (\zeta - z_{2})$, with $|z_1| > 1$ and $z_2 = 1/\overline{z}_{1}$, then \eqref{Eq-Connection-Formula-Rn-1} together with the formulas in Corollary~\ref{Coro-tttt} hold. 

However, if $|z_1| = 1$, so that $z_1 = z_2$,   then  in Corollary~\ref{Coro-tttt} the expressions for the evaluations of $a_1^{(n)}$ and $a_{2}^{(n)}$ have to be replaced by (see Remark~\ref{Rmk-1.1}) expressions involving also the derivatives. 

Let us consider $z_1 = z_2 = 1$.  In this case, $d \nu(e^{i \theta})$ coincides, up to a multiplicative constant, with $w^{(b+1)}(\theta)d \theta$ and 
\begin{equation} \label{Eq-Rk42-ExplicitForm}
   R_{n}(z; \nu) = R_{n}^{(b+1)}(z),  \quad n \geq 1.
\end{equation}
Hence, if we write \eqref{Eq-Connection-Formula-Rn-1} in the form 
\[
\begin{array}{l}
\dsp \gamma_{n}^{(b)} (z-1)^2 R_{n}^{(b+1)}(z) =  R_{n+2}^{(b)}(z) +  a_{1}^{(n,b)}\,   R_{n+1}^{(b)}(z)  +  a_{2}^{(n,b)} z R_{n}^{(b)}(z),  \quad n \geq 0, 
\end{array}
\]
we obtain 
\[ 
  \begin{array}{lll}
    a_{1}^{(n,b)} R_{n+1}^{(b)}(1)&+ \ a_{2}^{(n,b)} R_{n}^{(b)}(1) & = - R_{n+2}^{(b)}(1), \\[1ex]
    a_{1}^{(n,b)} R_{n+1}^{(b)\prime}(1)&+ \ a_{2}^{(n,b)} [R_{n}^{(b)}(1) + R_{n}^{(b)\prime}(1)] & = - R_{n+2}^{(b)\prime}(1).
  \end{array}
\]
Since
\[
    R_{n}^{(b)}(1) = \frac{(2\lambda + 2)_{n}}{(\lambda + 1)_{n}}, \quad R_{n}^{(b)\prime}(1) = \frac{n}{2} \frac{(2\lambda + 2)_{n}}{(\lambda + 1)_{n}} \frac{b+1}{\lambda+1}, \quad n \geq 0,
\]
one easily finds
\[
 a_{1}^{(n,b)} =  \frac{b-\overline{b}}{\overline{b}+1} \, \frac{2\lambda+n+3}{\lambda+n+2} ,  \quad 
 a_{2}^{(n,b)} = - \frac{b+1}{\overline{b}+1}\,  \frac{(2\lambda+n+2)(2\lambda+n+3)}{(\lambda+n+1)(\lambda+n+2)} ,
\]
for $n \geq 0$. Moreover, since 
\[
    [1 - i c_{n+2}(\mu)] + a_{1}^{(n)} = [1 - i c_{n+2}^{(b)}] + a_{1}^{(n,b)}= \frac{b+1}{\overline{b}+1} \, \frac{b+n+2}{\lambda+n+2}, 
\]
it follows from  Corollary~\ref{Coro-tttt} that
\[
 \gamma_{n}^{(b)} = \frac{b+1}{\lambda+1}\, \frac{b+n+2}{\lambda+n+2}, \quad n \geq 0.
\]

 \end{example} 

\section*{Acknowledgements}

The research of first author (CFB) was partially supported by grant 305208/2015-2 of CNPq and research project 2014/22571-2 of FAPESP of Brazil.

The second author (AMF) was partially supported by the Spanish Government together with the European Regional Development Fund (ERDF) under grant \linebreak MTM2014-53963-P from MINECO, by Junta de Andaluc\'{\i}a (the Excellence Grant P11-FQM-7276 and the research group FQM-229), and by Campus de Excelencia Internacional del Mar (CEIMAR) of the University of Almer\'{\i}a.

The research of the third author (ASR) was partially supported by grants \linebreak 305073/2014-1 and 475502/2013-2 of CNPq and the research project 2016/13309-8 of FAPESP of Brazil. 

Part of this work was carried out during the visit of AMF to the Department of Applied Mathematics of  IBILCE, UNESP. He acknowledges the hospitality of the hosting department, as well as a the financial support of the Special Visiting Researcher Fellowship 401891/2013-5 of the Brazilian Mobility Program ``Science without borders''. 

The manuscript was completed while ASR and AMF were visiting the Shanghai Jiao Tong University (SJTU) in the fall of 2016, AMF as a Visiting Chair Professor. They both would like to thank the SJTU for providing them with excellent working environment. ASR would also like to thank Mikhail Tyaglov of Shanghai Jiao Tong university for the kind invitation.

%%%%%%%%%%%%%%%%%%%%%%%%%%%%%%%%%%%%%%%%%%%%%%%%%%%%%%%%%%%%%%%%%%%%%%%%%%%%%%

%\pagebreak


\begin{thebibliography}{99}

\bibitem{AMT1} G.~Ariznabarreta, M.~Ma\~nas, A.~Toledano, CMV biorthogonal Laurent polynomials: Christoffel formulas for Christoffel and Geronimus perturbations, preprint  arXiv:1610.02008.

\bibitem{AMT2} G.~Ariznabarreta, M.~Ma\~nas, A.~Toledano, CMV biorthogonal Laurent polynomials. II: Christoffel formulas for Christoffel and Geronimus perturbations, preprint arXiv:1611.03547.


\bibitem{BracMcCabPerezRanga-MCOM2016} C.F. Bracciali, J.H. McCabe, T.E. Per\'{e}z and A. Sri Ranga, A class of orthogonal functions given by a three term recurrence formula, {\em Math. Comp.},  {\bf 85} (2016), 1837-1859. 

\bibitem{BracRangaSwami-2016} C.F. Bracciali, A. Sri Ranga and A. Swaminathan, Para-orthogonal polynomials on the unit circle satisfying three term recurrence formulas, Appl. Numer. Math.,  {\bf 109} (2016), 19-40.

\bibitem{CantMoraVela-2002} M.J. Cantero, L. Moral and L. Vel\'{a}zquez,  Measures and para-orthogonal polynomials on the unit circle, {\em East J. Approx.},  {\bf 8} (2002), 447-464.


\bibitem{Castillo-Costa-Ranga-Veronese-JAT2014} K. Castillo, M.S. Costa, A. Sri Ranga and D.O. Veronese, A Favard type theorem for orthogonal polynomials on the unit circle from a three term recurrence formula, {\em J. Approx. Theory}, { 184} (2014), 146-162.

\bibitem{Chihara-Book} T.S. Chihara, {\em An Introduction to Orthogonal Polynomials},
Mathematics and its Applications Series (Gordon and Breach, New
York, 1978).

\bibitem{Costa-Felix-Ranga-JAT2013} M.S. Costa, H.M. Felix and A. Sri Ranga, Orthogonal polynomials on the unit circle and chain sequences, {\em J. Approx. Theory}, { 173} (2013), 14-32. 


\bibitem{CostaGodoyLamblemSRanga-PAMS2012} M.S. Costa, E. Godoy,  R.L. Lambl\'{e}m and A. Sri Ranga, Basic hypergeometric functions and orthogonal Laurent polynomials, {\em Proc. Amer. Math. Soc}, { 140} (2012), 2075-2089. 


\bibitem{CostaLamblemMcCabeSRanga-JMAA2015} M.S. Costa, R.L. Lambl\'{e}m, J.H. McCabe  and A. Sri Ranga, Para-orthogonal polynomials from constant Verblunsky coefficients, {\em J. Math. Anal. Appl.}, {426} (2015), 1040-1060. 


\bibitem{DaNjVA-2003} L. Daruis, O. Nj\aa stad and W. Van Assche, Para-orthogonal polynomials in frequency analysis, {\em Rocky Mountain J. Math.} {\bf 33} (2003), 629--645.  


\bibitem{DelsarteGenin-1986} P. Delsarte and Y. Genin, The split Levinson algorithm, {\em IEEE Trans. Acoust. Speech Signal Process}, {\bf 34} (1986),
470-478.

\bibitem{DelsarteGenin-1988}  P. Delsarte and Y. Genin,  The tridiagonal approach to Szeg\H{o}'s orthogonal polynomials, Toeplitz linear system, and related interpolation problems, {\em SIAM J. Math. Anal.}, {\bf 19} (1988), 718-735.

\bibitem{Dimitar-Ranga-MN2013} D.K. Dimitrov and A. Sri Ranga, Zeros of a family of hypergeometric para-orthogonal polynomials on the unit circle,  {\em Math. Nachr.}, { 286} (2013), 1778-1791.


\bibitem{ENZG1}  T. Erd\'elyi, P. Nevai, J. Zhang and J. Geronimo, A simple proof of ``Favard's theorem'' on the unit circle, {\em Atti Sem. Mat. Fis. Univ. Modena}, { 39}  (1991), 551--556.  Also in ``Trends in functional analysis and approximation theory'' (Acquafredda di Maratea, 1989), 41--46, Univ. Modena Reggio Emilia, Modena, 1991.

\bibitem{GasRah-book} G. Gasper and M. Rahman, {``Basic Hypergeometric Series''}, Cambridge Univ. Press, Cambridge, 1990.

\bibitem{Geronimus-AMSTransl-1977}  Ya.L. Geronimus, Orthogonal Polynomials,  English translation of the appendix to the Russian translation of Szeg\H{o}'s book \cite{Szego-Book}, in ``Two Papers on Special Functions'',  Amer. Math. Soc. Transl., Ser. 2, Vol. 108, pp. 37-130, American Mathematical Society , Providence, R.I., 1977.

\bibitem{Golinskii-2002} L. Golinskii, Quadrature formula and zeros of para-orthogonal polynomials on the unit circle, {\em  Acta Math. Hungar.}, {\bf 96} (2002), 169-186. 

\bibitem{Grenander-Szego-Book} U. Grenander and G. Szeg\H{o}, \emph{Toeplitz forms and their applications},  Univ. of California Press, Berkeley, 1958; reprinted by Chelsea, New York, 1984.

\bibitem{Ismail-Ruedemann-JAT1992} M.E.H. Ismail and R.W. Ruedemann, Relation between polynomials orthogonal on the unit circle with respect to different weights, {\em  J. Approx. Theory}, {\bf 71} (1992), 39-60.

\bibitem{JoNjTh-1989} W.B. Jones, O. Nj\aa stad and W.J. Thron, Moment theory, orthogonal polynomials, quadrature, and continued fractions associated with the unit circle, {\em Bull. Lond. Math. Soc.}, {\bf 21} (1989), 113-152.

\bibitem{Li-Marcellan-CATCF1999} Xin Li and F. Marcell\'{a}n,  {\em Representations of orthogonal polynomials for modified measures}, Communications in the Analytic Theory of Continued Fractions, Vol. VII, 1999. 

\bibitem{MFinkelshtein-SRanga-Veronese-2016} A. Mart\'{i}nez-Finkelshtein, A. Sri Ranga and D.O. Veronese, Extreme zeros in a sequence of para-orthogonal polynomials and bounds for the support of the measure,  Math. Comp., to appear (see arXiv:1505.07788).


\bibitem{Simon-Book-p1} B. Simon, \emph{Orthogonal Polynomials on the Unit Circle. Part 1. Classical Theory},  Amer. Math. Soc. Colloq. Publ., vol. { 54}, part 1, Amer. Math. Soc., Providence, RI,  2005.

\bibitem{Simon-Survey-2008} B. Simon, \emph{The Christoffel-Darboux kernel}, in ``Perspectives in PDE, Harmonic Analysis and Applications'', a volume in honor of V.G. Maz'ya's 70th birthday, {\em Proceedings of Symposia in Pure Mathematics}, 79 (2008), 295-335.

\bibitem{Simon-2011} B. Simon, {``Szeg\H{o}`s Theorem and Its Descendants: Spectral Theory for L$^2$ perturbations of Orthogonal Polynomials''}, Princeton Univ. Press, Princeton, 2011. 

\bibitem{SR-PAMS2010} A. Sri Ranga, Szeg\H{o} polynomials from hypergeometric functions, {\em Proc. Amer. Math. Soc.}, {\bf 138} (2010), 4259-4270.

\bibitem{SRanga-2016} A. Sri Ranga, Two families of orthogonal polynomials on the unit circle from basic hypergeometric functions, submitted (see arXiv:1611.08064). 


\bibitem{Szego-Book} G. Szeg\"{o}, {\em Orthogonal Polynomials}, 4th ed., Amer. Math. Soc. Colloq. Publ., vol. {\bf 23}, Amer. Math. Soc., Providence, RI, 1975.


\bibitem{Wong-2007} M.L. Wong, First and second kind paraorthogonal polynomials and their zeros, {\em  J. Approx. Theory}, {\bf 146} (2007), 282-293.


\bibitem{Zhedanov-JAT98} A. Zhedanov, On some classes of polynomials orthogonal on arcs of the unit circle connected with symmetric orthogonal polynomials on an interval, { J. Approx. Theory}, { 94} (1998), 73--106. 

\end{thebibliography}
\end{document}